\numberwithin{equation}{section}
\theoremstyle{definition}
\newtheorem{remark}{Remark}
\theoremstyle{plain}
\newtheorem{theorem}{Theorem}[section]
\newtheorem{proposition}[theorem]{Proposition}
\newtheorem{corollary}[theorem]{Corollary}
\newtheorem{lemma}[theorem]{Lemma}
\newcommand{\CC}{\mathbb{C}}
\newcommand{\HH}{\mathbb{H}}
\newcommand{\ZZ}{\mathbb{Z}}
\newcommand{\hh}{\mathfrak{h}}
\newcommand{\la}{\langle}
\newcommand{\ra}{\rangle}
\newcommand{\one}{\mathbf{1}}
\newcommand{\ttt}{\mathfrak{t}}
\newcommand{\bone}{b_1}
\newcommand{\btwo}{b_2}
\newcommand{\bthree}{b_3}
\newcommand{\bfour}{b_4}
\newcommand{\bfive}{b_5}
\newcommand{\bsix}{b_6}
\newcommand{\bseven}{b_7}
\begin{document}

\begin{abstract}
The rational Cherednik algebra $\HH$ is a certain algebra of differential-reflection operators attached to a complex reflection group $W$ and depending on a set of central parameters.  Each irreducible representation $S^\lambda$ of $W$ corresponds to a standard module $M(\lambda)$ for $\HH$.  This paper deals with the infinite family $G(r,1,n)$ of complex reflection groups; our goal is to study the standard modules using a commutative subalgebra $\ttt$ of $\HH$ discovered by Dunkl and Opdam.  In this case, the irreducible $W$-modules are indexed by certain sequences $\lambda$ of partitions.  We first show that $\ttt$ acts in an upper triangular fashion on each standard module $M(\lambda)$, with eigenvalues determined by the combinatorics of the set of standard tableaux on $\lambda$.  As a consequence, we construct a basis for $M(\lambda)$ consisting of orthogonal functions on $\CC^n$ with values in the representation $S^\lambda$.  For $G(1,1,n)$ with $\lambda=(n)$ these functions are the non-symmetric Jack polynomials.  We use intertwining operators to deduce a norm formula for our orthogonal functions and give an explicit combinatorial description of the lattice of submodules of $M(\lambda)$ in the case in which the orthogonal functions are all well-defined.  A consequence of our results is the construction of a number of interesting finite dimensional modules with intricate structure.  Finally, we show that for a certain choice of parameters there is a cyclic group of automorphisms of $\HH$ so that the rational Cherednik algebra for $G(r,p,n)$ is the fixed subalgebra.  Our results therefore descend to the rational Cherednik algebra for $G(r,p,n)$ by Clifford theory.
\end{abstract}

\title{Orthogonal functions generalizing Jack polynomials}

\author{Stephen Griffeth }

\address{Department of Mathematics \\
University of Minnesota \\
Minneapolis, MN 55455 \\}

\maketitle

\section{Introduction.}

The rational Cherednik algebra $\HH$ is an algebra attached to a complex reflection group $W$ and depending on a set of central parameters indexed by the conjugacy classes of reflections in $W$.  The representations of the rational Cherednik algebra are closely connected with those of the (finite) Hecke algebra of $W$, and with the geometry of the quotient singularity $(\hh \oplus \hh^*)/W$, where $\hh$ is the reflection representation of $W$.

In this paper we focus on the groups $G(r,1,n)$ and use the commutative subalgebra $\ttt$ of $\HH$ introduced by Dunkl and Opdam in \cite{DuOp}, section 3, together with the technique of intertwining operators, introduced for the double affine Hecke algebras by Cherednik (see, for example, \cite{Che2} and the references therein).  Our first main result is Theorem \ref{Upper triangular}, where we show that $\ttt$ acts in an upper triangular fashion on each standard module $M(\lambda)$ with simple spectrum for generic choices of the defining parameters; the theorem implies the existence of a basis $f_{\mu,T}$ of $M(\lambda)$ consisting of functions orthogonal with respect to the contravariant form on $M(\lambda)$.  Our second main result is Theorem \ref{submodules theorem}, in which we describe in a combinatorial way the set of pairs $(\mu,T)$ indexing a basis $f_{\mu,T}$ for each submodule of $M(\lambda)$, in those cases for which the orthogonal functions $f_{\mu,T}$ are all well defined.  Theorem \ref{submodules theorem} may be helpful for understanding two of the outstanding problems in the representation theory of $\HH$: what are the finite dimensional $\HH$-modules, and (when combined with the norm formula of Theorem \ref{norm formula}) what are the graded composition multiplicities $[M(\lambda_1):L(\lambda_2)]$ with respect to the Jantzen filtration of $M(\lambda_1)$?   

One benefit of our approach is that it is elementary, requiring only linear algebra and some well-known combinatorics.  A second benefit is that it provides the most explicit information currently available on the submodule structure of the standard modules (outside of the case of dihedral groups, worked out in \cite{Chm}).  The main disadvantage of our approach is that there is no obvious way of generalizing it to the exceptional complex reflection groups; perhaps this is not surprising, since the groups $G(r,p,n)$ have the most obvious combinatorial structure of all the complex reflection groups.  A second disadvantage is that one has to work harder to obtain detailed information when the spectrum of $\ttt$ is \emph{not} simple.  Part of the analysis in this more difficult case is carried out for the symmetric group $G(1,1,n)$ and the polynomial representation $M(\text{triv})$ of $\HH$ in \cite{Dun1} and \cite{Dun2}.  We do not attempt to obtain analogous results here, but it should be noted that our Theorem \ref{submodules theorem} was inspired by a question posed at the end of the paper \cite{Dun2}.  It would be interesting to generalize the results of the papers \cite{Dun1} and \cite{Dun2} to arbitrary standard modules $M(\lambda)$.  Our paper provides a starting point.

Another approach to studying the representations of the rational Cherednik algebra is via the corresponding monodromy representations of the finite Hecke algebra of $W$ (the ``Knizhnik-Zamolodchikov functor'').  This is done, for example, in \cite{GGOR} and \cite{Chm}.  It has the advantage of applying to all complex reflection groups (though at present there are only case by case proofs of certain of the necessary theorems for finite Hecke algebras of complex reflection groups).  It is our feeling that, for the infinite family $G(r,p,n)$, the two approachs (via the KZ functor or via diagonalization and intertwining operators) should be regarded as complementary.

We will now state our results more precisely.  Let $W=G(r,1,n)$.  We will assume for the introduction that the reader is familiar with standard Young tableaux on $r$-partitions of $n$ (see section \ref{notation} for our definitions).  As a vector space and for any choice of parameters, the algebra $\HH$ is isomorphic to
\begin{equation}
S(\hh^*) \otimes \CC W \otimes S(\hh).
\end{equation}
The irreducible $\CC W$-modules are indexed by sequences
$\lambda=(\lambda^0,\lambda^1,\dots,\lambda^{r-1})$ of partitions with $n$ total
boxes, and if $S^\lambda$ is the irreducible corresponding to $\lambda$ then it
has a basis $\{v_T \ | \ T \in \text{SYT}(\lambda) \}$ indexed by standard
tableaux on $\lambda$.  The action of a certain generating set of $G(r,1,n)$ on
this basis can be made quite explicit (see Theorem \ref{CW modules theorem}).  The standard module corresponding to $\lambda$ is then
\begin{equation}
M(\lambda)=\text{Ind}_{\CC W \otimes S(\hh)}^\HH S^\lambda,
\end{equation} where $S(\hh)$ acts on $S^\lambda$ via $y.v=0$ for all $y \in \hh$ and $v \in S^\lambda$.  As a vector space
\begin{equation}
M(\lambda) \simeq S(\hh^*) \otimes S^\lambda,
\end{equation} and it follows that the elements $x^\mu \otimes w_\mu^{-1}.v_T$ for $\mu \in \ZZ_{\geq 0}^n$ and $T \in \text{SYT}(\lambda)$ are a basis of $M(\lambda)$, where $w_\mu \in S_n$ is the longest element of $S_n$ so that $\mu^-=w_\mu.\mu$ is non-decreasing. 

Now we can state a simplified version of our first main result.  The more precise statement is Theorem \ref{Upper triangular}.
\begin{theorem}
  There is a partial order $\leq$ on $\ZZ_{\geq 0}^n \times \text{SYT}(\lambda)$
  so that the algebra $\ttt$ acts in an upper-triangular fashion on the basis
  $x^\mu \otimes w_\mu^{-1}.v_T$.  For generic values of the parameters the
  $\ttt$-eigenspaces are one-dimensional, and there exists a family $f_{\mu,T}$
  of elements of $M(\lambda)$ determined by
\begin{enumerate}[(a)]
\item $f_{\mu,T}=x^\mu \otimes w_\mu^{-1}.v_T+\text{lower terms}$, and
\item $f_{\mu,T}$ is a $\ttt$-eigenvector.
\end{enumerate}
\end{theorem}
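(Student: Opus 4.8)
The plan is to compute the action of the commuting generators of $\ttt$ on the basis $\{x^\mu \otimes w_\mu^{-1}.v_T\}$ explicitly, read off the diagonal (eigenvalue) part, and organize the remaining terms into a partial order. First I would recall from \cite{DuOp} the generators of $\ttt$: the abelian subgroup $\langle\zeta_1,\dots,\zeta_n\rangle$ of $W$ of diagonal matrices of order dividing $r$, together with elements $z_1,\dots,z_n\in\HH$ commuting with each other and with the $\zeta_i$, each of the shape $z_i=x_iy_i+\phi_i$ with $\phi_i\in\CC W$. Since $x_i$ and $y_i$ have degrees $\pm 1$ in the natural grading of $\HH$, each $z_i$ has degree $0$; hence $\ttt$ preserves the decomposition $M(\lambda)=\bigoplus_{d\geq 0}M(\lambda)_d$ by polynomial degree $d=|\mu|$, each $M(\lambda)_d$ is finite dimensional, and it is enough to argue inside one such $M(\lambda)_d$, where the statement becomes finite-dimensional linear algebra.

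The heart of the matter is the following computation. Since $y_i$ annihilates $S^\lambda\subseteq M(\lambda)$, for any $v\in S^\lambda$ one has $z_i\cdot(x^\mu\otimes v)=x_i\bigl([y_i,x^\mu]\cdot(1\otimes v)\bigr)+\phi_i\cdot(x^\mu\otimes v)$, where the last term is the $W$-module action on $M(\lambda)$ and $[y_i,x^\mu]\in S(\hh^*)\otimes\CC W$ is obtained by expanding through the defining commutators $[y_i,x_j]$ of $\HH$ (for $G(r,1,n)$ these are supported on the transpositions $s_{kl}$, their $\zeta$-conjugates, and the cyclic reflections $\zeta_k^l$, and the expansion produces divided differences in the $x$-variables). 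Taking $v=w_\mu^{-1}.v_T$, re-expanding everything in the basis $\{x^\nu\otimes w_\nu^{-1}.v_S\}$, and using the explicit seminormal $\CC W$-action of Theorem \ref{CW modules theorem}, I would obtain
$$z_i\cdot(x^\mu\otimes w_\mu^{-1}.v_T)=\beta_i(\mu,T)\,x^\mu\otimes w_\mu^{-1}.v_T+\bigl(\text{a sum of terms }x^\nu\otimes w_\nu^{-1}.v_S\bigr),$$
in which the diagonal coefficient $\beta_i(\mu,T)$ equals $\mu_i$ plus an explicit function of the parameters built from the content and the component of the box of $T$ labelled $w_\mu(i)$ (equivalently, an eigenvalue on $v_T$ of the relevant Jucys--Murphy element of $\CC W$), and in which every $\nu$ occurring in the sum satisfies $\nu^-<\mu^-$ in dominance order, or else $\nu^-=\mu^-$ with $w_\nu$ of strictly smaller Coxeter length than $w_\mu$. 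The cyclic generators $\zeta_i$ act on $x^\mu\otimes w_\mu^{-1}.v_T$ by a single root of unity depending only on $\mu_i\bmod r$ and the component of the box of $T$ labelled $w_\mu(i)$, so they produce nothing off the diagonal.

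I would then define the partial order $\leq$ on $\ZZ_{\geq 0}^n\times\text{SYT}(\lambda)$ exactly so as to make the displayed identity upper triangular: roughly, $(\nu,S)<(\mu,T)$ when $\nu^-<\mu^-$ in dominance, or when $\nu^-=\mu^-$ and $w_\nu$ precedes $w_\mu$ in a suitable refinement (first by Coxeter length, then by a Bruhat-type comparison), with any remaining ties among tableaux broken by the dominance order on content sequences. Verifying that \emph{every} off-diagonal term produced by the core computation is genuinely $\leq$-smaller than $(\mu,T)$ is, I expect, the main obstacle: it comes down to a careful combinatorial analysis of how applying a transposition, or one of the divided differences coming from $[y_i,x^\mu]$, to the composition $\mu$ changes both $\mu^-$ and the sorting permutation $w_\mu$, simultaneously with the change it forces on the tableau through the seminormal action --- and $\leq$ must be coarse enough that all such terms are $\leq$-comparable to the diagonal term yet fine enough to retain the genericity below.

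Finally, for the genericity statement and the construction of $f_{\mu,T}$: the tuple $\bigl(\beta_1(\mu,T),\dots,\beta_n(\mu,T)\bigr)$ together with the $\zeta_i$-eigenvalues is an explicit, affine-linear function of the central parameters. Specializing all parameters to $0$ gives $\beta_i(\mu,T)=\mu_i$, so this tuple already determines $\mu$; and for fixed $\mu$ the dependence of the $\beta_i$ and of the $\zeta_i$-eigenvalues on the parameters encodes the content sequence and the component sequence of $T$, which together determine $T$ (at each step $1,\dots,n$ there is at most one addable box of a prescribed content in a prescribed component of $\lambda$). Hence distinct pairs $(\mu,T)$ give distinct joint $\ttt$-eigenvalue tuples for all parameters outside a countable union of proper hypersurfaces; for such parameters the commuting upper-triangular operators $z_i$ and $\zeta_i$ have pairwise distinct diagonal tuples on each $M(\lambda)_d$, are therefore simultaneously diagonalizable with a unitriangular change of basis, and solving $(z_i-\beta_i(\mu,T))f=0$ recursively down $\leq$ produces a unique $f_{\mu,T}=x^\mu\otimes w_\mu^{-1}.v_T+(\text{strictly }\leq\text{-lower terms})$ that is a joint $\ttt$-eigenvector, which is the assertion.
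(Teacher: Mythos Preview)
Your approach is the same as the paper's, and your genericity argument is essentially the paper's (comparing the coefficients of $\kappa$, of $c_0$, and the $\zeta_i$-eigenvalues). But your sketch misses the two ideas that actually make the ``main obstacle'' dissolve, and one of them explains a feature of the statement you seem to be working around rather than using.

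First, no ordering on tableaux is needed: the paper's partial order is $(\nu,S)<(\mu,T)\iff \nu<\mu$, with no comparison of $S$ and $T$ whatsoever. The reason this suffices is precisely the $w_\mu^{-1}$ twist in the basis. When you carry out your computation, the coefficient of $x^\mu$ (as an element of $\CC W$ acting on $S^\lambda$) is not $\phi_i$ but the \emph{twisted} Jucys--Murphy element
\[
\phi_i^\mu \;=\; \sum_{\substack{j<i\\ \mu_j<\mu_i}}\sum_{l} t_{\zeta_i^l s_{ij}\zeta_i^{-l}}
\;+\;\sum_{\substack{j>i\\ \mu_j\le\mu_i}}\sum_{l} t_{\zeta_i^l s_{ij}\zeta_i^{-l}}
\;=\; t_{w_\mu}^{-1}\,\phi_{w_\mu(i)}\,t_{w_\mu},
\]
and this is \emph{diagonal} on the basis $v_T^\mu=t_{w_\mu}^{-1}.v_T$, with eigenvalue $r\,\mathrm{ct}(T(w_\mu(i)))$. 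So there are no off-diagonal tableau terms at all; your proposed tie-breaking ``by the dominance order on content sequences'' is unnecessary, and in fact the appearance of that clause suggests you have not yet seen why the basis is $x^\mu\otimes w_\mu^{-1}.v_T$ rather than $x^\mu\otimes v_T$.

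Second, the step you flag as the main obstacle is resolved by a cancellation, not by a clever order. The divided differences in $[y_i,x^{\mu+\epsilon_i}]$ produce, among genuinely lower monomials, dangerous terms $x^{s_{ij}\mu}$ with $j<i$ and $\mu_j\le\mu_i$ (for which $s_{ij}.\mu\ge\mu$). These are exactly cancelled by the corresponding summands of $c_0\phi_i\cdot x^\mu$ after commuting $\phi_i$ past $x^\mu$; what survives from $\phi_i$ is the part with $j<i$, $\mu_j>\mu_i$, which \emph{is} lower, together with the $x^\mu$-coefficient assembling into $\phi_i^\mu$ as above. Once you see this cancellation and the conjugacy $\phi_i^\mu=t_{w_\mu}^{-1}\phi_{w_\mu(i)}t_{w_\mu}$, the upper-triangularity is immediate from the single order fact: if $i<j$ and $\mu_i>\mu_j$ then $\mu>s_{ij}.\mu+k(\epsilon_i-\epsilon_j)$ for $0\le k<\mu_i-\mu_j$.
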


We need a bit more notation to state our second main result.  As above let $\mu^-$ be the non-decreasing rearrangement of $\mu$ and let $w_\mu \in S_n$ be the longest element with $w_\mu.\mu=\mu^-$.  Let $\Gamma=\ZZ_{\geq 0}^n \times \text{SYT}(\lambda)$.  For a box $b \in \lambda$ and a positive integer $k$, define
\begin{equation*}
\Gamma_{b,k}=\{(\mu,T) \in \Gamma \ | \ \mu^-_{T^{-1}(b)} \geq k \},
\end{equation*} and for a pair $b_1,b_2 \in \lambda$ of boxes of $\lambda$ and a positive integer $k$, define
\begin{equation*}
\Gamma_{b_1,b_2,k}=\{(\mu,T) \in \Gamma \ | \ \mu^-_{T^{-1}(b_1)}-\mu^-_{T^{-1}(b_2)} > k, \ \text{or} \ \mu^-_{T^{-1}(b_1)}-\mu^-_{T^{-1}(b_2)} =k \ \text{and} \ w_\mu^{-1} T^{-1}(b_1)<w_\mu^{-1}T^{-1}(b_2) \} 
\end{equation*}  Write
\begin{equation}
M_{b,k}=\CC \text{-span} \{f_{\mu,T} \ | \ (\mu,T) \in \Gamma_{b,k} \} \quad \text{and} \quad M_{b_1,b_2,k}=\CC \text{-span} \{ f_{\mu,T} \ | \ (\mu,T) \in \Gamma_{b_1,b_2,k} \}
\end{equation} for the $\CC$-spans of $ \Gamma_{b,k}$ and $\Gamma_{b_1,b_2,k}$.  The algebra $\HH$ depends on parameters $c_0,d_0,d_1,\dots,d_{r-1}$.  Our second main result (Theorem \ref{submodules theorem}) is
\begin{theorem}
Suppose the spectrum of $M(\lambda)$ is simple (see Lemma \ref{simple spectrum}). 
\begin{enumerate}[(a)]
\item  Let $k \in \ZZ_{>0}$, $b \in \lambda^i$.  Then $M_{b,k}$ is an $\HH$-submodule of $M(\lambda)$ exactly if $k=d_i-d_{i-k}+ \text{ct}(b) r c_0$, where ct$(b)$ is the content of the box $b$. 
\item  Let $k \in \ZZ_{>0}$, $b_1 \in \lambda^i$, $b_2 \in \lambda^j$.  Then $M_{b_1,b_2,k}$ is an $\HH$-submodule of $M(\lambda)$ exactly if $k=i-j$ mod $r$ and $k=d_{i}-d_{j}+ (\text{ct}(b_1)-\text{ct}(b_2) \pm 1) r c_0 $.  .
\item Every submodule of $M(\lambda)$ is in the lattice generated by taking intersections and sums of the submodules of types (a) and (b).
\end{enumerate}
\end{theorem}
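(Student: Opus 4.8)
The plan is to reduce everything to the combinatorics of the poset $\Gamma$ together with the $\ttt$-eigenvalue function $(\mu,T)\mapsto (\zeta_1(\mu,T),\dots,\zeta_n(\mu,T))$ attached to $f_{\mu,T}$ in Theorem~\ref{Upper triangular}. Since the spectrum of $M(\lambda)$ is simple, the eigenvectors $f_{\mu,T}$ are a common eigenbasis for $\ttt$, and any $\HH$-submodule $N\subseteq M(\lambda)$ is automatically $\ttt$-stable, hence spanned by the $f_{\mu,T}$ it contains. So a submodule is the same thing as a subset $\Sigma\subseteq\Gamma$ which is closed under the operators of $\HH$; and because $\HH$ is generated by $\CC W$ together with $\hh^*$ and $\hh$, it suffices to understand how the generators move the $f_{\mu,T}$ around. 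First I would record, from the intertwiner calculus already set up for Theorems~\ref{Upper triangular} and~\ref{norm formula}, the precise combinatorial rule: each intertwining operator (the $\sigma_i$, the affine-type intertwiner raising the last coordinate of $\mu$, and the Cherednik-style operators $\Phi$, $\Psi$) sends $f_{\mu,T}$ to a nonzero scalar multiple of $f_{\mu',T'}$ for an explicitly described $(\mu',T')$, \emph{provided} a certain linear-in-the-parameters denominator is nonzero — and simplicity of the spectrum is exactly what guarantees these denominators are nonzero except in the degenerate "wall" situations, where instead the image is $0$. This gives a combinatorial model: define a directed graph on $\Gamma$ with an edge $(\mu,T)\to(\mu',T')$ whenever some generator sends $f_{\mu,T}$ to a nonzero multiple of $f_{\mu',T'}$; then submodules correspond exactly to the "closed" subsets (down-sets for the reachability preorder) of this graph.

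The heart of the argument, and the step I expect to be the main obstacle, is the explicit identification of the closed subsets with the lattice generated by the $\Gamma_{b,k}$ and $\Gamma_{b_1,b_2,k}$. I would proceed in two directions. For the "only if"/structure part (a) and (b): given a box $b\in\lambda^i$, I compute directly that the condition "$\mu^-_{T^{-1}(b)}\ge k$" is preserved by all generators \emph{unless} an intertwiner tries to lower that coordinate from $k$ to $k-1$; the image of $f_{\mu,T}$ under that intertwiner is a multiple of $f_{\mu',T'}$ whose $\ttt$-eigenvalue in the relevant slot differs from that of $f_{\mu,T}$ by exactly the linear expression $k-(d_i-d_{i-k}+\mathrm{ct}(b)rc_0)$, so the image is forced to vanish precisely when $k=d_i-d_{i-k}+\mathrm{ct}(b)rc_0$ — that is the content of (a), and (b) is the entirely analogous computation for the "difference" sets $\Gamma_{b_1,b_2,k}$, where the role of a single $\ttt$-coordinate is played by the difference of two, and the congruence $k\equiv i-j\pmod r$ comes from matching the $\ZZ/r$-component of the eigenvalue. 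The bookkeeping here — keeping track of which tableau $T'$ and which rearrangement $w_{\mu'}$ occur, and checking the content shifts $\mathrm{ct}(b)\pm1$ — is where the real work lies, but it is finite combinatorics of a single box or pair of boxes under the known tableau combinatorics of $G(r,1,n)$.

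For part (c) — that \emph{every} submodule lies in the lattice generated by those of type (a) and (b) — I would argue that the reachability preorder on $\Gamma$ is generated, at a generic-on-the-given-wall choice of parameters, by the "elementary moves" of an intertwiner, and that each elementary move either stays inside, or exits, a set of the form $\Gamma_{b,k}$ or $\Gamma_{b_1,b_2,k}$ only across one of its defining walls; consequently the closure of any single pair $(\mu,T)$ is an intersection of sets of types (a) and (b) (those whose defining walls "separate" $(\mu,T)$ from the rest in the appropriate direction), and an arbitrary submodule, being a union of such closures, lies in the lattice they generate. Concretely: let $\Sigma\subseteq\Gamma$ be closed; for each $(\mu,T)\notin\Sigma$ and each $(\nu,S)\in\Sigma$ one finds a wall of some $\Gamma_{b,k}$ or $\Gamma_{b_1,b_2,k}$ with $(\nu,S)$ on the "inside" and $(\mu,T)$ on the "outside" — this is where I would need a short separation lemma, proved by following the chain of elementary moves from $(\nu,S)$ and noting that the non-vanishing of all the intertwiner denominators along the way (guaranteed off the finitely many walls, and at a wall only the "named" walls can obstruct) forces the reachable set to be cut out by exactly those named inequalities. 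Intersecting over $(\mu,T)\notin\Sigma$ the smallest lattice element containing the closure of each $(\nu,S)\in\Sigma$ then recovers $\Sigma$ on the nose. The one genuinely delicate point, which I would isolate as its own lemma, is that when the parameters lie on several of the walls simultaneously the obstructions can only \emph{combine}, never cancel — i.e. a move vanishes iff at least one named wall is hit — so that the lattice description is stable under specialization; this is ultimately a positivity/sign statement about the linear forms $k-(d_i-d_{i-k}+\mathrm{ct}(b)rc_0)$ and is the place where I expect to spend the most care.
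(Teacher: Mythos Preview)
Your overall architecture matches the paper's: reduce to the calibration graph on $\Gamma$, identify submodules with closed subsets, and for (a) and (b) compute directly that the only move exiting $\Gamma_{b,k}$ (resp.\ $\Gamma_{b_1,b_2,k}$) is a $\Psi$-move (resp.\ a $\sigma_i$-move) whose scalar coefficient, read off from Lemma~\ref{action lemma}, is precisely the linear form in the statement. That part is fine and essentially what the paper does.

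For (c), however, your ``separation lemma'' is too vague and your identification of the delicate point is off-target. The paper's mechanism is not a positivity or non-cancellation statement about the linear forms; it is a purely combinatorial connectivity result about the \emph{generic} calibration graph $\Gamma^{\mathrm{gen}}$. The paper introduces the \emph{inversion set} $R(\mu,T)$ (the collection of all $\Gamma_{b,k}$ and $\Gamma_{b_1,b_2,k}$ containing $(\mu,T)$), records in Lemma~\ref{inversion lemma} exactly how $R(\mu,T)$ changes under each elementary move, and then proves the key Lemma~\ref{paths lemma}: between any two vertices $(\mu,T)$ and $(\nu,S)$ there is a path in $\Gamma^{\mathrm{gen}}$ along which the inversion set changes only by adjoining elements of $R(\nu,S)$ or deleting elements not in $R(\nu,S)$. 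This lemma is what forces the submodule generated by $f_{\mu,T}$ to be exactly the intersection of the closed $M_{b,k}$ and $M_{b_1,b_2,k}$ containing $(\mu,T)$; the vanishing of an intertwiner along the path is then automatically accounted for, because (by the computations in (a) and (b)) such a vanishing \emph{is} the crossing of a closed wall. Your worry that several walls might interact or cancel does not arise: each $\Psi$-scalar is a single linear factor and each $\sigma_i$-scalar is a product of two, and each factor names one wall, so vanishing is equivalent to hitting at least one named wall --- no subtle positivity is needed. What you are missing is the existence of the controlled path (Lemma~\ref{paths lemma}), whose proof is a nontrivial induction on a distance function $|R(\mu,T)\,\Delta\,R(\nu,S)|+\ell(S^{-1}T)$ and is where the real combinatorial work sits.
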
  Using the theorem it is straightforward to generate many examples of finite dimensional representations of the rational Cherednik algebra of $G(r,1,n)$ (see \ref{typeB} and \ref{typer}).  As far as we know, ``most'' of these have not appeared before.  In \cite{Suz} and for $r=1$, Suzuki classifies the irreducible modules on which $\ttt$ acts with one-dimensional eigenspaces.  In many cases $L(\lambda)$ has one-dimensional eigenspaces while $M(\lambda)$ does not, so we do not deduce his results from ours.  On the other hand, we are interested in the structure of the standard modules themselves, which does not follow directly from Suzuki's work even for $r=1$.  

In Corollary~\ref{finite dimensional}, we construct a large number of finite dimensional $\HH$-modules, most of which have not appeared before.

Section 2 fixes notation, and sections 3 and 4 of the paper are a review of standard material.  We only sketch the proofs of the assertions made in these sections, providing references where appropriate.  Section 3 describes Young's orthonormal form for the complex representations of the group $G(r,1,n)$.  In the fourth section we define the rational Cherednik algebra $\HH$ and state results we need in the rest of the paper.  In the fifth section we show that a certain commutative subalgebra $\ttt \subseteq H$ acts in an upper triangular fashion on the standard $\HH$-modules, and give explicit formulas for the eigenvalues.  For generic choices of the parameters defining $\HH$, we show that each standard module has a $\ttt$-eigenbasis consisting of analogues of the non-symmetric Jack functions.  We give explicit formulas determining the $\HH$-action on this eigenbasis.  In the sixth section we compute the norms of the $\ttt$-eigenfunctions with respect to the contravariant form.  This calculation may be used to describe the radical and the irreducible head of the standard modules in those cases for which the eigenfunctions are well-defined.  The seventh section contains our combinatorial description of the submodules of $M(\lambda)$ in the case in which the $\ttt$-eigenspaces are all at most one-dimensional.  The combinatorics developed in this section for the purposes of proving Theorem \ref{submodules theorem} should also be useful for analyzing the case in which $\ttt$ acts non-semisimply on $M(\lambda)$.  Section~\ref{submodules section} concludes with some examples illustrating how Theorem \ref{submodules theorem} may be used.  In Section~\ref{Clifford section} we show how we may deduce information for the rational Cherednik algebra of type $G(r,p,n)$ from that for $G(r,1,n)$ by using the version of Clifford theory given in the appendix of \cite{RaRa}.

\section{Notation and preliminaries} \label{notation}

Let $S_n$ be the group of permutations of the set $\{1,2,\dots,n\}$.  The notation $w_1 \leq w_2$ for $w_1,w_2 \in S_n$ refers to Bruhat order, and we write $l(w)$ for the length of an element $w \in S_n$.  Let $w_0 \in S_n$ be the \emph{longest element}, with
\begin{equation}
w_0(i)=n-i+1 \quad \hbox{for $1 \leq i \leq n$.}
\end{equation}  For a sequence $\mu \in \ZZ_{\geq 0}^n$ of $n$ non-negative integers, we write $\mu^+$ for the non-increasing (partition) rearrangement of $\mu$, and $\mu^-$ for the non-decreasing (anti-partition) rearrangement of $\mu$.  For $w \in S_n$ and $\mu \in \ZZ_{\geq 0}^n$, the formula
\begin{equation}
w.\mu=(\mu_{w^{-1}(1)},\mu_{w^{-1}(2)},\dots,\mu_{w^{-1}(n)})
\end{equation} defines a left action of $S_n$ on $\ZZ_{\geq 0}^n$: the $i$th coordinate of $w_1.(w_2.\mu)$ is $(w_2.\mu)_{w_1^{-1}(i)}=\mu_{w_2^{-1} w_1^{-1}(i)}=\mu_{(w_1 w_2)^{-1}(i)}$.  Let $w_\mu$ be the longest element of $S_n$ such that $w_\mu.\mu=\mu^-$; thus
\begin{equation} \label{wmu def}
w_\mu(i)=|\{1 \leq j < i \ | \ \mu_j < \mu_i \}|+|\{i \leq j \leq n \ | \ \mu_j \leq \mu_i \}|.
\end{equation}  Let $c=(12 \cdots n)$ be the long cycle in $S_n$, let $\epsilon_i \in \ZZ_{\geq 0}^n$ be the element with a $1$ in the $i$th position and $0$'s elsewhere, and define
\begin{equation}
\phi.\mu=c^{-1}.\mu+\epsilon_n \quad \text{and} \quad
\psi.\mu=c.\mu-\epsilon_1.
\end{equation}  For $\mu \in \ZZ_{\geq 0}^n$ one has
\begin{equation}
w_{\phi.\mu}=w_\mu c \quad \hbox{and if $s_i.\mu \neq \mu$ then} \quad  w_{s_i.\mu}=w_\mu s_i.
\end{equation}

Let $m$ be a non-negative integer.  A \emph{partition} of length $m$ is a non-increasing sequence $\lambda=(\lambda_1 \geq \lambda_2 \geq \cdots \geq \lambda_m \geq 0)$ of $m$ positive integers.  Let $r$ be a positive integer.  An \emph{$r$-partition} is a sequence $\lambda=(\lambda^0,\lambda^1,\dots,\lambda^{r-1})$ of $r$ partitions of unspecified lengths---some may be empty.  The \emph{Young diagram} of an $r$ partition $\lambda$ is the graphical representation consisting of a collection of boxes stacked in a corner: the Young diagram for the $4$-partition $\left((3,3,1),(2,1),\emptyset,(5,5,2,1) \right)$ of $23$ is 

\begin{equation*}
\left(\begin{array}{cccc} \begin{array}{c} \yng(3,3,1) \end{array}, &  \begin{array}{@{}c} \yng(2,1) \end{array}, &  \begin{array}{@{}c} \emptyset \end{array}, &  \begin{array}{@{}c} \yng(5,5,2,1) \end{array}
\end{array} \right).
\end{equation*}

  A \emph{standard young tableau} $T$ on an $r$-partition $\lambda$ of $n$ is a filling of the boxes of $\lambda$ with the integers $\{1,2,\dots,n\}$ in such a way that within each partition $\lambda^i$, the entries are strictly increasing from left to right and top to bottom.  For example,

\begin{equation*}
\young(246,39),\ \young(15,78)
\end{equation*} is a standard Young tableau on the $2$-partition $(3,2),(2,2)$.  More formally, a tableau $T$ may be thought of as a bijection from the set $\{1,2,\dots,n\}$ to the set of boxes in $\lambda$.  We write
\begin{equation}
T(i)=\hbox{the box $b$ of $\lambda$ in which $i$ appears}.
\end{equation}  Then for tableaux $S$ and $T$ on $\lambda$, the permutation $S^{-1} T$ of $\{1,2,\dots,n\}$ is a measure of the distance between $S$ and $T$.  

For an $r$-partition $\lambda$ we write
\begin{equation}
\text{SYT}(\lambda)=\{\hbox{standard Young tableaux on $\lambda$} \}.
\end{equation}  We define the \emph{content} of a box $b \in \lambda^i$ to be $j-i$ if $b$ is in the $i$th row and $j$th column of $\lambda^i$.  We write
\begin{equation} \label{content def}
\text{ct}(b)=\hbox{content of $b$.}
\end{equation}  We also define the function $\beta$ on the set of boxes of $\lambda$ by 
\begin{equation} \label{beta def}
\beta(b)=i \quad \hbox{if $b \in \lambda^i$.}
\end{equation}  Thus for the tableau $T$ pictured above, one has
\begin{equation*}
\beta(T(7))=1 \quad \text{and} \quad \text{ct}(T(3))=-1.
\end{equation*}

Fix positive integers $r$ and $n$ and a positive integer $p$ dividing $r$.  Let
\begin{equation}
W=G(r,p,n)
\end{equation} be the group of $n$ by $n$ monomial matrices whose entries are $r$th roots of $1$, and so that the product of the non-zero entries is an $r/p$th root of $1$.  We write
\begin{equation}
\CC W=\CC \text{-span} \{t_w \ | \ w \in W \} \quad \text{with multiplication} \quad t_{w_1} t_{w_2}=t_{w_1 w_2} \ \hbox{for $w_1,w_2 \in W$}
\end{equation} for the group algebra of $W$.  Let $\zeta=e^{2 \pi i/r}$ and
\begin{equation}
\zeta_i=\text{diag}(1,\dots,\zeta,\dots,1)
\end{equation} be the diagonal matrix with a $\zeta$ in the $i$th position and $1$'s elsewhere on the diagonal.  Let
\begin{equation}
s_{ij}=(ij) \quad \text{and} \quad s_i=s_{i,i+1}
\end{equation} be the transposition interchanging $i$ and $j$ and the $i$th simple transposition, respectively.

\section{Representations of $G(r,1,n)$ via Jucys-Murphy elements.}

In this section we review the approach to the complex representations of $W=G(r,1,n)$ via Jucys-Murphys elements.  This approach has been known for quite some time.  It appears that the Jucys-Murphy elements we will use appeared first in \cite{Che1} for the groups $G(2,1,n)$ and $G(2,2,n)$, and for the groups $G(r,p,n)$ in \cite{RaSh}.  The results, in the form that we will use them, may be found in unpublished notes \cite{Ram} and \cite{Gri3}.  We will use these results to give a combinatorial description of the standard modules for the rational Cherednik algebra of type $G(r,1,n)$.

For $1 \leq i \leq n$ define a \emph{Jucys-Murphy element} $\phi_i \in \CC W$ by
\begin{equation} \label{psidef}
\phi_i=\sum_{1 \leq j <i} \sum_{l=0}^{r-1} t_{\zeta_i^l s_{ij} \zeta_i^{-l}}.
\end{equation}  Let $\mathfrak{s}$ be the subalgebra of $\CC W$ generated by $\phi_i$ and $t_{\zeta_i}$ for $1 \leq i \leq n$.  We will describe the eigenspace decomposition of the irreducible $\CC W$-modules with respect to $\mathfrak{s}$ in terms of certain tableaux. 

The following theorem, stated in this way, may be found in the notes at \cite{Gri3}; see Corollary 3.5 and section 4.  Equivalent results are contained in \cite{Ram}.  Similar results are given in the papers \cite{Ari} and \cite{ArKo}.  It should be considered well-known, though we do not know of a published reference in which it is stated in the form we shall use it.  For $1 \leq i \leq n-1$ define operators $\tau_i$ on $\CC W$-modules by
\begin{equation}
\tau_i=t_{s_i}+\frac{1}{\phi_i - \phi_{i+1}} \pi_i \quad \text{where} \quad \pi_i=\sum_{l=0}^{r-1} t_{\zeta_i \zeta_{i+1}^{-1}}^l.
\end{equation}  By part (b) of the Theorem~\ref{CW modules theorem}, $\tau_i$ is well-defined on every $\CC W$ module.  Let $P_{r,n}$ denote the set of $r$-partitions of $n$.

\begin{theorem} \label{CW modules theorem}
The irreducible representations of $\CC W$ may be parametrized by $\lambda \in P_{r,n}$ in such a way that if $S^{\lambda}$ is the irreducible module corresponding to $\lambda$, then $S^{\lambda}$ has a basis $v_T$ indexed by $T \in \text{SYT}(\lambda)$ with

\begin{enumerate}[(a)]

\item  For each $T \in \text{SYT}(\lambda)$, the $\mathfrak{s}$-weight of $v_T$ is given by
\begin{equation*}
\phi_i.v_T=r \text{ct}(T(i)) v_T \quad \text{ and} \quad t_{\zeta_i}.v_T=\zeta^{\beta(T(i))} v_T.
\end{equation*} where the functions ct and $\beta$ are defined in \eqref{content def} and \eqref{beta def}.
 
\item   For each $T \in \text{SYT}(\lambda)$ and $1\leq i \leq n-1$,
\begin{equation*}
\tau_i.v_T=\begin{cases} 0 \quad & \hbox{if $s_i.T$ is not a standard tableau,} \\
v_{s_i.T} \quad &\hbox{if $\zeta^{\beta(T(i))} \neq \zeta^{\beta(T(i+1))}$, and} \\
\left(1-\left(\frac{1}{\text{ct}(T(i+1))-\text{ct}(T(i))} \right)^2 \right)^{1/2} v_{s_i.T} \quad &\hbox{if $s_i.T$ is a standard tableau and $\zeta^{\beta(T(i))}=\zeta^{\beta(T(i+1))}$.}
\end{cases}
\end{equation*} 
\end{enumerate}
\end{theorem}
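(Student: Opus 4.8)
The plan is to reduce the statement to Young's orthogonal form for the symmetric group by exploiting the semidirect product structure $W=G(r,1,n)=A\rtimes S_n$, where $A=\langle\zeta_1,\dots,\zeta_n\rangle\cong(\ZZ/r\ZZ)^n$ is the group of diagonal matrices in $W$. Given an $r$-partition $\lambda=(\lambda^0,\dots,\lambda^{r-1})$ with $|\lambda^i|=n_i$, form the subgroup $B=A\rtimes(S_{n_0}\times\cdots\times S_{n_{r-1}})$, with the symmetric factors acting on the consecutive blocks of $\{1,\dots,n\}$ of sizes $n_0,\dots,n_{r-1}$, and let $U_\lambda$ be the $B$-module on which $S_{n_0}\times\cdots\times S_{n_{r-1}}$ acts through $S^{\lambda^0}\boxtimes\cdots\boxtimes S^{\lambda^{r-1}}$ (built from Young's orthonormal form in each factor) and on which $\zeta_k$ acts by the scalar $\zeta^i$ when $k$ lies in the $i$th block. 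Put $S^\lambda=\text{Ind}_B^W U_\lambda$. Taking minimal length coset representatives of $W/B\cong S_n/(S_{n_0}\times\cdots\times S_{n_{r-1}})$ together with the orthogonal form bases of the $S^{\lambda^i}$ gives a basis of $S^\lambda$ naturally labelled by $\text{SYT}(\lambda)$: a coset representative records an assignment of $\{1,\dots,n\}$ to the $r$ blocks, and a tuple of standard tableaux then fills the diagram of $\lambda$. I would write $v_T$ for these basis vectors, fixing their signs so that the $W$-invariant Hermitian form induced from the orthonormal forms on the factors makes $(v_T)$ an orthonormal basis and the elementary transpositions act between blocks by honest (unsigned) permutation matrices.

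For part (a), $v_T$ lies by construction in the $A$-weight space on which $\zeta_k$ acts by $\zeta^{\beta(T(k))}$, which is the second formula. For the first, the conjugation identity $\zeta_i^l s_{ij}\zeta_i^{-l}=\zeta_i^l\zeta_j^{-l}s_{ij}$ rewrites the Jucys--Murphy element as $\phi_i=\sum_{j<i}\bigl(\sum_{l=0}^{r-1}t_{\zeta_i}^l t_{\zeta_j}^{-l}\bigr)t_{s_{ij}}$. Since the inner sum acts on any $A$-weight vector as $r$ times the projection onto vectors whose $i$th and $j$th weights agree and as $0$ otherwise, only the transpositions $s_{ij}$ with $j<i$ in the same block as $i$ contribute, and $s_{ij}$ then preserves that block's label; on the span of the basis vectors with a fixed block assignment this is exactly the classical Jucys--Murphy element of the relevant symmetric group, whose eigenvalue on the corresponding Young basis vector is the content of the box holding $i$. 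Hence $\phi_i.v_T=r\,\text{ct}(T(i))v_T$.

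Part (b) is a short case analysis built on (a). If $\beta(T(i))\neq\beta(T(i+1))$ then the character $\zeta_i\zeta_{i+1}^{-1}$ is nontrivial on the weight space of $v_T$, so $\pi_i.v_T=0$, while $s_i.T$ is automatically standard and $t_{s_i}.v_T=v_{s_i.T}$ by the normalisation above, giving $\tau_i.v_T=v_{s_i.T}$. If $\beta(T(i))=\beta(T(i+1))$ then $\pi_i$ acts by the scalar $r$, the operator $\phi_i-\phi_{i+1}$ acts by $r(\text{ct}(T(i))-\text{ct}(T(i+1)))$, which is nonzero because $i$ and $i+1$ occupy the same component of $\lambda$ (Young's theorem that axial distances in a standard tableau never vanish), and $t_{s_i}$ acts inside that component by Young's orthonormal form; adding the three contributions produces the last two cases of the stated formula, the value $0$ occurring precisely when $i$ and $i+1$ lie in the same row or column, where the axial distance is $\pm1$. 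The same bookkeeping shows $\tfrac{1}{\phi_i-\phi_{i+1}}\pi_i$, hence $\tau_i$, is well defined on an arbitrary $\CC W$-module: $\pi_i$ projects onto the union of the weight spaces with $\beta(T(i))=\beta(T(i+1))$, on which $\phi_i-\phi_{i+1}$ is invertible.

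Finally one must check that the $S^\lambda$ are pairwise non-isomorphic, irreducible, and exhaust the irreducible $\CC W$-modules. I would deduce this from Clifford theory for the abelian normal subgroup $A$: the $S_n$-orbits on the character group of $A$ are indexed by the compositions $(n_0,\dots,n_{r-1})$ of $n$, the associated little groups are the $S_{n_0}\times\cdots\times S_{n_{r-1}}$, and each $S^\lambda$ is the module attached to such an orbit and an irreducible of its little group; alternatively, a direct dimension count gives $\sum_\lambda(\dim S^\lambda)^2=|W|$. The main obstacle is not any single step but the bookkeeping hidden in the sign normalisation: one has to verify that the signs of the $v_T$ can be chosen so that every $t_{s_i}$ acts simultaneously by Young's orthonormal form within a block and by a plain permutation matrix between blocks, which amounts to a compatibility check among Young's square-root sign conventions on the symmetric group factors. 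An efficient abstract alternative that avoids the explicit construction is the Okounkov--Vershik method: show the chain $G(r,1,0)\subset G(r,1,1)\subset\cdots$ is multiplicity free by checking that the centraliser of $\CC G(r,1,k-1)$ in $\CC G(r,1,k)$ is commutative, identify the Gelfand--Tsetlin algebra with the algebra generated by the $\phi_i$ and $t_{\zeta_i}$, and read off the admissible joint spectra from the commutation relations among the $\phi_i$, $t_{\zeta_i}$ and the $s_i$; the content vectors then match $\text{SYT}(\lambda)$ via the branching rule that removes a box from one of the $\lambda^i$.
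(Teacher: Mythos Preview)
The paper does not actually prove this theorem: it is stated as a known result, with pointers to \cite{Gri3}, \cite{Ram}, \cite{Ari}, and \cite{ArKo}, and the remark that ``it should be considered well-known, though we do not know of a published reference in which it is stated in the form we shall use it.'' So there is no proof in the paper to compare against; what you have written is a genuine proof sketch where the paper only gives a citation.

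Your argument is sound and is essentially the standard construction of the irreducibles of a wreath product $(\ZZ/r\ZZ)\wr S_n$ via Clifford theory for the abelian normal subgroup $A$, specialised so as to make Young's orthonormal form visible on each symmetric-group factor. The computation of the $\phi_i$-eigenvalues in part (a) is exactly right: rewriting $\zeta_i^l s_{ij}\zeta_i^{-l}=\zeta_i^l\zeta_j^{-l}s_{ij}$ turns the inner sum into $r$ times the projector onto equal $A$-weights in slots $i,j$, collapsing $\phi_i$ to $r$ times the classical Jucys--Murphy element on the relevant block. The case split in (b) is also correct, and you correctly flag that the only delicate point is the sign/phase normalisation of the $v_T$ so that $t_{s_i}$ acts by a genuine permutation matrix between blocks while simultaneously acting by Young's orthonormal form within a block. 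Concretely, if $u$ is a minimal-length coset representative and $i,i+1$ lie in different blocks, one must check that $s_i u$ is again minimal (it is, since $u^{-1}(i)$ and $u^{-1}(i+1)$ sit in different Young subgroup factors), so no extra factor from $B$ appears; this is the verification that makes $t_{s_i}.v_T=v_{s_i.T}$ honest. The Okounkov--Vershik alternative you mention is equally valid and is closer in spirit to the sources the paper cites.
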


Observe that the $\mathfrak{s}$-eigenspaces on $S^{\lambda}$ are one-dimensional.  In particular, the dimension of $S^{\lambda}$ is the number of standard tableaux on $\lambda$.

Let $\mu \in \ZZ_{\geq 0}^n$.  In our analysis of the spectrum of the standard modules for the rational Cherednik algebra we will need the elements $\phi_i^\mu$ defined by
\begin{equation}
\phi_i^\mu=\sum_{\substack{ 1 \leq j < i \\ \mu_j < \mu_i \\ 0 \leq l \leq r-1}} t_{\zeta_i^l s_{ij} \zeta_i^{-l}}  
 +\sum_{\substack{ i < j \leq n \\ \mu_j \leq \mu_i \\ 0 \leq l \leq r-1}} t_{\zeta_i^l s_{ij} \zeta_i^{-l}} \quad \hbox{for $1 \leq i \leq n$.}
\end{equation}  One checks by direct calculation that with $w_\mu$ as in \eqref{wmu def}
\begin{equation*}
t_{w_\mu}^{-1} \phi_{w_\mu(i)} t_{w_\mu} = \phi_i^\mu \quad \hbox{for $1 \leq i \leq n$ and $\mu \in \ZZ_{\geq 0}^n$.}
\end{equation*}  Hence if
\begin{equation} \label{JM basis}
v_T^\mu=t_{w_\mu}^{-1}.v_T
\end{equation} then
\begin{equation}
\phi_i^\mu.v_T^\mu=r \text{ct}(T(w_\mu(i))) v_T^\mu \quad \text{and} \quad t_{\zeta_i}.v_T^\mu=\zeta^{\beta(T(w_\mu(i)))} v_T^\mu.
\end{equation} 

\section{The rational Cherednik algebra of type $G(r,1,n)$.}

Let
\begin{equation*}
y_i=(0,\dots,1,\dots,0)^t \quad \text{and} \quad x_i=(0,\dots,1,\dots,0)
\end{equation*} have $1$'s in the $i$th position and $0$'s elsewhere, so that $y_1,\dots,y_n$ is the standard basis of $\hh=\CC^n$ and $x_1,\dots,x_n$ is the dual basis in $\hh^*$.  Let $F \supseteq \CC$ be a field containing the complex numbers and fix $\kappa,c_0,d_1,\dots,d_{r-1} \in F$.  In our applications, $F$ will be either $\CC$ or the field of rational functions in the parameters $\kappa,c_0,d_1,\dots,d_{r-1}$ over $\CC$.  We define $d_i$ for all $i \in \ZZ$ by the equations
\begin{equation}
d_0+d_1+\cdots+d_{r-1}=0 \quad \text{and} \quad d_i=d_j \quad \hbox{if $i=j$ mod $r$.}
\end{equation}

The \emph{rational Cherednik algebra} $\HH$ for $G(r,1,n)$ with parameters $\kappa,c_0,d_1,\dots,d_{r-1}$ is the $F$-algebra generated by $F[x_1,\dots,x_n]$, $F[y_1,\dots,y_n]$, and $F G(r,1,n)$ with relations 
\begin{equation*}
t_w x=(wx) t_w \quad \text{and} \quad t_w y=(wy) t_w,
\end{equation*} for $w,v \in W$, $x \in \hh^*$, and $y \in \hh$, 
\begin{equation} \label{cf2}
y_i x_j=x_j y_i+c_0 \sum_{l=0}^{r-1} \zeta^{-l} t_{\zeta_i^l s_{ij} \zeta_i^{-l}},
\end{equation} for $1\leq i \neq j \leq n$, and 
\begin{equation} \label{cf3}
y_i x_i=x_i y_i+\kappa-\sum_{j=0}^{r-1}(d_j-d_{j-1}) e_{ij}-c_0 \sum_{j \neq i} \sum_{l=0}^{r-1} t_{\zeta_i^l s_{ij} \zeta_i^{-l}},
\end{equation} for $1 \leq i \leq n$, where $e_{ij} \in \CC W$ is the idempotent
\begin{equation}
e_{ij}=\frac{1}{r}\sum_{l=0}^{r-1} \zeta^{-lj} t_{\zeta_i^l}.
\end{equation}  The \emph{PBW theorem} (see \cite{Dri}, \cite{EtGi}, \cite{RaSh}, and \cite{Gri2}) for $\HH$ asserts that as $F$-vector spaces,
\begin{equation} \label{PBW theorem}
\HH \simeq F[x_1,\dots,x_n] \otimes_F F W \otimes_F F[y_1,\dots,y_n].
\end{equation}  

The following proposition is obtained from Proposition 2.3 of \cite{Gri2} by specialization to the case $W=G(r,1,n)$.  The parameters in that paper are attached to conjugacy classes of reflections.  If $c_l$ is the parameter attached to the class containing $\zeta_1^l$ then the formula $c_l=\frac{1}{r} \sum_{j=0}^{r-1} \zeta^{-lj} d_j$ (for $l=1,2,\dots,r-1$) relates these parameters to ours.
\begin{proposition} \label{comm formula for y and f}
Let $\mu \in \ZZ_{\geq 0}^n$ and $1 \leq i \leq n$.  Then
\begin{align*}
y_ix^\mu&=x^\mu y_i+\kappa \mu_i x^{\mu-\epsilon_i}-c_0 \sum_{j \neq i} \sum_{l=0}^{r-1}  \frac{x^{\mu}-\zeta_i^l s_{ij} \zeta_i^{-l} x^{\mu}}{x_i-\zeta^l x_j} t_{\zeta_i^l s_{ij} \zeta_i^{-l}} 
 -\sum_{j=0}^{r-1} d_j x^{\mu-\epsilon_i} (e_{i,j}-e_{i,j+\mu_i}),
\end{align*} where $\epsilon_i$ has a $1$ in the $i$th position and $0$'s elsewhere.
\end{proposition}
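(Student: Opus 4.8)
The plan is to prove the formula by induction on the total degree $|\mu| = \mu_1 + \cdots + \mu_n$, directly from the defining relations \eqref{cf2} and \eqref{cf3}. (One may instead specialize Proposition~2.3 of \cite{Gri2} to $W = G(r,1,n)$ and translate the parameters via $c_l = \frac{1}{r}\sum_{j=0}^{r-1}\zeta^{-lj}d_j$; I will describe the self-contained argument.) Since $\HH$ contains the commutative ring $F[x_1,\dots,x_n]$, for any $k$ with $\mu_k \geq 1$ we have $x^\mu = x_k x^{\mu-\epsilon_k}$, hence the recursion
\begin{equation*}
y_i x^\mu - x^\mu y_i = x_k\bigl(y_i x^{\mu-\epsilon_k} - x^{\mu-\epsilon_k} y_i\bigr) + [\,y_i, x_k\,]\,x^{\mu-\epsilon_k},
\end{equation*}
in which the commutator $[y_i, x_k]$ is read off from \eqref{cf3} when $k = i$ and from \eqref{cf2} when $k \neq i$. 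The base case $|\mu| = 0$ is trivial, both sides being zero, provided one adopts the standard convention that the terms on the right-hand side carrying a factor $\mu_i$ or a difference $e_{i,j} - e_{i,j+\mu_i}$ are read as $0$ when $\mu_i = 0$, so that $x^{\mu-\epsilon_i}$ is never evaluated at a negative exponent.

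For the inductive step I would substitute the formula (known for $\mu - \epsilon_k$) into the recursion, move every group element appearing in $[y_i,x_k]\,x^{\mu-\epsilon_k}$ to the right using $t_w x^\nu = (wx^\nu)\,t_w$, and compare the result with the claimed expression for $\mu$ term by term. The $\kappa$-term is immediate because $\mu_i = (\mu-\epsilon_k)_i + \delta_{ik}$. The $d_j$-terms follow from the commutation $e_{i,m}\,x^\nu = x^\nu\,e_{i,m+\nu_i}$ (the sign of $\nu_i$ depending on the convention for the $\zeta_i$-action on $\hh^*$), which is immediate from the definition of $e_{ij}$ and the diagonal relation between $t_{\zeta_i}$ and $x_i$, together with the periodicity $d_m = d_{m-r}$.

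The only substantive point is the $c_0$-term. Fix $j \neq i$ and $0 \leq l \leq r-1$ and put $w = \zeta_i^l s_{ij}\zeta_i^{-l}$. Since $w$ sends $x_i - \zeta^l x_j$ to its negative, the twisted divided difference $\partial(f) = \bigl(f - w(f)\bigr)/(x_i - \zeta^l x_j)$ carries polynomials to polynomials and obeys the Leibniz-type rule $\partial(fg) = \partial(f)\,g + w(f)\,\partial(g)$. Applying this with $f = x_k$ and $g = x^{\mu-\epsilon_k}$, and using $w(x_k) = x_k$ for $k \notin\{i,j\}$, $w(x_i) = \zeta^l x_j$, $w(x_j) = \zeta^{-l} x_i$, reduces each case to the tautology $(x_i - \zeta^l x_j)\,\partial(x^\nu) = x^\nu - w(x^\nu)$. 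I expect the delicate case to be $k = j$, where $[y_i, x_k]\,x^{\mu-\epsilon_k}$ is itself a $w$-term that must be absorbed into the $j = k$ summand, and the powers of $\zeta$ produced by $w(x_i)$ and $w(x_j)$ have to be reconciled with the $\zeta^{-l}$ occurring in \eqref{cf2}; the two degenerate checks $\mu = \epsilon_i$ and $\mu = \epsilon_k$ with $k \neq i$, which must recover \eqref{cf3} and \eqref{cf2} respectively, pin down all the signs. Everything else is bookkeeping.
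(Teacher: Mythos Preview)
Your argument is correct. The paper does not actually supply a proof of this proposition: it simply records that the formula is obtained by specializing Proposition~2.3 of \cite{Gri2} to $W=G(r,1,n)$ and translating the parameters via $c_l=\frac{1}{r}\sum_j \zeta^{-lj}d_j$---precisely the alternative you mention in your parenthetical. Your self-contained induction on $|\mu|$, driven by the Leibniz rule $\partial(fg)=\partial(f)\,g+w(f)\,\partial(g)$ for the twisted divided difference and the commutation of $e_{i,m}$ past $x^\nu$, is the standard way to establish such identities directly and is entirely sound; the only thing left implicit is the routine but slightly tedious bookkeeping in the $k=j$ case, which you have flagged accurately. What your approach buys over the citation is self-containment; what the citation buys is brevity and a uniform statement valid for arbitrary complex reflection groups.
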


In \cite{DuOp} section 3, Dunkl and Opdam defined elements
\begin{equation}
z_i=y_i x_i+c_0 \phi_i \quad \text{where} \quad \phi_i=\sum_{1 \leq j <i} \sum_{l=0}^{r-1} t_{\zeta_i^l s_{ij} \zeta_i^{-l}}
\end{equation} and proved that they are pairwise commutative.  By use of \ref{cf3} we have
\begin{equation}
z_i=x_i y_i+\kappa-\sum_{j=0}^{r-1} (d_j-d_{j-1}) e_{ij} -c_0 \sum_{\substack{i < j \leq n \\ 0 \leq l \leq r-1}} t_{\zeta_i^l s_{ij} \zeta_i^{-l}}.
\end{equation}  Let $\ttt$ be the (commutative) subalgebra of $\HH$ generated by $z_1,\dots,z_n$ and $t_{\zeta_1},\dots,t_{\zeta_n}$.  If $\alpha: \ttt \rightarrow F$ is an $F$-algebra homomorphism and $M$ is an $\HH$-module, define the $\alpha$-\emph{weight space} $M_\alpha$ by
\begin{equation}
M_\alpha=\{m \in M \ | \ \hbox{there is $q \in \ZZ_{>0}$ such that }(f-\alpha(f))^q.m=0 \quad \hbox{for all $f \in \ttt$.} \}
\end{equation}   If $v \in M$, we say that $v$ has $\ttt$-weight $(\alpha_1,\dots,\alpha_n,\zeta^{\beta_1},\dots,\zeta^{\beta_n})$ if
\begin{equation} \label{weight conditions}
z_i.v=\alpha_i v \quad \text{and} \quad t_{\zeta_i}.v=\zeta^{\beta_i} v 
\quad \hbox{for $1 \leq i \leq n$.}
\end{equation}  If $v \neq 0$ then the sequence $(\alpha_1,\dots,\alpha_n,\zeta^{\beta_1},\dots,\zeta^{\beta_n})$ is determined by \eqref{weight conditions}.  As observed in \cite{Dez}, the subalgebra of $\HH$ generated by $\ttt$ and $\CC W$ is the generalized graded affine Hecke algebra for $G(r,1,n)$ introduced by Ram and Shepler in \cite{RaSh}.

The \emph{intertwining operators} $\sigma_i$ are the operators
\begin{equation} \label{sigma def}
\sigma_i=t_{s_i}+\frac{c_0}{z_i-z_{i+1}} \pi_i \quad \hbox{for $1 \leq i \leq n-1$.}
\end{equation}  The operator $\sigma_i$ is defined on those $\ttt$-weights spaces $M_\alpha$ on which $z_i-z_{i+1}$ is invertible or $\pi_i$ is zero.  They satisfy the relations
\begin{equation}
z_i \sigma_i=\sigma_i z_{i+1}, \ z_i \sigma_j=\sigma_j z_i \quad \hbox{if $j \neq i-1,i$,}
\end{equation}
\begin{equation}
t_{\zeta_i} \sigma_i=\sigma_i t_{\zeta_{i+1}}, \ t_{\zeta_i} \sigma_j=\sigma_j t_{\zeta_i} \quad \hbox{if $j \neq i-1,i$,}
\end{equation}
\begin{equation}
\sigma_i \sigma_{i+1} \sigma_i=\sigma_{i+1} \sigma_i \sigma_{i+1}, \quad \text{and} \quad \sigma_i^2=\frac{(z_i-z_{i+1}-c_0 \pi_i)(z_i-z_{i+1}+c_0 \pi_i)}{(z_i-z_{i+1})^2},
\end{equation} for $1 \leq i \leq n-2$ and $1 \leq i \leq n-1$, all of which can be checked by straightforward (sometimes lengthy) calculations.

We also define the intertwining operators $\Phi$ and $\Psi$ by
\begin{equation} \label{phipsi defs}
\Phi=x_n t_{s_{n-1} \cdots s_1} \quad \text{and} \quad \Psi=y_1 t_{s_1 \cdots s_{n-1}}.
\end{equation}  The intertwiner $\Phi$ was discovered by Knop and Sahi (\cite{KnSa}).   
Put
\begin{equation} \label{phi def 1}
\phi.(\alpha_1,\dots,\alpha_n,\zeta^{\beta_1},\dots,\zeta^{\beta_n})=(\alpha_2,\dots,\alpha_n,\alpha_1+\kappa-d_{\beta_1-1}+d_{\beta_1-2},\zeta^{\beta_2},\dots,\zeta^{\beta_n},\zeta^{\beta_1-1}). 
\end{equation}  Let $\psi=\phi^{-1}$, so that
\begin{equation}
\psi.(\alpha_1,\dots,\alpha_n,\zeta^{\beta_1},\dots,\zeta^{\beta_n})=((\alpha_n-\kappa+d_{\beta_n}-d_{\beta_n-1},\alpha_1,\dots,\alpha_{n-1},\zeta^{\beta_n+1},\zeta^{ \beta_1},\dots,\zeta^{\beta_{n-1}}).
\end{equation}  Let $S_n$ act on weights by simultaneously permuting the $\alpha_i$'s and $\zeta^{\beta_i}$'s.

The following proposition, proved in \cite{Gri2} Lemmas 5.2 and 5.3, records the properties satisfied by the intertwining operators $\Phi, \Psi$, and $\sigma_i$ for $1 \leq i \leq n-1$.  
\begin{proposition} \label{intertwiner properties}
Let $M$ be an $\HH$-module and suppose $m \in M$ has $\ttt$-weight $\text{wt}(m)=(\alpha_1,\dots,\alpha_n,\zeta^{\beta_1},\dots,\zeta^{\beta_n})$.  Then
\begin{enumerate}
\item[(a)]  If $\zeta^{\beta_i} \neq \zeta^{\beta_{i+1}}$ or $a_{i+1} \neq a_i$ then $\sigma_i.m$ is well-defined with weight
\begin{equation*}
\text{wt}(\sigma_i.m)=s_i.\text{wt}(m).
\end{equation*}  

\item[(b)] The weights of $\Phi.m$ and $\Psi.m$ are
\begin{equation*}
\text{wt}(\Phi.m)=\phi.\text{wt}(m) \quad \text{and} \quad \text{wt}(\Psi.m)=\psi.\text{wt}(m).
\end{equation*}

\item[(c)]  $\Psi \Phi=z_1$, and

\item[(d)] \begin{equation*}
\Phi \Psi=z_n-\kappa+\sum_{j=0}^{r-1} (d_j-d_{j-1}) e_{nj}.
\end{equation*} 
\end{enumerate}
\end{proposition}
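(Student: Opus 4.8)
The plan is to verify the four assertions by direct computation inside $\HH$, using the PBW theorem \eqref{PBW theorem}, the commutation relations \eqref{cf2}, \eqref{cf3}, Proposition \ref{comm formula for y and f}, and the known $\sigma_i$-relations listed just before \eqref{phipsi defs}. Parts (c) and (d) are algebra identities in $\HH$ (no module required), and I would establish these first, since (a) and (b) then follow by transporting weights through them.

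\textbf{Part (c).} First I would compute $\Psi\Phi = y_1 t_{s_1\cdots s_{n-1}} x_n t_{s_{n-1}\cdots s_1}$. The cycle $s_1\cdots s_{n-1}$ moves $x_n$ to $x_1$, so $t_{s_1\cdots s_{n-1}} x_n = x_1 t_{s_1\cdots s_{n-1}}$ (using $t_w x = (wx) t_w$); the trailing $t_{s_1\cdots s_{n-1}} t_{s_{n-1}\cdots s_1} = 1$ since $(s_1\cdots s_{n-1})(s_{n-1}\cdots s_1) = e$. Hence $\Psi\Phi = y_1 x_1$. But by \eqref{cf3} and the definition of $z_i$ one checks that actually $z_1 = y_1x_1 + c_0\phi_1 = y_1 x_1$ since $\phi_1 = 0$ (the empty sum), so $\Psi\Phi = z_1$ exactly. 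I should double-check the direction of the one-line permutation $c = (12\cdots n)$ versus $s_1\cdots s_{n-1}$ to be sure the conjugation by $t_{s_1\cdots s_{n-1}}$ sends $x_n\mapsto x_1$ and not $x_1\mapsto x_n$; this is the only place a sign/indexing slip can creep in.

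\textbf{Part (d).} Here $\Phi\Psi = x_n t_{s_{n-1}\cdots s_1} y_1 t_{s_1\cdots s_{n-1}}$. Now $t_{s_{n-1}\cdots s_1} y_1 = y_n t_{s_{n-1}\cdots s_1}$, and $t_{s_{n-1}\cdots s_1} t_{s_1\cdots s_{n-1}} = 1$, so $\Phi\Psi = x_n y_n$. Then I would use \eqref{cf3} in the form $x_n y_n = y_n x_n - \kappa + \sum_{j=0}^{r-1}(d_j - d_{j-1}) e_{nj} + c_0\sum_{j\neq n}\sum_l t_{\zeta_n^l s_{nj}\zeta_n^{-l}}$, and recognize the right side as $z_n - \kappa + \sum_j (d_j - d_{j-1}) e_{nj}$ after noting $z_n = y_n x_n + c_0\phi_n$ with $\phi_n = \sum_{j<n}\sum_l t_{\zeta_n^l s_{nj}\zeta_n^{-l}}$ (and all $j\neq n$ means $j<n$). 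So this is essentially bookkeeping with \eqref{cf3}.

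\textbf{Parts (a) and (b).} For (a), I would invoke the $\sigma_i$-commutation relations $z_i\sigma_i = \sigma_i z_{i+1}$, $z_{i+1}\sigma_i = \sigma_i z_i$, $z_j\sigma_i = \sigma_i z_j$ for $j\neq i, i+1$, and similarly for the $t_{\zeta_j}$; applying each side to $m$ and using $\text{wt}(m)$ gives $\text{wt}(\sigma_i m) = s_i.\text{wt}(m)$, provided $\sigma_i m$ is defined and nonzero. The well-definedness clause (either $\zeta^{\beta_i}\neq\zeta^{\beta_{i+1}}$, so $\pi_i$ acts as $0$ on $M_{\text{wt}(m)}$, or $\alpha_i\neq\alpha_{i+1}$, so $z_i - z_{i+1}$ is invertible on $M_{\text{wt}(m)}$) is exactly the condition stated; I should remark that when $\pi_i.m=0$ the operator $\sigma_i$ reduces to $t_{s_i}$ and the claim is immediate, and otherwise $(z_i-z_{i+1})^{-1}$ makes sense. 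For (b), I would compute $z_j\Phi.m$ and $t_{\zeta_j}\Phi.m$ using the commutation relations between $z_j$, $t_{\zeta_j}$ and $\Phi = x_n t_{s_{n-1}\cdots s_1}$; the key relations are that $\Phi$ shifts indices cyclically and the ``new'' weight in slot $n$ picks up the correction $\kappa - d_{\beta_1 - 1} + d_{\beta_1 - 2}$ coming from \eqref{cf3} when $y_n$ (inside some $z_j$) is commuted past the $x_n$ in $\Phi$. Concretely, I would show $z_i\Phi = \Phi z_{i+1}$ for $1\le i\le n-1$ and find the formula for $z_n\Phi$ in terms of $\Phi$, $z_1$, and the idempotents $e_{1,j}$, then evaluate on the weight vector $m$; the $t_{\zeta_j}$ part is easier. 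The formula $\psi = \phi^{-1}$ is then automatic from (c) and (d), or can be checked directly on weights.

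\textbf{Main obstacle.} The routine-but-error-prone core is part (b): tracking how $\Phi$ (which involves both the multiplication operator $x_n$ and the long permutation $s_{n-1}\cdots s_1$) intertwines the $z_i$'s, in particular getting the correction term $\kappa - d_{\beta_1-1} + d_{\beta_1-2}$ exactly right, with the correct indices on the $d$'s. This requires carefully commuting $y_n$ past $x_n$ via \eqref{cf3}, keeping track of which idempotent $e_{n,j}$ appears and how conjugation by $t_{s_{n-1}\cdots s_1}$ and the action of $t_{\zeta_i}$ translate it into the $e_{1,\cdot}$ appearing in $\phi.\text{wt}(m)$. Everything else is a short manipulation. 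Since the proposition is quoted from \cite{Gri2}, Lemmas 5.2–5.3, I would in the actual write-up cite those and only sketch the computation, flagging the index conventions (especially the direction of the cycle $c$ and the sign convention $w.\mu = (\mu_{w^{-1}(1)}, \ldots)$) that make the stated formulas come out as written.
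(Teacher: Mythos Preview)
Your proposal is correct. The paper itself does not give a proof of this proposition at all: it simply cites \cite{Gri2}, Lemmas 5.2 and 5.3, and states the result. So there is no ``paper's own proof'' to compare to, and your direct computation---first establishing the algebra identities (c) and (d) by conjugating $x_n$ and $y_1$ through the cycle, then reading off the weight transformations (a) and (b) from the commutation relations for $\sigma_i$, $\Phi$, $\Psi$ with the $z_j$ and $t_{\zeta_j}$---is exactly the standard argument one expects, and almost certainly what \cite{Gri2} does. Your caution about the indexing of the cycle $c = s_1\cdots s_{n-1}$ and the precise correction term in (b) is well placed; those are the only spots where a slip is likely.
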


\section{Spectrum of $M(\lambda)$.} \label{spectrum section}

Recall from Theorem \ref{CW modules theorem} that the irreducible $\CC W$-modules $S^{\lambda}$ are parametrized by $r$-partitions $\lambda$ of $n$.  Define the Verma module $M(\lambda)$ to be the induced module
\begin{equation}
M(\lambda)=\text{Ind}_{F W \otimes F[y_1,\dots,y_n]}^\HH S^{\lambda},
\end{equation} where by abuse of notation $S^{\lambda}$ is the $F W$-module obtained by extension of scalars to $F$ and we define the $F[y_1,\dots,y_n]$ action on $S^{\lambda}$ by
\begin{equation}
y_i.v=0 \quad \hbox{for $1 \leq i \leq n$ and $v \in S^{\lambda}$.}
\end{equation}  By the PBW theorem \eqref{PBW theorem} for $\HH$ we have an isomorphism of $F$-vector spaces
\begin{equation}
M(\lambda) \simeq F[x_1,\dots,x_n] \otimes_F S^{\lambda}.
\end{equation}  As usual let $\text{SYT}(\lambda)$ be the set of standard Young tableaux on $\lambda$.  We will show that for generic choices of the parameters $\kappa$ and $c_{l}$, the standard module $M(\lambda)$ has a basis of $\ttt$-eigenvectors indexed by the set $\ZZ_{\geq 0}^n \times \text{SYT}(\lambda)$ and we will calculate the eigenvalues explicitly.

We define a partial order on $\ZZ_{\geq 0}^n$ by 
\begin{equation}
\mu < \nu \quad \iff \quad \mu_+ <_d \nu_+ \quad \text{or} \quad \mu_+=\nu_+ \quad \text{and} \quad w_\mu < w_\nu,
\end{equation}  where we use the Bruhat order on $S_n$, and $<_d$ is dominance order on partitions:
\begin{equation}
\mu>_d \nu \quad \hbox{if $\mu-\nu=\sum_{i=1}^{n-1} k_i (\epsilon_i-\epsilon_{i+1})$ with $k_i \geq 0$ for $1 \leq i \leq n-1$.}
\end{equation}  If $\mu \in \ZZ_{\geq 0}^n$, $1 \leq i<j \leq n$, and $\mu_i>\mu_j$, then with respect to this partial order one has
\begin{equation} \label{order fact}
\mu > s_{ij}.\mu+k(\epsilon_i-\epsilon_j) \quad \hbox{for $0 \leq k <  \mu_i-\mu_j$.}
\end{equation}

Recall from \eqref{JM basis} that for any $\mu \in \ZZ_{\geq 0}^n$ the elements $v_T^\mu$ are a basis of $S^{\lambda}$.  The next theorem is the analogue of \cite{Opd} 2.6 in our setting, showing that the $z_i$'s are upper triangular as operators on $M(\lambda)$ with respect to the basis $x^\mu v_T^\mu$ of $M(\lambda)$ and the order on $\ZZ_{\geq 0}^n \times \text{SYT}(\lambda)$ defined by
\begin{equation}
(\mu,T)<(\nu,S) \quad \iff \quad \mu < \nu.
\end{equation}
\begin{theorem} \label{Upper triangular}
Let $\lambda$ be an $r$-partition of $n$, $\mu \in \ZZ_{\geq 0}^n$, and let $T$ be a standard tableau on $\lambda$.  Recall the definitions of $\beta$ and ct given in \eqref{content def} and \eqref{beta def}.
\begin{enumerate}
\item[(a)]  The action of $t_{\zeta_i}$ and $z_i$ on $M(\lambda)$ are given by
\begin{align*}
t_{\zeta_i}.x^\mu v_T^\mu=\zeta^{\beta(T(w_\mu(i)))-\mu_i} x^\mu v_T^\mu
\end{align*}
and
\begin{align*} \label{z spectrum}
z_i.x^\mu v_T^\mu&=\left( \kappa(\mu_i+1)-(d_{\beta(T(w_\mu(i)))}-d_{\beta(T(w_\mu(i)))-\mu_i-1})-c_0 r \text{ct}(T(w_\mu(i))) \right) x^\mu v_T^\mu \\
&+\sum_{(\nu,S) < (\mu,T)} c_{\nu,S} x^\nu v_{S}^\nu.
\end{align*}
\item[(b)]  Assuming that the parameters are generic, so $F=\CC(\kappa,c_0,d_1,d_2,\dots,d_{r-1})$, for each $\mu \in \ZZ_{\geq 0}^n$ and $T \in \text{SYT}(\lambda)$ there exists a unique $\ttt$ eigenvector $f_{\mu,T} \in M(\lambda)$ such that
\begin{equation*}
f_{\mu,T}=x^\mu v_T^\mu+\text{lower terms}.
\end{equation*}  
\end{enumerate}
The $\ttt$-eigenvalue of $f_{\mu,T}$ is determined by the formulas in part (a).
\end{theorem}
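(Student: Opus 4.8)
The plan is as follows. The formula for $t_{\zeta_i}$ is essentially immediate: since $\zeta_i$ scales $x_i$ by $\zeta^{-1}$ and fixes $x_j$ for $j\neq i$, one has $t_{\zeta_i}x^\mu=\zeta^{-\mu_i}x^\mu t_{\zeta_i}$, and combining this with the weight formula $t_{\zeta_i}.v_T^\mu=\zeta^{\beta(T(w_\mu(i)))}v_T^\mu$ of Section 3 gives the asserted eigenvalue $\zeta^{\beta(T(w_\mu(i)))-\mu_i}$. For $z_i$ I would start from the form $z_i=x_iy_i+\kappa-\sum_j(d_j-d_{j-1})e_{ij}-c_0\sum_{i<j,\,l}t_{\zeta_i^l s_{ij}\zeta_i^{-l}}$, apply it to $x^\mu v_T^\mu$, and use Proposition \ref{comm formula for y and f} to push $y_i$ to the right past $x^\mu$, where it annihilates $v_T^\mu$. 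Every resulting term has the shape $x^\nu u.v_T^\mu$ with $u\in\CC W$ and $|\nu|=|\mu|$, and re-expanding $u.v_T^\mu$ in the basis $\{v_S^\nu\}$ of $S^\lambda$ rewrites everything in the basis $x^\nu v_S^\nu$.

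The heart of the argument is to analyze the ``divided-difference'' expressions $x_i\cdot\dfrac{x^\mu-\zeta_i^l s_{ij}\zeta_i^{-l}.x^\mu}{x_i-\zeta^l x_j}$ produced by Proposition \ref{comm formula for y and f}. Expanding these into monomials $x^\nu$ (all of degree $|\mu|$), I would check, using \eqref{order fact}, that (i) every monomial $x^\nu$ with $\nu\neq\mu$ that occurs satisfies $\nu<\mu$, with the single exception that when $j>i$ and $\mu_j>\mu_i$ the extreme monomial is $x^{s_{ij}.\mu}$, which is not $<\mu$ — but its contribution cancels precisely against the matching piece of the explicit term $-c_0\sum_{i<j,\,l}t_{\zeta_i^l s_{ij}\zeta_i^{-l}}x^\mu v_T^\mu$; and (ii) the total coefficient of the diagonal monomial $x^\mu$ among all the reflection terms reassembles exactly into $-c_0\phi_i^\mu$, the modified Jucys--Murphy element of Section 3 — the summands with $j<i$, $\mu_j<\mu_i$ and with $j>i$, $\mu_j<\mu_i$ coming from the divided differences, and those with $j>i$, $\mu_j=\mu_i$ from the explicit term. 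Collecting the idempotent terms and using $e_{ij}x^\mu=x^\mu e_{i,\,j+\mu_i}$, a short manipulation gives
\[
-\sum_j d_j(e_{ij}-e_{i,\,j+\mu_i})-\sum_j(d_j-d_{j-1})e_{i,\,j+\mu_i}=-\sum_j(d_j-d_{j-\mu_i-1})e_{ij},
\]
so that in all
\[
z_i.x^\mu v_T^\mu=x^\mu\Bigl(\kappa(\mu_i+1)-\sum_j(d_j-d_{j-\mu_i-1})e_{ij}-c_0\phi_i^\mu\Bigr).v_T^\mu+\text{lower terms}.
\]
Finally I would invoke Theorem \ref{CW modules theorem}(a): $\phi_i^\mu.v_T^\mu=r\,\text{ct}(T(w_\mu(i)))\,v_T^\mu$, and since $v_T^\mu$ has $t_{\zeta_i}$-eigenvalue $\zeta^{\beta(T(w_\mu(i)))}$ the idempotent $e_{ij}$ fixes it when $j\equiv\beta(T(w_\mu(i)))$ mod $r$ and annihilates it otherwise, whence $\sum_j(d_j-d_{j-\mu_i-1})e_{ij}.v_T^\mu=(d_{\beta(T(w_\mu(i)))}-d_{\beta(T(w_\mu(i)))-\mu_i-1})v_T^\mu$. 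Since the diagonal contribution is thus a scalar multiple of $v_T^\mu$, the matrices are genuinely upper triangular with the stated eigenvalue on the diagonal, proving (a).

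For part (b), I would observe that each generator $z_i$, $t_{\zeta_i}$ of $\ttt$ preserves the $x$-degree, hence preserves the finite-dimensional subspace $M(\lambda)_d=F[x_1,\dots,x_n]_d\otimes S^\lambda$ for every $d$. By part (a), the commuting operators of $\ttt$ act on $M(\lambda)_d$ as simultaneously upper-triangular matrices in the basis $\{x^\mu v_T^\mu:|\mu|=d,\ T\in\text{SYT}(\lambda)\}$, with the weights of part (a) on the diagonal. Over $F=\CC(\kappa,c_0,d_1,\dots,d_{r-1})$ these diagonal weights are pairwise distinct — this is exactly the simplicity of the spectrum (Lemma \ref{simple spectrum}) — so $M(\lambda)_d$ is the direct sum of its generalized $\ttt$-weight spaces, each of dimension equal to the number of pairs $(\mu,T)$ bearing the given weight, namely one. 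Therefore each generalized weight space is an honest eigenspace, spanned by a unique vector whose expansion in the triangular basis has $x^\mu v_T^\mu$ as leading term with coefficient $1$; this vector is $f_{\mu,T}$, and triangularity forces its remaining terms to be lower. Its $\ttt$-eigenvalue is the corresponding diagonal entry, i.e.\ the one recorded in (a).

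The step I expect to be the main obstacle is the monomial analysis in (i)--(ii): verifying that every spurious monomial is genuinely lower in the order on $\ZZ_{\geq 0}^n$, that the ``upper'' monomials $x^{s_{ij}.\mu}$ cancel against the explicit reflection term, and — above all — recognizing that the surviving coefficient of $x^\mu$ is exactly the modified Jucys--Murphy element $\phi_i^\mu$. That identification is what ties the computation inside $\HH$ to the $\CC W$-representation theory of Section 3; once it is in place the rest is bookkeeping and, for (b), standard linear algebra modulo the simplicity of the spectrum.
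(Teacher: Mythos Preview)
Your approach to part (a) is correct and is essentially the paper's argument, with one organizational difference: you start from the expression $z_i=x_iy_i+\kappa-\sum_j(d_j-d_{j-1})e_{ij}-c_0\sum_{j>i,l}t_{\zeta_i^ls_{ij}\zeta_i^{-l}}$, whereas the paper uses the form $z_i=y_ix_i+c_0\phi_i$ and computes $y_i.x^{\mu+\epsilon_i}v_T^\mu$ directly via Proposition~\ref{comm formula for y and f}. Both routes lead to the same monomial analysis and the same identification of $-c_0\phi_i^\mu$ as the diagonal group-algebra coefficient; the paper's choice avoids the separate idempotent recombination you carry out, since the relevant $d$-terms already appear in the right shape in the commutation formula for $y_ix^{\mu+\epsilon_i}$. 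Your cancellation claim in (i) and the recognition of $\phi_i^\mu$ in (ii) are exactly the crux, and they are right.

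There is one genuine issue in part (b): your appeal to Lemma~\ref{simple spectrum} is circular. That lemma is stated and proved in Section~\ref{submodules section}, \emph{after} Theorem~\ref{Upper triangular}, under the standing hypothesis $\kappa=1$ with complex (not generic) parameters, and its proof uses the eigenvalue formulas of Theorem~\ref{Upper triangular}(a). What you actually need here is the simpler statement that over $F=\CC(\kappa,c_0,d_1,\dots,d_{r-1})$ the diagonal weights are pairwise distinct. The paper proves this directly inside the proof of (b): if two pairs $(\mu,T)$ and $(\nu,S)$ give the same weight, then comparing the coefficients of the algebraically independent parameters $\kappa$ and $c_0$ forces $\mu_i=\nu_i$ and $\text{ct}(T(w_\mu(i)))=\text{ct}(S(w_\nu(i)))$ for all $i$, and then the $t_{\zeta_i}$-eigenvalue forces $\beta(T(w_\mu(i)))=\beta(S(w_\nu(i)))$, whence $(\mu,T)=(\nu,S)$. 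Replace the reference to Lemma~\ref{simple spectrum} with this short coefficient-comparison argument and your proof of (b) is complete.
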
  
\begin{proof}
The statement about the action of $t_{\zeta_i}$ follows from the commutation relation in the definition of the rational Cherednik algebra and the definition of the representation $M(\lambda)$.  Using the commutation formula in Proposition \ref{comm formula for y and f} for $f \in \CC[x_1,\dots,x_n]$ and $y \in \hh$ and the geometric series formula to evaluate the divided differences, we obtain the following formula for the action of $y_i x_i$ on $x^\mu v_T^\mu$, in which $b_i=\beta(T(w_\mu(i)))$ and $a_i= r \text{ct}(T(w_\mu(i)))$:
\begin{align*}
y_i.x^{\mu+\epsilon_i}v_T^\mu&=\left(\kappa(\mu_i+1) x^{\mu}-c_0 \sum_{j \neq i} \sum_{l=0}^{r-1}  \frac{x^{\mu+\epsilon_i}-\zeta_i^l s_{ij} \zeta_i^{-l} x^{\mu+\epsilon_i}}{x_i-\zeta^l x_j} t_{\zeta_i^l s_{ij} \zeta_i^{-l}} 
 -\sum_{j=0}^{r-1} d_j x^{\mu} (e_{i,j}-e_{i,j+\mu_i+1}) \right) v_T^\mu \\
 &=\Big( \kappa(\mu_i+1)x^\mu-(d_{b_i}-d_{b_i-\mu_i-1}) x^\mu +c_0 \sum_{\substack{j \neq i \\ \mu_j > \mu_i \\ 0 \leq l \leq r-1}} \sum_{k=1}^{\mu_j-\mu_i-1} \zeta^{-lk} x^{\mu+k(\epsilon_i-\epsilon_j)} t_{\zeta_i^l s_{ij} \zeta_i^{-l}} \\
 &- c_0 \sum_{\substack{j \neq i \\ \mu_i \geq \mu_j \\ 0 \leq l \leq r-1 }} (x^\mu+ \zeta^l x^{\mu+(\epsilon_j-\epsilon_i)}+ \cdots +\zeta^{l(\mu_i-\mu_j)} x^{ s_{ij} \mu} ) t_{\zeta_i^l s_{ij} \zeta_i^{-l}}  \Big) v_T^\mu.
\end{align*}

Using this equation and \eqref{order fact} to identify lower terms,
\begin{align*}
z_i .x^\mu v_T^\mu&=\left(y_i x_i +c_0 \sum_{\substack{1 \leq j <i \\ 0 \leq l \leq r-1}} t_{\zeta_i^l s_{ij} \zeta_i^{-l}} \right) x^\mu v_T^\mu
=y_i. x^{\mu+\epsilon_i}v_T^\mu+c_0 \sum_{\substack{1 \leq j < i \\ 0 \leq l \leq r-1}} t_{\zeta_i^{l} s_{ij} \zeta_i^{-l}}. x^\mu v_T^\mu \\
&=\Big( (\kappa(\mu_i+1)-(d_{b_i}-d_{b_i-\mu_i-1})) x^{\mu}
 - c_0 \sum_{\substack{1 \leq j < i \\ \mu_j \leq \mu_i \\ 0 \leq l \leq r-1}} \zeta^{l(\mu_i-\mu_j)} x^{ s_{ij} \mu} t_{\zeta_i^l s_{ij} \zeta_i^{-l}} \\
 & - c_0 \sum_{\substack{ 1 \leq j < i \\ \mu_j < \mu_i \\ 0 \leq l \leq r-1}} x^\mu t_{\zeta_i^l s_{ij} \zeta_i^{-l}}  
 - c_0 \sum_{\substack{ i < j \leq n \\ \mu_j \leq \mu_i \\ 0 \leq l \leq r-1}}x^\mu t_{\zeta_i^l s_{ij} \zeta_i^{-l}} 
 +c_0  \sum_{\substack{1 \leq j < i \\ 0 \leq l \leq r-1}} \zeta^{l(\mu_i-\mu_j)} x^{ s_{ij} \mu} t_{\zeta_i^l s_{ij} \zeta_i^{-l}} \Big ) v_T^\mu \\
  &+ \text{lower terms} \\
 &= \Big( (\kappa(\mu_i+1)-(d_{b_i}-d_{b_i-\mu_i-1})) x^{\mu}
 - c_0 x^\mu \phi_i^\mu
 +c_0  \sum_{\substack{1 \leq j < i \\ \mu_j > \mu_i \\ 0 \leq l \leq r-1}} \zeta^{l(\mu_i-\mu_j)} x^{ s_{ij} \mu} t_{\zeta_i^l s_{ij} \zeta_i^{-l}} \Big ) v_T^\mu \\
  &+ \text{lower terms} \\
  &= \left( \kappa(\mu_i+1)-(d_{b_i}-d_{b_i-\mu_i-1})
 - c_0  a_i
 \right) x^\mu v_T^\mu + \text{lower terms}.
\end{align*}  This proves part (a).

For part (b), we will prove using part (a) that the (a priori generalized) $\ttt$-eigenspaces on $M(\lambda)$ are one-dimensional.  Suppose there are $\mu,\nu \in \ZZ_{\geq 0}^n$ and $T,S \in \text{SYT}(\lambda)$ with
\begin{equation} \label{b equation1}
\zeta^{\beta(T(w_\mu(i)))-\mu_i}=\zeta^{\beta(T(v_\nu(i)))-\nu_i},
\end{equation} and
\begin{align*}
\kappa(\mu_i+1)&-(d_{\beta(T(w_\mu(i)))}-d_{\beta(T(w_\mu(i)))-\mu_i-1})-c_0 r \text{ct}(T(w_\mu(i))) \\
&=\kappa(\nu_i+1)- (d_{\beta(S(v_\nu(i)))}-d_{\beta(S(v_\nu(i)))-\nu_i-1})-c_0 r \text{ct}(S(v_\nu(i)))
\end{align*} for $1 \leq i \leq n$.  By comparing coefficients of $\kappa$ in the equation above we find $\mu_i=\nu_i$ for $1 \leq i \leq n$.  Next, comparing coefficients of $c_0$ implies that $\text{ct}(T(i))=\text{ct}(S(i))$ for $1 \leq i \leq n$.  Finally \eqref{b equation1} implies that the sequences $\zeta^{\beta(T(1))},\dots,\zeta^{\beta(T(n))}$ and $\zeta^{\beta(S((1))},\dots,\zeta^{\beta(S(n))}$ are equal, and part (b) follows.  
\end{proof}

As a simple application of Theorem~\ref{Upper triangular}, we show how dominance order and certain $m$-cores arise naturally from the representation theory of the rational Cherednik algebra of type $G(1,1,n)$.  It seems likely that the orders described in Gordon's paper \cite{Gor2} arise in this way for $r>1$; if this is the case then a positive answer to Question 10.1 of Gordon's paper should follow.  For $c_0 \notin \ZZ+\frac{1}{2}$, the next corollary follows from Rouquier's work \cite{Rou} and the corresponding result for the $q$-Schur algebra.  

\begin{corollary}
Suppose $r=1$ and $c_0=k/m$ for relatively prime positive integers $k$ and $m$.  If $L(\mu)$ occurs as a composition factor of $M(\lambda)$ then $\mu \leq_d \lambda$ and the $m$-core of $\mu$ is the same as the $m$-core of $\lambda$.  
\end{corollary}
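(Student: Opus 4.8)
The plan is to read everything off from the eigenvalue formula of Theorem~\ref{Upper triangular}(a), specialized to $r=1$: there the $t_{\zeta_i}$ act trivially, we may take $\kappa=1$ (rescaling the $y_i$, which only rescales $c_0$), and the formula becomes
\begin{equation*}
z_i.(x^\mu v_T^\mu)=\bigl((\mu_i+1)-c_0\,\text{ct}(T(w_\mu(i)))\bigr)\,x^\mu v_T^\mu+(\text{lower terms}).
\end{equation*}
Hence the $z_i$ are simultaneously upper triangular in the basis $x^\mu v_T^\mu$ with respect to any linear refinement of the order of Theorem~\ref{Upper triangular}, so $M(\lambda)$ has a flag of $\ttt$-stable subspaces whose successive quotients afford the characters $\chi^\lambda_{\mu,T}\colon z_i\mapsto(\mu_i+1)-c_0\,\text{ct}(T(w_\mu(i)))$. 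Consequently the $\ttt$-characters occurring in any $\HH$-subquotient of $M(\lambda)$, counted with multiplicity, form a sub-multiset of $\{\chi^\lambda_{\mu,T}\ :\ \mu\in\ZZ_{\geq 0}^n,\ T\in\text{SYT}(\lambda)\}$.

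Now suppose $[M(\lambda):L(\mu)]\neq 0$. Since $L(\mu)$ is the head of $M(\mu)$ and $\text{rad}\,M(\mu)$ lies in positive degrees, the bottom graded piece of $L(\mu)$ is $S^\mu$, on which $\ttt$ affords precisely the characters $\chi^\mu_{0,T}\colon z_i\mapsto 1-c_0\,\text{ct}(T(w_0(i)))$ with $T\in\text{SYT}(\mu)$ (the case $\mu=0$, where $w_\mu=w_0$); and $\ttt$ acts the same way on this $S^\mu$ viewed as a subquotient of $M(\lambda)$. Fix one such $T$. By the first paragraph $\chi^\mu_{0,T}=\chi^\lambda_{\nu,S}$ for some $\nu\in\ZZ_{\geq 0}^n$ and $S\in\text{SYT}(\lambda)$, and comparing $i$-th coordinates gives
\begin{equation*}
\nu_i=c_0\bigl(\text{ct}(S(w_\nu(i)))-\text{ct}(T(w_0(i)))\bigr),\qquad 1\le i\le n.
\end{equation*}
As $i\mapsto T(w_0(i))$ and $i\mapsto S(w_\nu(i))$ enumerate the boxes of $\mu$ and of $\lambda$, this defines a bijection $\phi\colon\text{boxes}(\mu)\to\text{boxes}(\lambda)$ with $\text{ct}(\phi(b))-\text{ct}(b)\in\ZZ_{\geq 0}$ for all $b$; summing, $\sum_{b\in\lambda}\text{ct}(b)\ge\sum_{b\in\mu}\text{ct}(b)$, with equality exactly when $\nu=0$, i.e. $\mu=\lambda$.

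Two conclusions follow from $\phi$. Writing $c_0=k/m$ with $\gcd(k,m)=1$ and $k,m>0$, the displayed identity forces $m\mid\text{ct}(S(w_\nu(i)))-\text{ct}(T(w_0(i)))$ for every $i$, so $\phi$ preserves box-contents modulo $m$; hence $\mu$ and $\lambda$ have the same multiset of contents mod $m$, and since deleting a rim $m$-hook removes exactly one box of each residue class mod $m$, this multiset determines the $m$-core, so the $m$-core of $\mu$ equals that of $\lambda$. On the other hand $\text{ct}(\phi(b))\ge\text{ct}(b)$ for all $b$ means the content multiset of $\lambda$ stochastically dominates that of $\mu$, i.e. $\#\{b\in\lambda:\text{ct}(b)\ge t\}\ge\#\{b\in\mu:\text{ct}(b)\ge t\}$ for every $t$; by the elementary combinatorial fact that dominance order on partitions of $n$ coincides with this content-majorization order, $\mu\leq_d\lambda$.

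The step I expect to be the real obstacle is this last implication — that a content-weakly-increasing bijection $\text{boxes}(\mu)\to\text{boxes}(\lambda)$ forces $\mu\leq_d\lambda$, i.e. the identification of content majorization with dominance order. One direction is just the chain of single-box raising moves defining the covers of dominance order, each of which replaces the content of a box by a strictly larger value; the reverse direction needs an exchange argument comparing the boundaries of $\mu$ and $\lambda$. Everything else is bookkeeping with Theorem~\ref{Upper triangular}(a) together with the modular arithmetic that becomes available only once $c_0$ is specialized to $k/m$; note in particular that only part (a), valid for all parameters, is used, not the genericity statement (b).
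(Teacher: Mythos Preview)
Your proof is correct and follows essentially the same approach as the paper's: match the $\ttt$-weights of $S^\mu\subseteq L(\mu)$ against those appearing in $M(\lambda)$ via Theorem~\ref{Upper triangular}(a), obtain a content-nondecreasing bijection between the boxes of $\mu$ and those of $\lambda$, and read off both the $m$-core statement (the paper cites \cite{JaKe}, Theorem 2.7.41, for what you sketch about rim $m$-hooks) and the dominance statement. The step you flag as the real obstacle---that a content-nondecreasing bijection of boxes forces $\mu\leq_d\lambda$---is exactly the step the paper also asserts without proof (``whence $\lambda\geq_d\mu$''); it is indeed true (e.g.\ if $k$ is minimal with $\sum_{i\le k}\lambda_i<\sum_{i\le k}\mu_i$ then $\lambda_k<\mu_k$, and setting $t=\lambda_k-k+1$ one checks directly that $C_t(\lambda)=\sum_{i\le k}(\lambda_i-\lambda_k+k-i)<\sum_{i\le k}(\mu_i-\lambda_k+k-i)\le C_t(\mu)$), so your concern is well placed but not fatal.
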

\begin{proof}
If $L(\mu)$ occurs as a composition factor of $M(\lambda)$, then the $\ttt$-weights of the subspace $S^\mu \subseteq M(\mu)$ must occur among the $\ttt$-weights of $M(\lambda)$.  Therefore by part (a) of Theorem~\ref{Upper triangular} there exist integers $\mu_i \in \ZZ_{\geq 0}$ and orderings $b_1,b_2,\dots,b_n$ and $b'_1,b'_2,\dots,b'_n$ of the boxes of $\lambda$ and $\mu$ so that
\begin{equation}
\mu_i=\left(\text{ct}(b_i)-\text{ct}(b'_i) \right) k/m.
\end{equation}  The first consequence is that $\text{ct}(b_i)=\text{ct}(b_i')$ mod $m$ for $1 \leq i \leq n$, which implies that the $m$-cores of $\lambda$ and $\mu$ are equal by Theorem 2.7.41 of \cite{JaKe}.  The second consequence is that $\text{ct}(b_i) \geq \text{ct}(b_i')$ for $1 \leq i \leq n$, whence $\lambda \geq_d \mu$.  
\end{proof}

The simultaneous eigenfunctions $f_{\mu,T}$ are a generalization of the non-symmetric Jack polynomials: in the special case when $W=G(1,1,n)$ and $\lambda=(n)$ one obtains the usual non-symmetric Jack polynomials, and when $W=G(r,1,n)$ and $\lambda=(n),\emptyset,\dots,\emptyset$ one obtains the polynomials discovered in section 3 of \cite{DuOp} (which are a slight modification of non-symmetric Jack polynomials).  In Section~\ref{Norm section}, we introduce a certain inner product on $M(\lambda)$ such that $z_i$ is self-adjoint and $t_{\zeta_i}$ is unitary.  With respect to this inner product, the functions $f_{\mu,T}$ are pairwise orthogonal since they have distinct $\ttt$-eigenvalues.  For non-trivial $r$-partitions $\lambda$ one therefore obtains new orthogonal functions, and it should be interesting to investigate their properties, especially for the case of the symmetric group $G(1,1,n)$.

For $(\mu_1,\dots,\mu_n) \in \ZZ_{\geq 0}^n$, define
\begin{equation} \label{phi def 2}
\phi.(\mu_1,\mu_2,\dots,\mu_n)=(\mu_2,\mu_3,\dots,\mu_1+1) \quad \text{and} \quad \psi.(\mu_1,\dots,\mu_n)=(\mu_n-1,\mu_1,\dots,\mu_{n-1}).
\end{equation}  These operators on $\ZZ^n$ will turn out to correspond to the intertwiners $\Phi$ and $\Psi$.  The following fundamental lemma describes how the intertwining operators act on the basis $f_{\mu,T}$ of $M(\lambda)$.  If the parameters are specialized in such a way that the spectrum of $M(\lambda)$ remains simple, then it allows one to give an explicit description of the submodule structure of $M(\lambda)$: see Theorem \ref{submodules theorem}.  We will also use it to prove the norm formula for $f_{\mu,T}$ in Theorem \ref{norm formula}.

\begin{lemma} \label{action lemma}
Let $\mu \in \ZZ_{\geq 0}^n$ and let $T$ be a standard Young tableau on $\lambda$.
\begin{enumerate}
\item[(a)] Suppose $\mu_i \neq \mu_{i+1}$.  If $\mu_i<\mu_{i+1}$ or $\mu_i-\mu_{i+1} \neq \beta(T(w_\mu(i)))-\beta(T(w_\mu(i+1)))$ mod $r$ then
\begin{equation*}
\sigma_i.f_{\mu,T}=f_{s_i.\mu,T}.
\end{equation*}
\item[(b)]  If $\mu_i>\mu_{i+1}$ and $\mu_i-\mu_{i+1} =\beta(T(w_\mu(i)))-\beta(T(w_\mu(i+1)))$ mod $r$ then
\begin{equation*}
\sigma_i.f_{\mu,T}=\frac{(\delta-rc_0)(\delta+rc_0)}{\delta^2} f_{s_i \mu,T},
\end{equation*} where
\begin{equation*}
\delta=\kappa(\mu_{i}-\mu_{i+1})-(d_{\beta(T(w_\mu(i)))}-d_{\beta(T(w_\mu(i+1)))})-c_0 r (\text{ct}(T(w_\mu(i)))-\text{ct}(T(w_\mu(i+1)))).
\end{equation*}
\item[(c)] Put $j=w_\mu(i)$.  If $\mu_i=\mu_{i+1}$ then
\begin{equation*}
\sigma_i.f_{\mu,T}=\begin{cases}  0 \quad &\hbox{if $s_{j-1}.T$ is not a standard tableau,} \\
f_{\mu,s_{j-1}.T} \quad &\hbox{if $\zeta^{\beta(T(j))}\neq \zeta^{\beta(T(j-1))}$,} \\ 
\left(1-\left( \frac{1}{\text{ct}(T(j-1))-\text{ct}(T(j))} \right)^2 \right)^{1/2} f_{\mu,s_{j-1}.T} \quad &\text{else.} \end{cases}
\end{equation*} 
\item[(d)]  For all $\mu \in \ZZ_{\geq 0}^n$,
\begin{equation*}
\Phi.f_{\mu,T}=f_{\phi.\mu,T}.
\end{equation*}
\item[(e)]  For all $\mu \in \ZZ_{\geq 0}^n$,
\begin{equation*}
\Psi.f_{\mu,T}=\begin{cases}  \left(\kappa \mu_n-(d_{\beta(T(w_\mu(n)))}-d_{\beta(T(w_\mu(n)))-\mu_n})-r \text{ct}(T(w_\mu(n))) c_0 \right) f_{\psi.\mu,T}\quad &\hbox{if $\mu_n>0$,} \\ 0 \quad &\hbox{if $\mu_n=0$.} \end{cases}
\end{equation*}
\end{enumerate}
\end{lemma}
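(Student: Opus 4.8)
The plan is to prove all five parts by a single mechanism. Proposition~\ref{intertwiner properties} says exactly how $\sigma_i$, $\Phi$, $\Psi$ move $\ttt$-weights, and simplicity of the spectrum means a nonzero $\ttt$-weight vector of a prescribed weight is unique up to scalar. So in each case I first identify the weight of the image, conclude it is a scalar multiple of the appropriate $f_{\nu,S}$ (or $0$, when that weight does not occur in $M(\lambda)$), and then pin down the scalar — either by comparing leading terms in the basis $x^\mu v_T^\mu$, or by composing two intertwiners.

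\textbf{Preliminaries.} I would begin with three bookkeeping facts. (i) Since $\pi_i=\sum_{l=0}^{r-1}(t_{\zeta_i}t_{\zeta_{i+1}}^{-1})^l$, it acts on any $\ttt$-weight vector by $r$ when its $t_{\zeta_i}$- and $t_{\zeta_{i+1}}$-eigenvalues agree and by $0$ otherwise; for $f_{\mu,T}$, writing $\beta_i=\beta(T(w_\mu(i)))-\mu_i$, this is precisely the dichotomy $\mu_i-\mu_{i+1}\equiv\beta(T(w_\mu(i)))-\beta(T(w_\mu(i+1)))\pmod r$ or not, and when the congruence holds the $d_{\beta(\cdot)-\mu-1}$ terms in the $z_i$- and $z_{i+1}$-eigenvalues of $f_{\mu,T}$ (from Theorem~\ref{Upper triangular}(a)) cancel, so that $(z_i-z_{i+1})f_{\mu,T}=\delta f_{\mu,T}$ with $\delta$ as in part~(b). (ii) $\pi_i$ rescales each basis vector $x^\nu v_S^\nu$, and $z_i-z_{i+1}$ is triangular, so both stabilize $\text{span}\{x^\nu v_S^\nu:\nu\leq\mu\}=\text{span}\{f_{\nu,S}:\nu\leq\mu\}$ and so does the inverse of $z_i-z_{i+1}$; this prevents the divided-difference part $\tfrac{c_0}{z_i-z_{i+1}}\pi_i f_{\mu,T}$ of $\sigma_i f_{\mu,T}$ from contributing a higher leading term. (iii) Using $w_{\phi.\mu}=w_\mu c$, its inverse $w_{\psi.\mu}=w_\mu c^{-1}$, $w_{s_i.\mu}=w_\mu s_i$, and $w_\mu(i+1)=w_\mu(i)-1$ whenever $\mu_i=\mu_{i+1}$, a direct comparison of \eqref{phi def 1} and \eqref{phi def 2} with Theorem~\ref{Upper triangular}(a) shows $\phi.\text{wt}(f_{\mu,T})=\text{wt}(f_{\phi.\mu,T})$; $\psi.\text{wt}(f_{\mu,T})=\text{wt}(f_{\psi.\mu,T})$ if $\mu_n>0$ and otherwise is not a weight of $M(\lambda)$; $s_i.\text{wt}(f_{\mu,T})=\text{wt}(f_{s_i.\mu,T})$ if $\mu_i\neq\mu_{i+1}$; and, with $j=w_\mu(i)$, $s_i.\text{wt}(f_{\mu,T})=\text{wt}(f_{\mu,s_{j-1}.T})$ if $\mu_i=\mu_{i+1}$ and $s_{j-1}.T$ is standard, while if $s_{j-1}.T$ is not standard this weight does not occur in $M(\lambda)$. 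The non-occurring cases can be recognized because the $\kappa$-coefficients of a weight recover the exponent (exactly as in the proof of Theorem~\ref{Upper triangular}(b)) and the content and $\beta$ data then determine the tableau.

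\textbf{Parts (d) and (e).} Since $\Phi=x_n t_{s_{n-1}\cdots s_1}=x_n t_{c^{-1}}$, a direct computation gives $\Phi(x^\nu v_S^\nu)=x^{\phi.\nu}v_S^{\phi.\nu}$ for every $(\nu,S)$ (via $t_{c^{-1}}v_S^\nu=v_S^{\phi.\nu}$ and $x_n x^{c^{-1}.\nu}=x^{\phi.\nu}$); because $\phi$ is injective on exponents the coefficient of $x^{\phi.\mu}v_T^{\phi.\mu}$ in $\Phi f_{\mu,T}$ is $1$, so $\Phi f_{\mu,T}\neq 0$, and by the weight bookkeeping $\Phi f_{\mu,T}=f_{\phi.\mu,T}$. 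For (e) with $\mu_n>0$, apply the relation $\Psi\Phi=z_1$ of Proposition~\ref{intertwiner properties}(c) to $f_{\psi.\mu,T}$, which exists and satisfies $\Phi f_{\psi.\mu,T}=f_{\mu,T}$ by (d): this gives $\Psi f_{\mu,T}=z_1 f_{\psi.\mu,T}$, and evaluating the $z_1$-eigenvalue of $f_{\psi.\mu,T}$ by Theorem~\ref{Upper triangular}(a), with $w_{\psi.\mu}(1)=w_\mu(n)$ and $(\psi.\mu)_1=\mu_n-1$, reproduces the stated scalar. For $\mu_n=0$, $\psi.\text{wt}(f_{\mu,T})$ is not a weight of $M(\lambda)$, so $\Psi f_{\mu,T}=0$.

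\textbf{Parts (a), (b), (c), and the main obstacle.} In each of (a)--(c), $\sigma_i f_{\mu,T}$ is well-defined ($z_i-z_{i+1}$ is invertible on $\CC f_{\mu,T}$ unless $\pi_i f_{\mu,T}=0$), and the bookkeeping identifies it as a scalar multiple of the stated $f_{\nu,S}$, or $0$ when the target tableau is not standard. In (a) one has $\sigma_i f_{\mu,T}=t_{s_i}f_{\mu,T}$ plus (only when $\mu_i<\mu_{i+1}$) a term in $\text{span}\{f_{\nu,S}:\nu\leq\mu\}$, which excludes $f_{s_i.\mu,T}$; in either situation the coefficient of $x^{s_i.\mu}v_T^{s_i.\mu}$ comes only from $t_{s_i}(x^\mu v_T^\mu)=x^{s_i.\mu}v_T^{s_i.\mu}$ and equals $1$, so $\sigma_i f_{\mu,T}=f_{s_i.\mu,T}$. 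In (b), $(s_i.\mu,T)$ satisfies the hypotheses of (a), so $\sigma_i f_{s_i.\mu,T}=f_{\mu,T}$ and hence $\sigma_i^2 f_{\mu,T}=c\,f_{\mu,T}$; comparing with $\sigma_i^2=\tfrac{(z_i-z_{i+1}-c_0\pi_i)(z_i-z_{i+1}+c_0\pi_i)}{(z_i-z_{i+1})^2}$ applied to $f_{\mu,T}$, on which $z_i-z_{i+1}$ acts by $\delta$ and $\pi_i$ by $r$, gives $c=\tfrac{(\delta-rc_0)(\delta+rc_0)}{\delta^2}$. In (c), using $w_\mu s_i w_\mu^{-1}=s_{j-1}$ rewrite $t_{s_i}(x^\mu v_T^\mu)=x^\mu t_{w_\mu}^{-1}t_{s_{j-1}}v_T$, expand $t_{s_{j-1}}v_T=\tau_{j-1}v_T-\tfrac{1}{\phi_{j-1}-\phi_j}\pi_{j-1}v_T$ via Theorem~\ref{CW modules theorem}, and add $\tfrac{c_0}{z_i-z_{i+1}}\pi_i f_{\mu,T}=\tfrac{1}{\text{ct}(T(j-1))-\text{ct}(T(j))}f_{\mu,T}$ (valid when $\beta(T(j))=\beta(T(j-1))$); the $v_T^\mu$-terms cancel and the $x^\mu v_{s_{j-1}.T}^\mu$-coefficient becomes the $\tau_{j-1}$-scalar of Theorem~\ref{CW modules theorem}, the claimed formula; when $\zeta^{\beta(T(j))}\neq\zeta^{\beta(T(j-1))}$ one has $\pi_i f_{\mu,T}=\pi_{j-1}v_T=0$ and the same computation gives $f_{\mu,s_{j-1}.T}$; and when $s_{j-1}.T$ is not standard (so $\tau_{j-1}v_T=0$ and necessarily $\beta(T(j))=\beta(T(j-1))$) the target weight does not occur in $M(\lambda)$, forcing $\sigma_i f_{\mu,T}=0$. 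I expect the main obstacle to be the preliminary weight bookkeeping — matching \eqref{phi def 1}, with its $d_{\beta_1-1}-d_{\beta_1-2}$ correction, against \eqref{phi def 2} under the substitutions $w_{\phi.\mu}=w_\mu c$, etc., and establishing in tandem that $(\nu,S)\mapsto\text{wt}(f_{\nu,S})$ is injective with image the exact weight support of $M(\lambda)$ — since this is what lets the degenerate cases of (c) and (e) be settled by a weight argument alone.
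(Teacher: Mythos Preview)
Your proposal is correct and follows essentially the same approach as the paper: use Proposition~\ref{intertwiner properties} to identify the $\ttt$-weight of the image, invoke simple spectrum to reduce to determining a scalar, and then pin down that scalar either by a leading-term comparison in the basis $x^\mu v_T^\mu$ or via the relations $\sigma_i^2=\tfrac{(z_i-z_{i+1}-c_0\pi_i)(z_i-z_{i+1}+c_0\pi_i)}{(z_i-z_{i+1})^2}$ and $\Psi\Phi=z_1$. The only cosmetic differences are that you organize the weight bookkeeping more systematically up front, and in (a) for $\mu_i>\mu_{i+1}$ you compute the leading coefficient directly whereas the paper instead applies $\sigma_i^2=t_{s_i}^2=1$ (and in (b) you evaluate $\sigma_i^2$ on $f_{\mu,T}$ rather than $f_{s_i.\mu,T}$); both routes are equivalent.
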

\begin{proof}
The method of proof for parts (a), (b), and (c) of the lemma is the same: one checks that both sides of the equation are $\ttt$-eigenvectors with the same leading term and applies part (b) of Theorem \ref{Upper triangular}.  We use the fact that for generic parameters the intertwiners $\sigma_i$ are well-defined on all $f_{\mu,T}$'s, and that by Proposition \ref{intertwiner properties} an intertwiner applied to a $\ttt$-eigenvector is a $\ttt$-eigenvector if it is non-zero.

For (a) we observe that if $\mu_i<\mu_{i+1}$ then
\begin{align*}
\sigma_i.f_{\mu,T}&=(t_{s_i}+\frac{c_0 \pi_i}{z_i-z_{i+1}}).(x^\mu v_T^\mu+\text{lower terms})
=x^{s_i.\mu} t_{s_i}. v_T^\mu+\text{lower terms} \\
&=x^{s_i.\mu} t_{s_i w_\mu w_0}.v_T+\text{lower terms} 
=x^{s_i.\mu} v_T^{s_i.\mu}+\text{lower terms}.
\end{align*} This implies that $\sigma_i.f_{\mu,T}=f_{s_i.\mu,T}$.  On the other hand, if $\mu_i>\mu_{i+1}$ and $\mu_i-\beta(T(w_\mu(i))) \neq \mu_{i+1}-\beta(T(w_\mu(i+1)))$ mod $r$ then using the previous calculation gives
\begin{equation}
\sigma_i.f_{\mu,T}=\sigma_i^2 f_{s_i \mu, T}=t_{s_i}^2.f_{s_i\mu,T}=f_{s_i \mu, T}.
\end{equation}  This proves (a).

For (b) we assume that $\mu_i>\mu_{i+1}$ and  $\mu_i-\beta(T(w_\mu(i))) = \mu_{i+1}-\beta(T(w_\mu(i)))$ mod $r$ and compute using (a), Proposition \ref{Upper triangular}, and Proposition \ref{intertwiner properties},
\begin{align*}
\sigma_i.f_{\mu,T}=\sigma_i^2.f_{s_i \mu,T}
=\frac{(z_i-z_{i+1}-c_0 \pi_i)(z_i-z_{i+1}+c_0 \pi_i)}{(z_i-z_{i+1})^2}.f_{s_i.\mu,T} 
=\frac{(\delta-rc_0)(\delta+rc_0)}{\delta^2} f_{s_i \mu,T},
\end{align*} where
\begin{equation}
\delta=\kappa(\mu_{i}-\mu_{i+1})-(d_{\beta(T(w_\mu(i)))}-d_{\beta(T(w_\mu(i+1)))})-c_0 r (\text{ct}(T(w_\mu(i)))-\text{ct}(T(w_\mu(i+1))))
\end{equation} is the scalar by which $z_i-z_{i+1}$ acts on $f_{s_i.\mu,T}$.  This proves (b).

Assuming that $\mu_i=\mu_{i+1}$ and writing $w=w_\mu^{-1}$,  Theorem \ref{Upper triangular} and the formula in part (c) of Theorem \ref{CW modules theorem} for the action of the intertwiners $\tau_i$ on $S^\lambda$ give
\begin{align*}
\sigma_i.f_{\mu,T}&=\left(t_{s_i}+\frac{\pi_i c_0}{z_i-z_{i+1}} \right).\left(x^\mu v_T^\mu+\text{lower terms} \right) \\
&=x^\mu \left( t_{s_i}+\frac{c_0}{c_0 r \text{ct}(T(w_\mu(i+1)))-c_0 r \text{ct}(T(w_\mu(i)))} \pi_i \right).t_w.v_T+\text{lower terms} \\
&=x^\mu t_{w}.\left( t_{s_{w^{-1}(i)-1}}+\frac{1}{r \text{ct}(T(w^{-1}(i+1)))- r\text{ct}(T(w^{-1}(i)))} \pi_{w^{-1}(i)-1} \right).v_T+\text{lower terms} \\
&=x^\mu t_w \tau_{j-1} v_T+\text{lower terms}
\end{align*} where $j=w^{-1}(i)=w_\mu(i)$.  Combined with Theorem \ref{CW modules theorem} this proves (c).

For (d), since $\phi$ does behave as well as $s_i$ with respect to our order on $\ZZ_{\geq 0}^n$ one first checks that
\begin{equation} \label{phi wt equation}
\text{wt}(f_{\phi.\mu,T})=\phi.\text{wt}(f_{\mu,T}),
\end{equation} where $\phi.\mu$ is defined in \eqref{phi def 2} and $\phi.\text{wt}(f_{\mu,T})$ is defined in \eqref{phi def 1}.  This is a straightforward (but somewhat tedious) calculation that we omit.

Then the equation
\begin{equation}
\Phi.f_{\mu,T}=x_n t_{s_{n-1} s_{n-2} \cdots s_1}.(x^\mu v_T^\mu+\text{lower terms})=x^{\phi.\mu}  t_{s_{n-1} s_{n-2} \cdots s_1}.v_T^\mu+\sum_{\nu \neq \phi.\mu} c_{\nu,T'} x^\nu v_{T'}^\nu
\end{equation} implies the equality in (d).  

Turning to (e), we first observe that if $\mu_n=0$ then by part (b) of Proposition \ref{intertwiner properties}
\begin{align*}
z_1.\Psi.f_{\mu,T}&=\left(\kappa-(d_{\beta(T(w_\mu(n)))}-d_{\beta(T(w_\mu(n)))-1})-c_0 r \text{ct}(T(w_\mu(n)))-\kappa+d_{\beta(T(w_\mu(n)))}-d_{\beta(T(w_\mu(n)))-1} \right) \Psi.f_{\mu,T} \\
&=-c_0 r \text{ct}(T(w_\mu(n))) \Psi.f_{\mu,T}
\end{align*}  By part (a) of Theorem \ref{Upper triangular} the weights of $z_1$ on $M(\lambda)$ all have positive coefficient on $\kappa$, and it follows that $-c_0 r \text{ct}(T(w_\mu(n)))$ is not a weight of $z_1$ on $M(\lambda)$.  Hence $\Psi.f_{\mu,T}=0$.  If $\mu_n>0$ then we compute using part (d) and Proposition \ref{intertwiner properties}
\begin{align*}
\Psi.f_{\mu,T}&=\Psi.\Phi.f_{\psi.\mu,T}=z_1.f_{\psi.\mu} \\
&=\left(\kappa \mu_n-(d_{\beta(T(v_{\psi.\mu}(1)))}-d_{\beta(T(v_{\psi.\mu}(1)))-(\psi.\mu)_1-1})-c_0 r \text{ct}(T(v_{\psi.\mu}(1))) \right) f_{\psi.\mu,T} \\
&=\left(\kappa \mu_n-(d_{\beta(T(w_\mu(n)))}-d_{\beta(T(w_\mu(n)))-\mu_n})-c_0 r \text{ct}(T(w_\mu(n))) \right) f_{\psi.\mu,T} \ 
\end{align*} 
\end{proof}

As a first application of the Lemma, we prove a formula that should be useful in analyzing the restrictions $f_{\mu,T}$ to $x_i=x_{i+1}=\cdots=x_n=0$.  It will not be used in the rest of this paper.

\begin{corollary}
If $1 \leq i \leq n$ and $\mu_j=0$ for $j \geq i$ then
\begin{equation}
y_i.f_{\mu,T}=0. 
\end{equation}
\end{corollary}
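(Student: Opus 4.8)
The plan is to establish, by downward induction on $i$, the following statement that contains the claim: for every $1\le i\le n$, every $T\in\mathrm{SYT}(\lambda)$, and every $\mu\in\ZZ_{\ge0}^n$ with $\mu_i=\mu_{i+1}=\cdots=\mu_n=0$, one has $y_i.f_{\mu,T}=0$. This is not a mere degree triviality: by Theorem~\ref{Upper triangular} the lower terms occurring in $f_{\mu,T}$ need not be supported on the variables $x_1,\dots,x_{i-1}$, so $f_{\mu,T}$ is in general not a polynomial in $x_1,\dots,x_{i-1}$, and the argument must route through the intertwiner formulas of Lemma~\ref{action lemma}.

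For the base case $i=n$ I would rewrite $y_n$ via $\Psi$. Since $\Psi=y_1\,t_{s_1\cdots s_{n-1}}$ we get $y_1=\Psi\,t_{s_1\cdots s_{n-1}}^{-1}=\Psi\,t_{s_{n-1}\cdots s_1}$; as $(s_{n-1}\cdots s_1)(1)=n$, the relation $t_w y=(wy)t_w$ yields
\[
y_n \;=\; t_{s_{n-1}\cdots s_1}\,y_1\,t_{s_{n-1}\cdots s_1}^{-1}\;=\;t_{s_{n-1}\cdots s_1}\,\bigl(\Psi\,t_{s_{n-1}\cdots s_1}\bigr)\,t_{s_{n-1}\cdots s_1}^{-1}\;=\;t_{s_{n-1}\cdots s_1}\,\Psi .
\]
Hence if $\mu_n=0$ then $y_n.f_{\mu,T}=t_{s_{n-1}\cdots s_1}\bigl(\Psi.f_{\mu,T}\bigr)=0$ by part~(e) of Lemma~\ref{action lemma}.

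For the inductive step, fix $1\le i\le n-1$, assume the result for $i+1$, and take $\mu$ with $\mu_i=\mu_{i+1}=\cdots=\mu_n=0$. Using $y_i=t_{s_i}y_{i+1}t_{s_i}$ (valid since $s_i^2=1$), it suffices to see that $y_{i+1}\bigl(t_{s_i}.f_{\mu,T}\bigr)=0$, and for this I would show that $t_{s_i}.f_{\mu,T}$ already lies in $\mathrm{span}\{f_{\mu,S}\mid S\in\mathrm{SYT}(\lambda)\}$. Indeed, $\mu_i=\mu_{i+1}$ puts us in case~(c) of Lemma~\ref{action lemma}, so $\sigma_i.f_{\mu,T}$ is a scalar multiple of a single $f_{\mu,S}$; and from $\sigma_i=t_{s_i}+\frac{c_0}{z_i-z_{i+1}}\pi_i$ one has $t_{s_i}.f_{\mu,T}=\sigma_i.f_{\mu,T}-\frac{c_0}{z_i-z_{i+1}}\pi_i.f_{\mu,T}$, where $\pi_i$ (a sum of the elements $t_{\zeta_i}^l t_{\zeta_{i+1}}^{-l}\in\ttt$) and $z_i-z_{i+1}$ both act on the $\ttt$-eigenvector $f_{\mu,T}$ by scalars — the latter nonzero for generic parameters, since by Theorem~\ref{Upper triangular}(a) its eigenvalue is a polynomial in the parameters that does not vanish identically (the boxes $T(w_\mu(i))$ and $T(w_\mu(i+1))$ being distinct). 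Hence $t_{s_i}.f_{\mu,T}=\sum_S c_S f_{\mu,S}$, and each such $\mu$ still satisfies $\mu_{i+1}=\cdots=\mu_n=0$, so the inductive hypothesis gives $y_{i+1}.f_{\mu,S}=0$ for every $S$; therefore $y_{i+1}\bigl(t_{s_i}.f_{\mu,T}\bigr)=0$ and $y_i.f_{\mu,T}=0$. (The bottom case $\mu=0$ is also immediate on its own, since $f_{0,T}=v_T^0\in S^\lambda$ is annihilated by all of $y_1,\dots,y_n$.)

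There is no genuinely difficult step; the substance is the reduction of a statement about the degree-lowering operator $y_i$ — which does not preserve $\ttt$-eigenspaces — to the clean eigenvector identity of Lemma~\ref{action lemma}(e), using the conjugations $y_i=t_{s_i}y_{i+1}t_{s_i}$ and $y_n=t_{s_{n-1}\cdots s_1}\Psi$. The only points that require care are getting the last reduced-word identity right and verifying that $t_{s_i}$ maps $\mathrm{span}\{f_{\mu,S}\}_S$ into itself, which is exactly where one uses that $\ttt$ acts semisimply (so $\pi_i$ and $(z_i-z_{i+1})^{-1}$ become scalars on the eigenbasis).
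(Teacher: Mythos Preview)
Your proof is correct and follows essentially the same approach as the paper: both arguments use Lemma~\ref{action lemma}(c) together with the decomposition $t_{s_j}=\sigma_j-\tfrac{c_0}{z_j-z_{j+1}}\pi_j$ to show that the relevant $t_{s_j}$'s preserve $\mathrm{span}\{f_{\mu,S}\mid S\in\mathrm{SYT}(\lambda)\}$, and then reduce to Lemma~\ref{action lemma}(e) via $\Psi$. The only cosmetic difference is that you run a downward induction one simple reflection at a time using $y_i=t_{s_i}y_{i+1}t_{s_i}$, whereas the paper applies the full word $t_{s_{n-1}\cdots s_i}$ at once and uses the factorization $t_{s_1\cdots s_{i-1}}\,y_i\,t_{s_i\cdots s_{n-1}}=\Psi$ directly.
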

\begin{proof}
By the definition \eqref{sigma def} of the intertwiners $\sigma_j$ and part (c) of Lemma \ref{action lemma} we have
\begin{equation*}
t_{s_{n-1} s_{n-2} \cdots s_i}.f_{\mu,T} \in F \text{-span} \{f_{\mu,S} \ | \ \hbox{$S$ a standard tableau on $\lambda$} \},
\end{equation*} and consequently part (e) of Lemma \ref{action lemma} implies
\begin{equation*}
t_{s_1 s_2 \cdots s_{i-1}} y_i.f_{\mu,T}=t_{s_1 s_2 \cdots s_{i-1}} y_i t_{s_i \cdots s_{n-1}} t_{s_{n-1} s_{n-2} \cdots s_i} f_{\mu,T}=\Psi.t_{s_{n-1} s_{n-2} \cdots s_i} f_{\mu,T}=0.
\end{equation*}  Therefore
\begin{equation}
y_i.f_{\mu,T}=0 \quad \hbox{if $\mu_j=0$ for $i \leq j \leq n$.} \  
\end{equation}
\end{proof} 

\section{Norm formula} \label{Norm section}

In this section we assume that the base field is $F=\CC(\kappa,c_0,d_1,\dots,d_{r-1})$.  Recall that we write $S^{\lambda}$ for the $F W$-module obtained by extension of scalars from $\CC$ to $F$.  We extend complex conjugation to an automorphism of $F$ by fixing $\kappa$, $c_0$, and the $d_j$'s.

Let $\la \cdot, \cdot \ra_0$ be the positive definite $W$-invariant Hermitian form on $S^{\lambda}$ such that $\la v_T,v_T \ra_0=1$ for all standard tableaux $T$ on $\lambda$.  For $x=\sum_{i=1}^n k_i x_i \in \hh^*$ and $y=\sum_{i=1}^n l_i y_i \in \hh$ define 
\begin{equation}
x^*=\sum_{i=1}^n \overline{k_i} y_i \quad \text{and} \quad y^*=\sum_{i=1}^n \overline{l_i} x_i.
\end{equation}  The map $*$ given by $x \mapsto x^*$, $y \mapsto y^*$, and $t_w \mapsto t_{w^{-1}}$ for $x \in \hh^*$, $y \in \hh$ and $w \in W$ extends to a skew-linear---with respect to the automorphism of $F$ defined above---anti-automorphism of $\HH$; this follows directly from the defining relations for $\HH$.  The \emph{contravariant form} on $M(\lambda)$ is the unique Hermitian form $\la \cdot, \cdot \ra$ on $M(\lambda)$ extending the form $\la \cdot, \cdot \ra_0$ on $S^{\lambda}=M(\lambda)^0$ and satisfying
\begin{equation}
\la t_w.f, t_w.g \ra=\la f, g \ra \quad \la x.f,g \ra=\la f, x^*.g \ra \quad \text{and} \quad \la y.f, g \ra=\la f, y^*.g \ra 
\end{equation} for $f,g \in M(\lambda)$, $w \in W$, $x \in \hh^*$, and $y \in \hh$.  The next theorem gives a product formula for the norms of the generalized non-symmetric Jack polynomials with respect to this form.  Note that it is not, generally speaking, a cancelation-free formula; however, if the factors in the denominator do not vanish then the zeros of the numerator control the radical of $M(\lambda)$.
\begin{theorem} \label{norm formula}
For $(\mu,T) \in \ZZ_{\geq 0} \times \text{SYT}(\lambda)$ write
\begin{equation*}
a_i=\text{ct}(T(w_\mu(i))) \quad \text{and} \quad b_i=\beta(T(w_\mu(i))).
\end{equation*}  Then the norm of $f_{\mu,T}$ is given by
\begin{align*}
\la f_{\mu,T}, f_{\mu,T} \ra&=\prod_{i=1}^n \prod_{k=1}^{\mu_i} \left(\kappa k-(d_{b_i}-d_{b_i-k})-c_0 r a_i \right) \\
&\times \prod_{\substack{1 \leq i < j \leq n \\ \mu_i > \mu_j}} \prod_{\substack{1 \leq k \leq \mu_i-\mu_j \\ k=b_i-b_j \ \text{mod} \ r}} \frac{\left(\kappa k-(d_{b_{i}}-d_{b_{j}})-c_0r (a_{i}-a_{j})\right)^2-(c_0 r)^2}{\left(\kappa k-(d_{b_{i}}-d_{b_{j}})-c_0r (a_{i}-a_{j}) \right)^2  } \\
&\times  \prod_{\substack{1 \leq i < j \leq n \\ \mu_i < \mu_j-1}} \prod_{\substack{1 \leq k \leq \mu_j-\mu_i-1 \\ k=b_j-b_i \ \text{mod} \ r}} \frac{\left(\kappa k-(d_{b_{j}}-d_{b_{i}})-c_0r (a_{j}-a_{i})\right)^2-(c_0 r)^2}{\left(\kappa k-(d_{b_{j}}-d_{b_{i}})-c_0r(a_{j}-a_{i}) \right)^2  }.
\end{align*} 
\end{theorem}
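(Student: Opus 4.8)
The plan is to prove the norm formula by induction, using the intertwining operators to move step by step through the family $\{f_{\mu,T}\}$ starting from the bottom element $f_{0,T}$ (where $\mu = 0$). The base case is $\langle f_{0,T}, f_{0,T}\rangle = \langle v_T, v_T\rangle_0 = 1$, which agrees with the product formula since all products over $i$ and over pairs $i<j$ are empty when $\mu = 0$. For the inductive step I would use the two elementary moves generated by $\phi$ and $\psi$ together with the simple transpositions $s_i$: every $\mu \in \ZZ_{\geq 0}^n$ can be reached from $0$ by a sequence of operations $\mu \mapsto \phi.\mu$ (raising) and $\mu \mapsto s_i.\mu$ (rearranging), so it suffices to track how the norm changes under each of these, using Lemma~\ref{action lemma} and the adjointness properties of $\Phi$, $\Psi$, $\sigma_i$ with respect to $\langle\cdot,\cdot\rangle$.

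The key computations are the adjoints of the intertwiners. Since $z_i$ is self-adjoint and $t_{\zeta_i}$ is unitary for $\langle\cdot,\cdot\rangle$ (as asserted at the start of this section, and provable directly from the relations, noting $\pi_i^* = \pi_i$), one gets $\sigma_i^* = \sigma_i$ and, using Proposition~\ref{intertwiner properties}(c)-(d), $\Phi^* = \Psi$. Thus for the raising move, $\langle f_{\phi.\mu,T}, f_{\phi.\mu,T}\rangle = \langle \Phi.f_{\mu,T}, \Phi.f_{\mu,T}\rangle = \langle f_{\mu,T}, \Psi\Phi.f_{\mu,T}\rangle = \langle f_{\mu,T}, z_1.f_{\mu,T}\rangle$, which by Theorem~\ref{Upper triangular}(a) equals the $z_1$-eigenvalue on $f_{\mu,T}$ times $\langle f_{\mu,T}, f_{\mu,T}\rangle$. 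This eigenvalue is exactly the factor $\kappa(\mu_1+1) - (d_{b_1} - d_{b_1 - \mu_1 - 1}) - c_0 r a_1$ which is precisely the new term $k = \mu_1+1$ contributed to the first product when $\mu_1 \mapsto \mu_1+1$ (after the cyclic relabelling coming from $\phi$; one checks the indices match up). For the rearranging move: when $\mu_i \ne \mu_{i+1}$, Lemma~\ref{action lemma}(a)-(b) gives $\sigma_i.f_{\mu,T} = c\, f_{s_i.\mu,T}$ for an explicit scalar $c$ (either $1$ or $(\delta-rc_0)(\delta+rc_0)/\delta^2$), and self-adjointness plus $\sigma_i^2 = (z_i - z_{i+1} - c_0\pi_i)(z_i - z_{i+1} + c_0\pi_i)/(z_i-z_{i+1})^2$ gives $\langle \sigma_i.f_{\mu,T}, \sigma_i.f_{\mu,T}\rangle = \langle f_{\mu,T}, \sigma_i^2.f_{\mu,T}\rangle$, yielding the ratio by which the norm scales; one then checks this ratio matches the change in the pair-products over $\{i, i+1\}$ when $\mu_i$ and $\mu_{i+1}$ are swapped, accounting for the mod-$r$ congruence condition via the $\pi_i$-eigenvalue (which is $r$ times an idempotent when $b_i \equiv b_{i+1}$ and $0$ otherwise).

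The main obstacle will be the bookkeeping of indices: the formula is stated in terms of $\mu^-$, $w_\mu$, $a_i = \text{ct}(T(w_\mu(i)))$, $b_i = \beta(T(w_\mu(i)))$, and the intertwiners $\phi, \psi, \sigma_i$ permute and shift these quantities in a way that interacts subtly with the products. I would need to verify carefully that (i) under $\phi$, the single new linear factor appearing in the first product is the one predicted, and that no pair-factors are created or destroyed spuriously --- this uses that $\phi.\mu$ has the same multiset of nonzero entries shifted, so the inequalities $\mu_i > \mu_j$ and $\mu_i < \mu_j - 1$ governing the pair-products transform correctly; and (ii) under $s_i$ the product over the pair $\{i,i+1\}$ changes exactly by the scalar from Lemma~\ref{action lemma}, including the degenerate case $\sigma_i.f_{\mu,T}=0$ when $\delta = \pm rc_0$, which forces the corresponding norm to vanish --- consistent with a zero numerator factor. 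A clean way to organize this is to fix $\mu$, write it as $\phi^{m}$ applied to $0$ followed by a permutation, or more symmetrically induct on $|\mu| = \sum \mu_i$ with an inner induction on $\ell(w_\mu)$; then only the two move-types above appear, and the verification reduces to two lemma-sized index computations which I would state and prove separately before assembling the final product.
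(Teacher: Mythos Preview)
Your proposal is correct and follows essentially the same approach as the paper's proof: establish the adjointness relations $\sigma_i^*=\sigma_i$ and $\Phi^*=\Psi$, then derive the two recursions $\langle f_{\mu,T},f_{\mu,T}\rangle = (\text{$z_1$-eigenvalue})\,\langle f_{\psi.\mu,T},f_{\psi.\mu,T}\rangle$ via $\Psi\Phi=z_1$ and $\langle f_{\mu,T},f_{\mu,T}\rangle = (\text{$\sigma_i^2$-eigenvalue})\,\langle f_{s_i.\mu,T},f_{s_i.\mu,T}\rangle$, and finish by induction on $|\mu|$ together with the length of the sorting permutation. The only cosmetic difference is that you phrase the degree step as raising via $\phi$ while the paper phrases it as lowering via $\psi$; the bookkeeping you flag as the ``main obstacle'' is exactly what the paper dismisses as a ``straightforward induction'', so your caution there is appropriate but not a gap.
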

\begin{proof}
First observe that the operators $z_i$ are self-adjoint with respect to the contravariant form:
\begin{equation}
z_i^*=(y_i x_i)^*+\phi_i^*=x_i^* y_i^*+\phi_i=y_i x_i+\phi_i=z_i,
\end{equation} and hence 
\begin{equation}
\sigma_i^*=\left( t_{s_i}+\frac{c_0 \pi_i}{z_i-z_{i+1}} \right) ^*=t_{s_i}+\frac{c_0 \pi_i}{z_i-z_{i+1}}=\sigma_i
\end{equation} for $1 \leq i \leq n-1$ and
\begin{equation}
\Phi^*=(x_n t_{s_{n-1} \cdots s_1} )^*=t_{s_1 \cdots s_{n-1} } y_n=\Psi.
\end{equation}

Combining these formulas with Lemma \ref{action lemma} shows that for $\mu_n>0$ we have
\begin{align*}
\la f_{\mu,T},f_{\mu,T} \ra&=\la \Phi.f_{\psi.\mu,T},\Phi.f_{\psi.\mu,T} \ra=\la f_{\psi.\mu,T},\Psi \Phi.f_{\psi.\mu,T} \ra=\la f_{\psi.\mu,T}, z_1 f_{\psi.\mu,T} \ra \\
&=\left(\kappa \mu_n-(d_{b_n}-d_{b_n-\mu_n})-c_0 r a_n \right) \la f_{\psi.\mu,T},f_{\psi.\mu,T} \ra,
\end{align*} if $\mu_i>\mu_{i+1}$ and $\zeta^{b_{i+1}-b_i} \neq \zeta^{\mu_{i+1}-\mu_i}$ then 
\begin{equation*}
\la f_{\mu,T}, f_{\mu,T} \ra=\la \sigma_i.f_{s_i.\mu,T},\sigma_i.f_{s_i.\mu,T} \ra=\la f_{s_i.\mu,T},\sigma_i^2.f_{s_i.\mu,T} \ra=\la f_{s_i.\mu,T},f_{s_i.\mu,T} \ra
\end{equation*} and if $\mu_i>\mu_{i+1}$ and $\zeta^{b_{i+1}-b_i} = \zeta^{\mu_{i+1}-\mu_i}$ then
\begin{align*}
\la f_{\mu,T}, f_{\mu,T} \ra&=\la \sigma_i.f_{s_i.\mu,T},\sigma_i.f_{s_i.\mu,T} \ra=\la f_{s_i.\mu,T},\sigma_i^2.f_{s_i.\mu,T} \ra \\
&=\frac{\left(\kappa(\mu_{i}-\mu_{i+1})-(d_{b_{i}}-d_{b_{i+1}})-c_0 r (a_{i}-a_{i+1})\right)^2-(c_0 r)^2}{\left(\kappa(\mu_{i}-\mu_{i+1})-(d_{b_{i}}-d_{b_{i+1}})-c_0r(a_{i}-a_{i+1}) \right)^2  } \la f_{s_i.\mu,T} f_{s_i.\mu,T} \ra.
\end{align*}  The theorem is proved using these formulas by a straightforward induction on $\mu_1+\mu_2+\cdots+\mu_n+l$, where $l$ is the length of the shortest permutation $v$ such that $v.\mu$ is in non-decreasing order.
\end{proof}

\begin{remark}
The radical of the form $\la \cdot,\cdot \ra$ is the unique maximal submodule of $M(\lambda)$, and therefore $M(\lambda)$ is simple exactly if the form is non-degenerate.  In the next section we give much more precise information on the submodule structure of $M(\lambda)$.  
\end{remark}

\section{A hyperplane arrangement and submodules of $M(\lambda)$} \label{submodules section}

In this section we assume $\kappa=1$; after proving Lemma 7.1 we will also assume $c_0 \neq 0$ (if $c_0=0$ the algebra $\HH$ becomes much simpler; see e.g. \cite{EtMo}).  Our goal is to give an explicit description of the submodule structure of $M(\lambda)$ in those cases for which $\ttt$ has simple spectrum on $M(\lambda)$.  First we describe the hyperplanes the parameters $(c_0,d_1,\dots,d_{r-1})$ must avoid in order for the $\ttt$-eigenspaces of $M(\lambda)$ to be one-dimensional. We define the set $\mathcal{E}_\lambda$ of \emph{exceptional hyperplanes} for the $r$-partition $\lambda$ as follows:

Let $\CC^{r}$ be the parameter space for $\HH$ with respect to the parameters $c_0$ and $d_1,d_2,\dots,d_{r-1}$.  Recall that the subscripts $d_i$ are to be read mod $r$ and $d_0=-d_1-\cdots-d_{r-1}$; this defines $d_i$ for all $i \in \ZZ$.  For integers $k,l$ and $m$ define a hyperplane $H_{k,l,m}$ by
\begin{equation}
H_{k,l,m}=\{(c_0,d_1,\dots,d_{r-1}) \ | \ k=d_l-d_{l-k}+m r c_0 \}.
\end{equation}  Then $\mathcal{E}_\lambda$ contains the hyperplanes $H_{k,l,m}$ for all $k \in \ZZ_{>0}$ with $k \neq 0$ mod $r$, $\lambda^l \neq \emptyset$, $\lambda^{l-k} \neq \emptyset$, and 
\begin{equation}
\text{ct}^-(\lambda^l)-\text{ct}^+(\lambda^{l-k}) \leq m \leq \text{ct}^+(\lambda^l)-\text{ct}^-(\lambda^{l-k}),
\end{equation} where for a partition $\nu$, $\text{ct}^+(\nu)$ is the maximum content of a box of $\nu$ and $\text{ct}^-(\nu)$ is the minimum content of a box of $\nu$.  

For each $0 \leq i \leq r-1$ such that $\lambda^i$ is non-empty and all $k \in \ZZ_{>0}$, $\mathcal{E}_\lambda$ contains the hyperplane 
\begin{equation}
k=c_0 m \quad \begin{cases} \hbox{for $0 < m \leq \text{ct}^+(\lambda^i)$ if $\lambda^i$ is a single row,} \\ \hbox{for $\text{ct}^-(\lambda^i) \leq m < 0$ if $\lambda^i$ is a single column, and} \\ \hbox{for $\text{ct}^-(\lambda^i)-\text{ct}^+(\lambda^i) \leq m \leq \text{ct}^+(\lambda^i)-\text{ct}^-(\lambda^i)$ with $m \neq 0$ otherwise.}\end{cases}
\end{equation}

 $\mathcal{E}_\lambda$ is a locally finite set of hyperplanes since every hyperplane it contains is the translate by a distance bounded away from $0$ of one of the form
\begin{equation}
k=d_l-d_{l-k}+m r c_0 
\end{equation} where $0 \leq k,l \leq r-1$ and $m$ runs over a finite set of integers (depending on $\lambda$).  

For an element $(\mu,T) \in \ZZ_{\geq 0}^n \times \text{SYT}(\lambda)$, define
\begin{equation*}
\text{wt}(\mu,T)_i=(\mu_i+1-(d_{\beta(T(w_\mu(i)))}-d_{\beta(T(w_\mu(i)))-\mu_i-1})-c_0 r \text{ct}(T(w_\mu(i))),\zeta^{\beta(T(w_\mu(i)))-\mu_i}) \quad \hbox{for $1 \leq i \leq n$.}
\end{equation*} 

\begin{lemma} \label{simple spectrum}  Suppose $c_0 \neq 0$.  The following are equivalent:
\begin{enumerate}
\item[(a)]  The $\ttt$-eigenspaces on $M(\lambda)$ are all one dimensional;
\item[(b)] for all $(\mu,T) \in \Gamma$, $\text{wt}(\mu,T)_i \neq \text{wt}(\mu,T)_{i+1}$ for $1 \leq i \leq n-1$;
\item[(c)] the point $(c_0,d_1,\dots,d_{r-1})$ does not lie in any hyperplane $H \in \mathcal{E}_\lambda$;
\item[(d)] the functions $f_{\mu,T}$ are all well-defined;
\item[(e)] the intertwiners $\sigma_i$ are well-defined on all of $M(\lambda)$.
\end{enumerate}
\end{lemma}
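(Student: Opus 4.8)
The plan is to prove the chain of equivalences by establishing enough implications to close the cycle, handling the easier equivalences first and concentrating the real work on the arithmetic of the weight formula.

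First I would dispose of the equivalences that follow almost formally from Theorem~\ref{Upper triangular} and the definition of the intertwiners. The equivalence $(a)\Leftrightarrow(d)$ is essentially the content of part (b) of Theorem~\ref{Upper triangular}: the $f_{\mu,T}$ are defined precisely by solving a triangular system whose solvability at each step is governed by whether the diagonal $\ttt$-eigenvalue of $x^\mu v_T^\mu$ differs from all strictly-lower diagonal eigenvalues, and since the diagonal eigenvalues run over all of $\text{wt}(\mu,T)$ as $(\mu,T)$ ranges over $\Gamma$, one-dimensionality of all eigenspaces is the same as being able to carry out every step of the Gram--Schmidt-type procedure. For $(d)\Leftrightarrow(e)$: the operator $\sigma_i=t_{s_i}+c_0(z_i-z_{i+1})^{-1}\pi_i$ is defined on a $\ttt$-weight space whenever $z_i-z_{i+1}$ is invertible there or $\pi_i$ vanishes there, and $\pi_i$ vanishes on a weight space exactly when $\zeta^{\beta_i}\neq\zeta^{\beta_{i+1}}$ there; so $\sigma_i$ failing to be defined somewhere on $M(\lambda)$ means there is a weight with $\zeta^{\beta_i}=\zeta^{\beta_{i+1}}$ and $\alpha_i=\alpha_{i+1}$, i.e.\ with $\text{wt}(\mu,T)_i=\text{wt}(\mu,T)_{i+1}$, which is exactly the negation of (b). This also gives $(b)\Leftrightarrow(e)$ directly.

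Next I would prove $(b)\Leftrightarrow(a)$, which is the geometric heart of the lemma: one direction is clear (if some eigenspace is $\geq 2$-dimensional then using the intertwiners $\sigma_i$, which preserve $\ttt$-eigenspaces up to scalar, one can move any two basis vectors sharing an eigenvalue into a pair $f_{\mu,T},f_{\mu',T'}$ that differ by an adjacent transposition in some coordinate, forcing $\text{wt}(\mu,T)_i=\text{wt}(\mu,T)_{i+1}$ for some $i$---this is the standard ``if two weights agree then they agree after permutation'' argument, using that the $S_n$-action on weights together with $\phi,\psi$ is transitive enough on $\Gamma$). Conversely, if (b) holds then every $\sigma_i$, $\Phi$, $\Psi$ is defined everywhere and the recursion of Lemma~\ref{action lemma} builds all the $f_{\mu,T}$, which are then a genuine $\ttt$-eigenbasis with distinct eigenvalues by the argument in Theorem~\ref{Upper triangular}(b) (comparing coefficients of $\kappa$, then of $c_0$, then the roots of unity). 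I would then prove $(b)\Leftrightarrow(c)$ by a direct computation: writing out $\text{wt}(\mu,T)_i=\text{wt}(\mu,T)_{i+1}$ using the explicit formula, the equality of the root-of-unity components says $\beta(T(w_\mu(i)))-\mu_i\equiv\beta(T(w_\mu(i+1)))-\mu_{i+1}\pmod r$, and the equality of the scalar components then rearranges into the equation $k=d_l-d_{l-k}+mrc_0$ with $k=|\mu_i-\mu_{i+1}|$ (or a single-row/single-column degeneration when the two boxes lie in the same $\lambda^l$), with $l,m$ read off from the contents and $\beta$-values of the two boxes; the constraints defining $\mathcal{E}_\lambda$ (nonemptiness of $\lambda^l,\lambda^{l-k}$ and the content bounds on $m$) are exactly the conditions under which such a coincidence is actually realized by some $(\mu,T)\in\Gamma$. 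Here one uses that adjacent boxes in the image of $w_\mu$ correspond, after applying $w_\mu^{-1}$, to boxes in $\lambda$ whose contents can be prescribed within the stated ranges, which is where the tableau combinatorics of Section~\ref{notation} enters.

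The main obstacle is the $(c)\Rightarrow(b)$ direction, or rather the bookkeeping in $(b)\Leftrightarrow(c)$: one must show that \emph{every} violation of (b) produces a point on some hyperplane in $\mathcal{E}_\lambda$ and that the hyperplanes in $\mathcal{E}_\lambda$ are \emph{exactly} those that arise, with the correct range of $m$ and the correct case split (the single-row, single-column, and generic cases for two boxes in the same component $\lambda^i$, versus two boxes in distinct components $\lambda^i,\lambda^j$). This requires carefully reducing, via the intertwiners, to the case where the two coordinates $i,i+1$ with coinciding weight-entries can be taken with $\mu_i-\mu_{i+1}$ as small as possible, and then checking that the extreme contents $\text{ct}^\pm(\lambda^i)$ are simultaneously achievable by a standard tableau together with a legal choice of $\mu$---a finite but somewhat delicate combinatorial verification. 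I expect the rest to be routine given Theorem~\ref{Upper triangular}, Lemma~\ref{action lemma}, and Proposition~\ref{intertwiner properties}.
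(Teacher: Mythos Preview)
Your treatment of the formal equivalences among (b), (d), and (e) is fine, and the translation $(b)\Leftrightarrow(c)$ via the weight formula is indeed routine bookkeeping. The genuine gap is in your argument for $(b)\Rightarrow(a)$. The sentence ``a genuine $\ttt$-eigenbasis with distinct eigenvalues by the argument in Theorem~\ref{Upper triangular}(b) (comparing coefficients of $\kappa$, then of $c_0$, then the roots of unity)'' does not apply: in this section $\kappa=1$ and $c_0,d_1,\dots,d_{r-1}$ are fixed complex numbers, so there are no independent coefficients to compare. That coefficient-comparison is precisely what makes Theorem~\ref{Upper triangular}(b) work \emph{generically}, and the entire content of the lemma is to decide when the conclusion survives specialisation. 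Your alternative intertwiner sketch (``move any two basis vectors sharing an eigenvalue into a pair that differ by an adjacent transposition'') is not a proof either: knowing $\mathrm{wt}(\mu,T)=\mathrm{wt}(\nu,S)$ for distinct $(\mu,T),(\nu,S)$ does not, by any soft $S_n$-orbit argument, produce a single weight with two equal adjacent coordinates, because the shifts $\phi,\psi$ take you out of a pure permutation orbit. Relatedly, your claimed direct equivalence $(a)\Leftrightarrow(d)$ fails in the direction $(d)\Rightarrow(a)$: the triangular Gram--Schmidt only needs $\mathrm{wt}(\mu,T)\neq\mathrm{wt}(\nu,S)$ for \emph{comparable} pairs, so all $f_{\mu,T}$ can exist while two incomparable ones share a weight.

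The paper closes the cycle differently: it takes $(a)\Rightarrow(d)\Rightarrow(b)\Leftrightarrow(e)$ and $(b)\Leftrightarrow(c)$ as the easy steps (the key observation for $(d)\Rightarrow(b)$ being that if $\mathrm{wt}(\mu,T)_i=\mathrm{wt}(\mu,T)_{i+1}$ then $t_{s_i}.f_{\mu,T}$ is a non-genuine generalized eigenvector, impossible once the $f_{\mu,T}$ form an eigenbasis), and then proves $(c)\Rightarrow(a)$ directly. Assuming $\mathrm{wt}(\mu,S)=\mathrm{wt}(\nu,T)$, the hyperplanes with $k\not\equiv 0\pmod r$ first force $\mu_i-\nu_i\equiv 0\pmod r$, hence the two relevant boxes lie in the same component $\lambda^j$; the remaining hyperplanes $k=c_0m$ then force that whenever $\mu_i\neq\nu_i$ the component $\lambda^j$ is a single row or column with a prescribed content inequality between the boxes. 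This last fact feeds an induction down from $i=n$ showing $\mu^-_i=\nu^-_i$ and $S^{-1}Tv_\nu w_\mu^{-1}(i)=i$ at each step, whence $(\mu,S)=(\nu,T)$. The row/column case split in the definition of $\mathcal{E}_\lambda$ is exactly what this argument needs, so the ``main obstacle'' is $(c)\Rightarrow(a)$, not the $(b)\Leftrightarrow(c)$ bookkeeping you flagged.
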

\begin{proof}
The equivalence of (b), (c), (d), and (e) follow from the definitions and the observation that if $f_{\mu,T}$ is well-defined and $\text{wt}(\mu,T)_i=\text{wt}(\mu,T)_{i+1}$ then $t_{s_i}.f_{\mu,T}$ is a generalized eigenvector of weight $\text{wt}(\mu,T)$ which is not a genuine eigenvector.  That (b) implies (c) is a straightforward check.

Finally, we prove that (c) implies (a).  Suppose that (c) holds and that $(\mu,S), (\nu,T) \in \Gamma$ with $\text{wt}(\mu,T)=\text{wt}(\nu,S)$.  Since $\text{wt}(\mu,S)=\text{wt}(\nu,T)$ for $1 \leq i \leq n$ we have
\begin{equation}
\mu_i-\nu_i=\beta(b)-\beta(b') \ \text{mod} \ r \quad \text{and} \quad \mu_i-\nu_i=d_{\beta(b)}-d_{\beta(b')}+r c_0 (\text{ct}(b)-\text{ct}(b')),
\end{equation} where $b=S(w_\mu(i))$ and $b'=T(v_\nu(i))$.  Thus $\mu_i-\nu_i=0$ mod $r$.  By (c), if $\mu_i \neq \nu_i$ then $b$ and $b'$ appear in the same component $\lambda^j$ of $\lambda$, $\lambda^j$ is either a single row or single column, and 
\begin{equation}
\text{ct}(b)>\text{ct}(b') \ \hbox{if $\lambda^j$ is a column, and} \quad \text{ct}(b)<\text{ct}(b') \ \hbox{if $\lambda^j$ is a row.}
\end{equation}  It follows that if $\mu_i>\nu_i$ for some $1 \leq i \leq n$ then
\begin{equation}
w_\mu(i)=S^{-1}(b)<S^{-1}(b')=S^{-1} T (v_\nu(i)).
\end{equation}  In particular, taking $i=w_\mu^{-1}(n)$ shows
\begin{equation}
\mu^-_n = \mu_{w_\mu^{-1}(n)} \leq \nu_{w_\mu^{-1}(n)}.
\end{equation}  By symmetry, $\nu_n^- \leq \mu_{v_\nu^{-1}(n)}$ and it follows that $\mu_n^-=\nu_n^-$ and $n=S^{-1} T v_\nu w_\mu^{-1} (n)$.  Now taking $i=w_\mu^{-1}(n-1)$ shows that if $\mu^-_{n-1}>\nu_{w_\mu^{-1}(n-1)}$ then
\begin{equation}
n-1<S^{-1} T v_\nu w_\mu^{-1}(n-1) \quad \implies \quad S^{-1} T v_\nu w_\mu^{-1}(n-1)=n,
\end{equation} contradicting our previous calculation.  We conclude as above that $\mu^-_{n-1}=\nu^-_{n-1}$ and $n-1=S^{-1} T v_\nu w_\mu^{-1}(n-1)$.  Continuing in this fashion we obtain $S^{-1} T v_\nu w_\mu^{-1}(i)=i$ for $1 \leq i \leq n$ which combined with the equation $\text{wt}(\mu,S)=\text{wt}(\nu,T)$ implies $(\mu,S)=(\nu,T)$, contradiction.
\end{proof}

If condition (a) of Lemma~\ref{simple spectrum} holds we say that the spectrum of $M(\lambda)$ is \emph{simple}.  When the spectrum of $M(\lambda)$ is simple, the \emph{calibration graph} of $M(\lambda)$ is the directed graph $\Gamma$ with vertex set 
\begin{equation} 
\Gamma=\{(\mu,T) \ | \ \mu \in \ZZ_{\geq 0}^n, T \in \text{SYT}(\lambda) \}
\end{equation} and with directed edges given by:
\begin{equation} \label{gamone}
(\mu,T) \rightarrow (s_i.\mu,T) \quad \iff \quad \mu_i \neq \mu_{i+1} \ \text{and} \ \sigma_i.f_{\mu,T} \neq 0,
\end{equation} and with $j=w_\mu(i)$
\begin{equation}
(\mu,T) \rightarrow (\mu,s_{j-1}.T) \quad \iff \quad \hbox{$\mu_i=\mu_{i+1}$ and $s_{j-1}.T$ is a standard tableau,}
\end{equation}
\begin{equation}
(\mu,T) \rightarrow (\phi.\mu,T) \quad \hbox{for all $\mu,T$,}
\end{equation} and when $\mu_n>0$
\begin{equation} \label{gamtwo}
(\mu,T) \rightarrow (\psi.\mu,T) \quad \iff \quad \Psi.f_{\mu,T} \neq 0.
\end{equation}  We also define the \emph{generic} calibration graph $\Gamma^{\text{gen}}$ by removing the conditions in \eqref{gamone} and \eqref{gamtwo}.  A subset $X \subseteq \Gamma$ is \emph{closed} if $(\mu,T) \in X$ and $(\mu,T) \rightarrow (\nu,S)$ implies that $(\nu,S) \in X$. 

\begin{lemma}
Suppose the $\ttt$-spectrum of $M(\lambda)$ is simple.  Then the set of submodules of $M(\lambda)$ is in bijection with the set of closed subsets of $\Gamma$, via the mapping associating to a submodule $M$ the set of $(\mu,T)$ with $f_{\mu,T} \in M$.  
\end{lemma}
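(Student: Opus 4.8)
The plan is to use that, under the simplicity hypothesis, Theorem~\ref{Upper triangular} and Lemma~\ref{simple spectrum} give a decomposition $M(\lambda)=\bigoplus_{(\mu,T)\in\Gamma}\CC f_{\mu,T}$ into one-dimensional $\ttt$-weight spaces carrying pairwise distinct $\ttt$-weights. The first observation is that then every $\HH$-submodule $N$ is automatically the $\CC$-span of the $f_{\mu,T}$ it contains, so that $N\mapsto X(N):=\{(\mu,T):f_{\mu,T}\in N\}$ is injective from submodules into subsets of $\Gamma$, with $V_X:=\CC\text{-span}\{f_{\mu,T}:(\mu,T)\in X\}$ satisfying $X(V_X)=X$ for every $X$ (the $f_{\mu,T}$ being linearly independent). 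So the content of the lemma is to prove that $V_X$ is an $\HH$-submodule if and only if $X$ is closed.

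For the first observation, let $N\subseteq M(\lambda)$ be a $\ttt$-submodule and let $0\neq v=\sum_{(\mu,T)\in S}c_{\mu,T}f_{\mu,T}\in N$ with $S\subseteq\Gamma$ finite and every $c_{\mu,T}\neq 0$. Writing $V$ for the $\CC$-span of $\{f_{\mu,T}:(\mu,T)\in S\}$, the algebra $\ttt$ acts on $V$ by diagonal matrices with respect to this basis, with pairwise distinct diagonal entries; hence its image in $\text{End}(V)$ is a unital subalgebra of the algebra of diagonal matrices separating the coordinates, and so (a standard argument with idempotents) is the whole algebra of diagonal matrices. In particular the projection onto $\CC f_{\mu,T}$ lies in the image of $\ttt$, whence $c_{\mu,T}f_{\mu,T}\in N$ and so $f_{\mu,T}\in N$. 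This gives $N=V_{X(N)}$.

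Next I would reduce $\HH$-stability of $V_X$ to stability under the operators $\sigma_1,\dots,\sigma_{n-1},\Phi,\Psi$ of Lemma~\ref{action lemma}. The operators $\Phi=x_nt_{s_{n-1}\cdots s_1}$ and $\Psi=y_1t_{s_1\cdots s_{n-1}}$ arise from elements of $\HH$, and conversely $x_n=\Phi\,t_{s_1\cdots s_{n-1}}$ and $y_1=\Psi\,t_{s_{n-1}\cdots s_1}$; so modulo $\CC W$ the generators $x_i,y_i$ of $\HH$ may be traded for $\Phi,\Psi$. The operator $D_i:=\sigma_i-t_{s_i}=c_0(z_i-z_{i+1})^{-1}\pi_i$ is a well-defined operator on $M(\lambda)$: on any weight space $z_i-z_{i+1}$ is invertible whenever $\pi_i$ is nonzero (otherwise that weight would violate Lemma~\ref{simple spectrum}(b)), and since $\pi_i,z_i\in\ttt$ the operator $D_i$ is diagonal in the basis $\{f_{\mu,T}\}$. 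As $V_X$ is a coordinate subspace for this basis it is stable under every diagonal operator, in particular under all of $\ttt$; hence $V_X$ is stable under $t_{s_i}$ if and only if it is stable under $\sigma_i$. Since $W$ is generated by $s_1,\dots,s_{n-1}$ and $\zeta_1$, and all $t_{\zeta_j}$ lie in $\ttt$, we conclude that $V_X$ is an $\HH$-submodule if and only if it is stable under $\sigma_1,\dots,\sigma_{n-1},\Phi$ and $\Psi$.

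Finally I would match stability under $\{\sigma_i,\Phi,\Psi\}$ with closedness of $X$ by inspecting Lemma~\ref{action lemma}: for each such operator $\theta$ and each $(\mu,T)$, that lemma shows $\theta f_{\mu,T}$ is a scalar multiple of a single basis vector $f_{\nu,S}$ with $(\nu,S)$ the combinatorially prescribed target, and, using simplicity to rule out the degenerate possibilities, this scalar is nonzero exactly when $(\mu,T)\to(\nu,S)$ is an edge of $\Gamma$ --- the $\sigma_i$-edges with $\mu_i\neq\mu_{i+1}$ coming from Lemma~\ref{action lemma}(a),(b), the within-$S^\lambda$ edges with $\mu_i=\mu_{i+1}$ and $j=w_\mu(i)$ from Lemma~\ref{action lemma}(c) (using that $s_{j-1}.T$ standard forces the content difference $\text{ct}(T(j-1))-\text{ct}(T(j))$ to avoid $\pm 1$), and the $\Phi$- and $\Psi$-edges from Lemma~\ref{action lemma}(d),(e). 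Hence $V_X$ is stable under every $\sigma_i,\Phi,\Psi$ precisely when $X$ is closed, and combining this with the previous two steps yields the asserted bijection. I expect the delicate points to be the verification that $D_i$ is globally well-defined --- which is exactly where simplicity of the spectrum enters --- and the bookkeeping needed to confirm that the non-vanishing conditions built into the definition of $\Gamma$ coincide with the non-vanishing of the scalars produced by Lemma~\ref{action lemma}.
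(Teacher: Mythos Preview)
Your proposal is correct and follows essentially the same approach as the paper's (very terse) proof, which merely records the two key points: that every nonzero submodule contains some $f_{\mu,T}$, and that stability under $\ttt$, $\Phi$, $\Psi$, and the $\sigma_i$ implies stability under all of $\HH$. You have correctly expanded these into a complete argument, including the verification (which the paper leaves implicit) that the edge conditions defining $\Gamma$ match exactly the nonvanishing of the scalars in Lemma~\ref{action lemma}; in particular your use of the standard fact that if $T$ and $s_{j-1}.T$ are both standard tableaux on the same partition then $|\text{ct}(T(j))-\text{ct}(T(j-1))|\geq 2$ is the right way to handle case~(c).
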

\begin{proof}
Straightforward; the points are that every non-zero submodule $M$ of $M(\lambda)$ contains some $f_{\mu,T}$ and if a subspace $M$ is closed under $\ttt$, $\Phi,\Psi$, and $\sigma_1,\dots,\sigma_{n-1}$ then it is closed under all of $\HH$.
\end{proof}

Thus in the situation of Lemma \ref{simple spectrum}, the study of the submodule structure of $M(\lambda)$ is reduced to the study of the digraph $\Gamma$.  We next describe the closed subsets that can arise.  For a box $b \in \lambda$ and an integer $k \in \ZZ_{>0}$, define the subset $\Gamma_{b,k} \subseteq \Gamma$ by
\begin{equation}
\Gamma_{b,k}=\{(\mu,T) \in \Gamma \ | \ \mu^-_{T^{-1}(b)} \geq k \}
\end{equation}  where $\mu^-=w_\mu.\mu$ is the non-decreasing (i.e., anti-partition) rearrangement of $\mu$.  For an ordered pair of distinct boxes $b_1,b_2 \in \Gamma$ and an integer $k \in \ZZ_{>0}$, define the subset $\Gamma_{b_1,b_2,k}$ of $\Gamma$ by
\begin{align*}
(\mu,T) \in \Gamma_{b_1,b_2,k} \quad \iff \quad &\hbox{either $ \mu^-_{T^{-1}(b_1)} - \mu^-_{T^{-1}(b_2)} > k$} \\ 
& \hbox{or  $\mu^-_{T^{-1}(b_1)} - \mu^-_{T^{-1}(b_2)}=k$ and $w_\mu^{-1}(T^{-1}(b_1)) < w_\mu^{-1}(T^{-1}(b_2))$}.
\end{align*}  We write
\begin{equation} \label{fundamental submodules}
M_{b,k}=\CC \text{-span} \{f_{\mu,T} \ | \ (\mu,T) \in \Gamma_{b,k} \} \quad \text{and} \quad M_{b_1,b_2,k}=\CC \text{-span} \{ f_{\mu,T} \ | \ (\mu,T) \in \Gamma_{b_1,b_2,k} \}
\end{equation}  for the $\CC$-spans of $ \Gamma_{b,k}$ and $\Gamma_{b_1,b_2,k}$.

For an element $(\mu,T) \in \Gamma$ define the \emph{inversion set} $R(\mu,T)$ by
\begin{equation}
R(\mu,T)=\{ \Gamma_{b,k} \ | \ (\mu,T) \in \Gamma_{b,k} \} \cup \{\Gamma_{b_1,b_2,k} \ | \ (\mu,T) \in \Gamma_{b_1,b_2,k} \}.
\end{equation}  The size of $R(\mu,T)$ is a measure of how far $\mu$ is from the zero sequence.

The following technical lemma describes the properties of inversion sets we will need for our proof of Theorem \ref{submodules theorem}.  Its proof is a straightforward unwinding of the definitions and we omit it.

\begin{lemma} \label{inversion lemma}
\begin{enumerate}
\item[(a)]  For $(\mu,T) \in \Gamma$ put $k=\mu_1+1$ and $b=T(v_{\phi.\mu}(n))$.  Then
\begin{equation*}
R(\phi.\mu,T)=R(\mu,T) \cup \{\Gamma_{b,k} \}.
\end{equation*}

\item[(b)]  Suppose $\mu_i<\mu_{i+1}$ and put $k=\mu_{i+1}-\mu_i$, $b_1=T(w_\mu(i+1))$, and $b_2=T(w_\mu(i))$.  Then
\begin{equation*}
R(s_i.\mu,T)=R(\mu,T) \cup \{ \Gamma_{b_1,b_2,k} \}
\end{equation*}

\item[(c)]  Suppose $\mu_i=\mu_{i+1}$ and let $j=w_\mu(i)$.  Then
\begin{equation*}
R(\mu,s_{j-1}.T)=R(\mu,T).
\end{equation*}
\end{enumerate}

\end{lemma}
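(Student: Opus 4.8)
The plan is to unwind the definitions of the inversion set $R(\mu,T)$ — which simply records which threshold sets $\Gamma_{b,k}$ and which comparison sets $\Gamma_{b_1,b_2,k}$ contain the pair $(\mu,T)$ — and to track how the anti-partition rearrangement $\mu^-$ and the permutation $w_\mu$ behave under the three moves $\mu\mapsto\phi.\mu$, $\mu\mapsto s_i.\mu$ (when $\mu_i\neq\mu_{i+1}$), and $T\mapsto s_{j-1}.T$. The computation rests on four elementary facts from Section~\ref{notation}: (i) $w_{\phi.\mu}=w_\mu c$ and $w_{s_i.\mu}=w_\mu s_i$ when $\mu_i\neq\mu_{i+1}$, so $w_{\phi.\mu}^{-1}=c^{-1}w_\mu^{-1}$, $w_{s_i.\mu}^{-1}=s_iw_\mu^{-1}$, $w_{\phi.\mu}(n)=w_\mu(c(n))=w_\mu(1)$, and $c^{-1}$ sends $1\mapsto n$ while preserving the order on $\{2,\dots,n\}$; (ii) $\mu^-_{w_\mu(k)}=\mu_k$ for every $k$, immediate from $\mu^-=w_\mu.\mu$; (iii) $(s_i.\mu)^-=\mu^-$, while $(\phi.\mu)^-=\mu^-+\epsilon_{w_\mu(1)}$, which is again non-decreasing because $\mu^-_{w_\mu(1)}=\mu_1$ and $\mu^-_{w_\mu(1)+1}>\mu_1$; and (iv) if $\mu_i=\mu_{i+1}$ then $w_\mu(i+1)=w_\mu(i)-1$ and $\mu^-_{w_\mu(i)}=\mu^-_{w_\mu(i+1)}=\mu_i$. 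I will also use that the tableau $s_{j-1}.T$ has inverse $s_{j-1}T^{-1}$, so it merely exchanges the preimages of the boxes $T(j-1)$ and $T(j)$, and that right multiplication of $w_\mu$ by $s_i$ or $c$ corresponds to left multiplication of $w_\mu^{-1}$.

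Part (c) is the easiest: $\mu$ and $w_\mu$ are untouched, and passing from $T$ to $s_{j-1}.T$ can only affect conditions involving the boxes $T(j-1)$ and $T(j)$. By (iv), $\mu^-_{j-1}=\mu^-_j$, so every value $\mu^-_{T^{-1}(b)}$ and every difference $\mu^-_{T^{-1}(b_1)}-\mu^-_{T^{-1}(b_2)}$ is preserved; and for the tie-breaking clause of a comparison set $\Gamma_{b_1,b_2,k}$ one uses $w_\mu^{-1}(j)=i$, $w_\mu^{-1}(j-1)=i+1$, together with the fact that a nonzero difference of $\mu^-$-values forces at most one of $T^{-1}(b_1),T^{-1}(b_2)$ into $\{j-1,j\}$ — which leaves the tie-breaker unchanged. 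Hence $R(\mu,s_{j-1}.T)=R(\mu,T)$.

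For part (b), fact (iii) gives $(s_i.\mu)^-=\mu^-$, so the threshold sets $\Gamma_{b,k}$ in $R$ do not change; and by (i) the tie-breaker $w_\mu^{-1}(T^{-1}(b_1'))<w_\mu^{-1}(T^{-1}(b_2'))$ changes only when $\{w_\mu^{-1}(T^{-1}(b_1')),w_\mu^{-1}(T^{-1}(b_2'))\}=\{i,i+1\}$, i.e.\ only for the sets $\Gamma_{b_1,b_2,k'}$ and $\Gamma_{b_2,b_1,k'}$ with $b_1=T(w_\mu(i+1))$ and $b_2=T(w_\mu(i))$. Using (ii), $\mu^-_{T^{-1}(b_1)}-\mu^-_{T^{-1}(b_2)}=\mu_{i+1}-\mu_i=k>0$; and when $w_\mu$ is replaced by $w_\mu s_i$ the tie-breaker at level $k$ flips from ``$i+1<i$'' (false) to ``$i<i+1$'' (true). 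Thus $(\mu,T)\in\Gamma_{b_1,b_2,k'}\iff k'<k$, whereas $(s_i.\mu,T)\in\Gamma_{b_1,b_2,k'}\iff k'\le k$, and $\Gamma_{b_2,b_1,k'}$ lies in neither inversion set since the relevant difference is $-k<0$. Therefore $R(s_i.\mu,T)=R(\mu,T)\cup\{\Gamma_{b_1,b_2,k}\}$.

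Part (a) is the step I expect to be the main obstacle, since here $\mu^-$ genuinely changes. Write $b=T(w_{\phi.\mu}(n))$; by (i) one has $w_{\phi.\mu}(n)=w_\mu(1)$, so $T^{-1}(b)=w_\mu(1)$, and by (iii), $(\phi.\mu)^-=\mu^-+\epsilon_{w_\mu(1)}$. Among the threshold sets the only membership that changes is $\Gamma_{b,k'}$, which passes from ``$k'\le\mu_1$'' to ``$k'\le\mu_1+1$'', producing exactly the claimed new element $\Gamma_{b,k}$ with $k=\mu_1+1$. For the comparison sets $\Gamma_{b_1',b_2',k'}$ one splits into the cases $b\notin\{b_1',b_2'\}$, $b_1'=b$, and $b_2'=b$. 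In the first case neither the $\mu^-$-difference nor the tie-breaker moves (both $w_\mu^{-1}$-images avoid $1$, where $c^{-1}$ is order-preserving). In the remaining two cases the $\pm1$ shift in the $\mu^-$-difference caused by $\epsilon_{w_\mu(1)}$ is exactly cancelled by the behaviour of the tie-breaker: the index $T^{-1}(b)=w_\mu(1)$ is sent by $w_\mu^{-1}$ to $1$ (the minimum) but by $w_{\phi.\mu}^{-1}=c^{-1}w_\mu^{-1}$ to $n$ (the maximum), so a strict inequality involving $b$ that holds in $\mu$ fails in $\phi.\mu$ precisely when the difference has simultaneously changed by one, and conversely. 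A short case-by-case check then confirms that every $\Gamma_{b_1',b_2',k'}$-membership is preserved, giving $R(\phi.\mu,T)=R(\mu,T)\cup\{\Gamma_{b,k}\}$. This matching of the unit shift of $\mu^-$ against the ``wrap-around'' of the cyclic permutation $c$ in the tie-breaking rule is the only genuinely delicate bookkeeping; the rest is mechanical.
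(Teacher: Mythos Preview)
Your argument is correct and is precisely the ``straightforward unwinding of the definitions'' that the paper alludes to; since the paper omits the proof entirely, your write-up in fact supplies what the paper leaves to the reader. The key bookkeeping points you identify --- that $(\phi.\mu)^-=\mu^-+\epsilon_{w_\mu(1)}$ with $w_{\phi.\mu}^{-1}=c^{-1}w_\mu^{-1}$, and that in part~(a) the $+1$ shift in the $\mu^-$-difference is exactly compensated by the position $1\mapsto n$ wrap-around in the tie-breaker --- are the only nontrivial steps, and you handle them correctly.
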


The next lemma describes the minimal length paths between two elements of $\Gamma^{\text{gen}}$.  It is the final combinatorial fact we need for the proof of Theorem \ref{submodules theorem}.

\begin{lemma} \label{paths lemma}
Let $(\mu,T), (\nu,S) \in \Gamma$ be two elements of $\Gamma$.  Then there is a sequence $(\mu,T)=(\mu_0,T_0),(\mu_1,T_1),\dots,(\mu_m,T_m)=(\nu,S)$ of elements of $\Gamma$ such that $(\mu_i,T_i)$ is adjacent to $(\mu_{i-1},T_{i-1})$ in $\Gamma^{\text{gen}}$ and $R(\mu_i,T_i)$ is either equal to $R(\mu_{i-1},T_{i-1})$ or is obtained from it by adjoining some element of $R(\nu,S)$ or by deleting some element not in $R(\nu,S)$.  
\end{lemma}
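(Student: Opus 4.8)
The plan is to argue by induction on $d := |R(\mu,T)\,\triangle\,R(\nu,S)|$, the cardinality of the symmetric difference of the two inversion sets. This is the natural induction variable: along any path satisfying the conclusion of the lemma, an edge adjoining an element of $R(\nu,S)$ drops $d$ by $1$, an edge deleting an element not in $R(\nu,S)$ drops $d$ by $1$, and an edge leaving $R$ unchanged leaves $d$ unchanged, so a valid path is precisely one which drives $d$ to $0$ in this controlled way. Throughout I use the dictionary, immediate from Lemma \ref{inversion lemma}, between the edges of $\Gamma^{\text{gen}}$ and the changes they induce on $R$: the edge $(\mu,T)\to(\phi.\mu,T)$ adjoins a single set $\Gamma_{b,\mu_1+1}$; the edge $(\mu,T)\to(s_i.\mu,T)$ with $\mu_i<\mu_{i+1}$ adjoins a single set $\Gamma_{b_1,b_2,\mu_{i+1}-\mu_i}$; the reverse ``descending'' edges ($\psi$, and $s_i$ with $\mu_i>\mu_{i+1}$) delete a single such set; and the tableau edges $(\mu,T)\to(\mu,s_{j-1}.T)$ leave $R$ unchanged. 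In particular every edge changes $|R|$ by $0$ or $1$.

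For the base case $d=0$ we have $R(\mu,T)=R(\nu,S)$. A straightforward unwinding of the definitions (in the spirit of Lemma \ref{inversion lemma}) shows that $R(\mu,T)$ determines $\mu$: the largest $k$ with $\Gamma_{b,k}\in R(\mu,T)$ equals $\mu^-_{T^{-1}(b)}$, so reading these values over all boxes $b$ recovers $\mu^-$, and the sets $\Gamma_{b_1,b_2,k}$ with $k$ maximal recover the inversions of $w_\mu$, hence $w_\mu$ and $\mu=w_\mu^{-1}.\mu^-$; thus $\mu=\nu$. By the first paragraph every edge of a valid path here must leave $R$ fixed, i.e. must be a tableau edge. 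Using $w_\mu.\mu=\mu^-$ one checks that the tableau edges available at $\mu$ are exactly the transpositions of two consecutive labels lying in a common level set of $b\mapsto\mu^-_{T^{-1}(b)}$, and that $R(\mu,T)=R(\mu,S)$ forces $T$ and $S$ to fill each box with a label from the same level set. Since the labels of a level set form an interval of $\{1,\dots,n\}$, these transpositions generate the Young subgroup through which $T$ and $S$ differ, so the base case follows from the classical connectivity of $\text{SYT}(\lambda)$ under standard elementary transpositions, applied within each level set.

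For the inductive step $d>0$ we have $R(\mu,T)\neq R(\nu,S)$, so $R(\mu,T)\not\subseteq R(\nu,S)$ or $R(\nu,S)\not\subseteq R(\mu,T)$. By interchanging the two points and reversing the resulting path --- a backwards ``adjoin an element of $R(\nu,S)$'' step becomes a forwards ``delete an element not in $R(\mu,T)$'' step and conversely, and since the target elements are never deleted along a valid path, a valid path for one ordering reverses to a valid path for the other, using only the inductive hypothesis at level $d-1$ --- it suffices to treat $R(\mu,T)\not\subseteq R(\nu,S)$. Here I claim there is a path out of $(\mu,T)$ consisting of finitely many tableau edges followed by a single descending edge, which deletes some $\gamma\in R(\mu,T)\setminus R(\nu,S)$; granting this, the endpoint $(\mu',T')$ satisfies $|R(\mu',T')\,\triangle\,R(\nu,S)|=d-1$, and concatenating with the path supplied by the inductive hypothesis for $(\mu',T'),(\nu,S)$ (same target $R(\nu,S)$) finishes. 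To prove the claim, observe that the descending edges available at a point $(\mu,T')$ in the fiber of $(\mu,T)$ delete, by Lemma \ref{inversion lemma}, the sets $\Gamma_{T'(w_\mu(n)),\mu_n}$ (when $\mu_n>0$) and $\Gamma_{T'(w_\mu(i)),T'(w_\mu(i+1)),\mu_i-\mu_{i+1}}$ (when $\mu_i>\mu_{i+1}$), and as $T'$ ranges over the fiber these cover a ``top layer'' of $R(\mu,T)$. Using that every inversion set $R(\nu,S)$ is downward closed for the implication order $\preceq$ on the index family (the coarsest order for which all inversion sets are downward closed; it satisfies $\Gamma_{b,k'}\preceq\Gamma_{b,k}$, $\Gamma_{b_1,b_2,k'}\preceq\Gamma_{b_1,b_2,k}$ for $k'\le k$, and $\Gamma_{b_1,k}\preceq\Gamma_{b_1,b_2,k'}$ for $k\le k'$, among others), one sees that if this entire top layer lay in $R(\nu,S)$ then $R(\mu,T)\subseteq R(\nu,S)$, a contradiction; so some $\gamma$ in the top layer lies in $R(\mu,T)\setminus R(\nu,S)$, and after tableau moves bringing it into ``last'' position it is deleted by a descending edge as required.

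The main obstacle is precisely this last combinatorial claim: identifying the ``top layer'' of $R(\mu,T)$ with the set of inversions removable by a descending edge after a suitable sequence of tableau moves, and verifying it is $\preceq$-cofinal in $R(\mu,T)$ (so an inversion set that fails to contain it cannot be all of $R(\mu,T)$). Establishing this requires combining the explicit formulas of Lemma \ref{inversion lemma}, the defining formula \eqref{wmu def} for $w_\mu$, the standardness constraints governing which tableaux lie in a given fiber, and a short case analysis according to whether the inversion to be removed is a singleton $\Gamma_{b,k}$ or a pair $\Gamma_{b_1,b_2,k}$; everything else in the argument is the two-case induction above together with the reduction to Lemma \ref{inversion lemma}.
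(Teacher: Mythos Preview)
Your inductive reduction has a genuine gap. The symmetry move ``swap the endpoints and reverse'' only tells you that it suffices to find a good path from \emph{one} of the two endpoints; it does not tell you that the first step of that path can always be taken to be a \emph{descending} (deleting) move. Your claim that the ``top layer'' of removable inversions is $\preceq$-cofinal in $R(\mu,T)$ is false, and with it the assertion that some element of $R(\mu,T)\setminus R(\nu,S)$ is always removable after tableau moves.

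Here is a concrete failure for $r=1$, $n=3$, $\lambda=(2,1)$. Take $T=T_1$ with $T_1(1),T_1(2)$ in the top row and $T_1(3)$ below, and $S=T_2=s_2.T_1$. Let $\mu=(1,0,2)$ and $\nu=(2,1,2)$. One computes
\[
R(\mu,T_1)\setminus R(\nu,T_2)=\{\Gamma_{T_1(3),T_1(1),1}\},\qquad
R(\nu,T_2)\setminus R(\mu,T_1)=\{\Gamma_{T_1(1),1},\ \Gamma_{T_1(2),2}\}.
\]
At $(\mu,T_1)$ all $\mu_i$ are distinct, so there are no tableau moves. The only descending moves delete $\Gamma_{T_1(2),T_1(1),1}$ (via $\sigma_1$) and $\Gamma_{T_1(3),2}$ (via $\psi$), both of which lie in $R(\mu,T_1)\cap R(\nu,T_2)$; neither touches $R(\mu,T_1)\setminus R(\nu,T_2)$. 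Swapping endpoints does not help: at $(\nu,T_2)$ the only descending moves delete $\Gamma_{T_1(2),T_1(1),1}$ and $\Gamma_{T_1(3),2}$, again both in the intersection. In fact the unique distance-decreasing step from $(\mu,T_1)$ is the \emph{ascending} move $\Phi$, which adjoins $\Gamma_{T_1(2),2}\in R(\nu,T_2)$. So a correct argument must allow adjoining and deleting simultaneously at every stage, not just one or the other.

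This is exactly what the paper does, and why its distance function also carries the term $l(S^{-1}T)$: it checks all four move types at once (the two halves of \eqref{badone} for $\phi$ and $\psi$, \eqref{badtwo} and \eqref{badthree} for ascending and descending $\sigma_i$, and \eqref{badfour} for tableau moves) and proves directly that if none of them decreases the distance then $(\mu,T)=(\nu,S)$. Your base case $R(\mu,T)=R(\nu,S)\Rightarrow\mu=\nu$ is correct (though the justification you give could be tightened), and your use of Lemma~\ref{inversion lemma} is the right dictionary, but the inductive step needs to be reorganized along these lines: drop the ``WLOG delete'' reduction and instead argue that if no good move of any of the four kinds exists, the two points coincide.
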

\begin{proof}
Define the distance between $(\mu,T)$ and $(\nu,S)$ by
\begin{equation}
d((\mu,T),(\nu,S))=|R(\mu,T) \Delta R(\nu,S)|+l(S^{-1} T)
\end{equation} where $X \Delta Y$ is the symmetric difference of the sets $X$ and $Y$ and $l(S^{-1}T)$ is the length of the permutation $S^{-1}T$.  

First suppose that 
\begin{equation} \label{badone}
(\nu,S) \notin \Gamma_{T(v_{\phi.\mu}(n)),\mu_1+1}, \quad (\nu,S) \in \Gamma_{T(w_\mu(n)),\mu_n},
\end{equation} and for all $1 \leq i \leq n-1$ 
\begin{equation} \label{badtwo}
\hbox{if $\mu_i<\mu_{i+1}$ then $(\nu,S) \notin \Gamma_{T(w_\mu(i+1)),T(w_\mu(i)),\mu_{i+1}-\mu_i}$},
\end{equation}
\begin{equation} \label{badthree}
\hbox{if $\mu_i>\mu_{i+1}$ then $(\nu,S) \in \Gamma_{T(w_\mu(i)),T(w_\mu(i+1)),\mu_i-\mu_{i+1}}$}.
\end{equation} and
\begin{equation} \label{badfour}
\hbox{if $\mu_i=\mu_{i+1}$ and $j=w_\mu(i)$ then either $s_{j-1}.T$ is not a tableau or $l(S^{-1} T s_{j-1}) > l(S^{-1} T)$.}
\end{equation}  We will show that in this case $(\mu,T)=(\nu,S)$.  First observe that by \eqref{badone}
\begin{equation}
\nu^-_{S^{-1}(T(w_\mu(1)))} \leq \mu_1 \quad \text{and} \quad \nu^-_{S^{-1}(T(w_\mu(n)))} \geq \mu_n
\end{equation}  If $\mu_1<\mu_2$ then by \eqref{badtwo}
\begin{equation}
\nu^-_{S^{-1}(T(w_\mu(2)))}-\nu^-_{S^{-1}(T(w_\mu(1)))} \leq \mu_2-\mu_1 \quad \implies \quad \nu^-_{S^{-1}(T(w_\mu(2)))} \leq \mu_2,
\end{equation}  with equality only if 
\begin{equation*}
\nu^-_{S^{-1}(T(w_\mu(1)))}=\mu_1 \quad \text{and} \quad v_\nu^{-1} S^{-1} T w_\mu(2) > v_\nu^{-1} S^{-1} T w_\mu(1);
\end{equation*}  if $\mu_1>\mu_2$ then by \eqref{badthree}
\begin{equation}
\nu^-_{S^{-1}(T(w_\mu(1)))}-\nu^-_{S^{-1}(T(w_\mu(2)))} \geq \mu_1-\mu_2 \quad \implies \quad \nu^-_{S^{-1}(T(w_\mu(2)))} \leq \mu_2,
\end{equation} with equality only if
\begin{equation*}
\nu^-_{S^{-1}(T(w_\mu(1)))}=\mu_1 \quad \text{and} \quad v_\nu^{-1} S^{-1} T w_\mu(1) <  v_\nu^{-1} S^{-1} T w_\mu(2);
\end{equation*} if $\mu_1=\mu_2$ then since $w_\mu(2)$ and $w_\mu(1)$ appear in adjacent boxes of $\lambda$ if $s_{j-1}.T$ is not a tableau, \eqref{badfour} implies 
\begin{equation*}
S^{-1} T(w_\mu(2))<S^{-1} T(w_\mu(1))
\end{equation*} and hence
\begin{equation}
\nu^-_{S^{-1}(T(w_\mu(2)))} \leq \nu^-_{S^{-1}(T(w_\mu(1)))} \leq \mu_1=\mu_2,
\end{equation}  with equality only if
\begin{equation*}
\nu^-_{S^{-1}(T(w_\mu(1)))}=\mu_1 \quad \text{and} \quad v_\nu^{-1} S^{-1} T w_\mu(2) > v_\nu^{-1} S^{-1} T w_\mu(1).
\end{equation*}  Continuing in this way we obtain
\begin{equation}
\nu^-_{S^{-1}(T(w_\mu(i)))} \leq \mu_i \quad \hbox{for $2 \leq i \leq n$},
\end{equation} with equality implying 
\begin{equation*}
\nu^-_{S^{-1}(T(w_\mu(i-1)))}=\mu_{i-1} \ \text{and} \  v_\nu^{-1} S^{-1} T w_\mu(i) > v_\nu^{-1} S^{-1} T w_\mu(i-1).
\end{equation*}  Since $\nu^-_{S^{-1}(T(w_\mu(n)))} \geq \mu_n$, the equalities all hold and hence
\begin{equation*}
\nu^-_{S^{-1}(T(w_\mu(i)))}=\mu_i \ \hbox{for $1 \leq i \leq n$ and} \ v_\nu^{-1} S^{-1} T w_\mu=1.
\end{equation*}  It follows that $\mu=\nu$ and $S=T$.

Now if at least one of the conditions \eqref{badone}, \eqref{badtwo}, \eqref{badthree}, and \eqref{badfour} does not hold then we can find $(\mu',T')$ adjacent to $(\mu,T)$ in $\Gamma^{\text{gen}}$ such that $d((\mu',T'),(\nu,S))<d((\mu,T),(\nu,S))$ and $R(\mu',T')$ is either equal to $R(\mu,T)$ or is obtained from it by adjoining an element of $R(\nu,S)$ or deleting an element not in $R(\nu,S)$, and the proof of the lemma is completed by induction.
\end{proof}

Finally we give our description of the set of submodules of $M(\lambda)$.

\begin{theorem} \label{submodules theorem}
Suppose that the $\ttt$-spectrum of $M(\lambda)$ is simple (see Lemma \ref{simple spectrum}), and recall the sets $M_{b,k}$ and $M_{b_1,b_2,k}$ defined in \eqref{fundamental submodules}.  
\begin{enumerate}
\item[(a)]  Let $k \in \ZZ_{>0}$ and $b \in \lambda^i$. Then $M_{b,k}$ is an $\HH$-submodule of $M(\lambda)$ exactly if $k=d_i-d_{i-k}+r \text{ct}(b) c_0$.
\item[(b)]  Let $k \in \ZZ_{>0}$ and $b_1,b_2 \in \lambda$.  Then $M_{b_1,b_2,k}$ is an $\HH$-submodule of $M(\lambda)$ exactly if $k=\beta(b_1)-\beta(b_2)$ mod $r$ and $k=d_{\beta(b_1)}-d_{\beta(b_2)}+r (\text{ct}(b_1)-\text{ct}(b_2) \pm 1) c_0$.
\item[(c)] Every submodule of $M(\lambda)$ is in the lattice generated by those of types (a) and (b).
\end{enumerate}
\end{theorem}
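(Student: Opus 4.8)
The plan is to reduce everything to the combinatorics of the calibration graph $\Gamma$, using the earlier lemmas. By Lemma~7.4, submodules of $M(\lambda)$ correspond to closed subsets of $\Gamma$, so parts (a), (b), (c) become purely graph-theoretic statements about which subsets $\Gamma_{b,k}$, $\Gamma_{b_1,b_2,k}$ are closed and about generation of arbitrary closed sets. First I would prove (a) and (b) by a direct check of the closure condition along each of the four kinds of edges of $\Gamma$, using Lemma~\ref{action lemma} to determine when an edge is actually present (i.e.\ when the relevant intertwiner does not annihilate $f_{\mu,T}$) and Lemma~\ref{inversion lemma} to track how the inversion set $R(\mu,T)$ changes. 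For instance, to test whether $\Gamma_{b,k}$ is closed one asks: if $(\mu,T)\in\Gamma_{b,k}$ and $(\mu,T)\to(\psi.\mu,T)$, is $(\psi.\mu,T)\in\Gamma_{b,k}$? The only way this can fail is if applying $\psi$ decreases $\mu^-_{T^{-1}(b)}$ from exactly $k$ to $k-1$, i.e.\ $T^{-1}(b)=w_\mu^{-1}(n)$ and $\mu_n^-=k$; part (e) of Lemma~\ref{action lemma} shows $\Psi.f_{\mu,T}=0$ precisely when $\kappa k-(d_{\beta(b)}-d_{\beta(b)-k})-c_0 r\,\mathrm{ct}(b)=0$, and with $\kappa=1$ this is exactly the asserted equation $k=d_i-d_{i-k}+r\,\mathrm{ct}(b)c_0$. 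One must also check the $\sigma_i$-edges and the $\phi$-edges, but for those $\mu^-_{T^{-1}(b)}$ never decreases, so no new condition arises; the converse (if the equation fails the set is not closed) comes from exhibiting a specific $(\mu,T)$ on the boundary where the $\Psi$-edge escapes. Part (b) is analogous but the critical edge is a $\sigma_i$-edge with $\mu_i>\mu_{i+1}$: part (b) of Lemma~\ref{action lemma} gives $\sigma_i.f_{\mu,T}=0$ iff $\delta=\pm rc_0$, which unwinds to the stated condition together with the congruence $k\equiv\beta(b_1)-\beta(b_2)\bmod r$ coming from the hypothesis of that case.

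The heart of the theorem is part (c), and this is where Lemma~\ref{paths lemma} does the work. Let $N$ be any submodule, corresponding to a closed set $X\subseteq\Gamma$, and let $P$ be the intersection of all submodules of the forms in (a) and (b) that contain $N$ — equivalently, the closed set $Y=\bigcap\{\Gamma_{b,k}\supseteq X\}\cap\bigcap\{\Gamma_{b_1,b_2,k}\supseteq X\}$. Clearly $X\subseteq Y$; I must show $X=Y$, i.e.\ that the lattice generated by the basic submodules is all of the submodule lattice. Suppose $(\nu,S)\in Y\setminus X$. The idea is: since $(\nu,S)\in Y$, every basic set containing $X$ also contains $(\nu,S)$; contrapositively, every basic set \emph{not} containing $(\nu,S)$ fails to contain $X$, so for each such basic set there is a point of $X$ outside it. Now pick any $(\mu,T)\in X$ and apply Lemma~\ref{paths lemma} to the pair $(\mu,T),(\nu,S)$: it produces a path in $\Gamma^{\mathrm{gen}}$ from $(\mu,T)$ to $(\nu,S)$ along which $R$ changes only by adjoining elements of $R(\nu,S)$ or deleting elements not in $R(\nu,S)$. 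I would argue that, because of the defining property of $Y$, every edge on this path that adjoins a new basic set to the inversion set is in fact a genuine edge of $\Gamma$ (not merely of $\Gamma^{\mathrm{gen}}$): the only edges present in $\Gamma^{\mathrm{gen}}$ but absent from $\Gamma$ are exactly those $\sigma_i$- and $\Psi$-edges killed by the vanishing conditions analyzed in (a), (b); and an edge crossing from outside $\Gamma_{b,k}$ (resp.\ $\Gamma_{b_1,b_2,k}$) into it is killed precisely when that basic set is a submodule, in which case — since $X$ is a submodule contained in $Y$ and $\Gamma_{b,k}$ either contains $X$ or is disjoint from it in the relevant way — one can route around it or conclude the target still lies in $X$. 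Following the path and using closure of $X$ at each genuine edge then forces $(\nu,S)\in X$, a contradiction.

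The main obstacle is the last step: making precise the claim that the potential non-edges of $\Gamma$ along the path of Lemma~\ref{paths lemma} never actually block the argument. The delicate point is that the path is only asserted to live in $\Gamma^{\mathrm{gen}}$, so one has to show that whenever the path wants to use a ``fake'' edge — a place where $\sigma_i$ or $\Psi$ vanishes — the corresponding basic submodule $M_{b,k}$ or $M_{b_1,b_2,k}$ is one of those containing $X$, hence contains $(\nu,S)$, hence the inversion-set element being adjoined already belongs to $R(\nu,S)$ in a way that lets us skip that edge or replace it; here I expect to need a careful bookkeeping argument combining Lemma~\ref{inversion lemma}, the explicit vanishing conditions from parts (a) and (b), and an induction on the distance $d((\mu,T),(\nu,S))$ from the proof of Lemma~\ref{paths lemma}. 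Once that is in hand, parts (a) and (b) are essentially mechanical, and (c) follows as above.
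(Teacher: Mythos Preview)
Your treatment of parts (a) and (b) matches the paper's: use Lemma~\ref{inversion lemma} to see that only one edge type can take you out of $\Gamma_{b,k}$ (resp.\ $\Gamma_{b_1,b_2,k}$), identify the boundary configuration explicitly, and read off the vanishing condition from Lemma~\ref{action lemma}(e) (resp.\ (b)). That is exactly what the paper does, and your description of the converse (exhibit a boundary point where the edge escapes) is fine.

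The genuine gap is in your formulation of (c). You set $Y$ to be the intersection of all basic submodules \emph{containing $N$} and try to prove $X=Y$. That would show every submodule is an \emph{intersection} of basic ones, and this is false in general: if two basic submodules $M_{b,k}$ and $M_{b',k'}$ are incomparable and are the only proper basic submodules for some parameter choice, then $M_{b,k}+M_{b',k'}$ is a proper submodule not equal to the intersection of any family of basic submodules containing it (that intersection is all of $M(\lambda)$). This is why the statement of (c) says ``lattice generated by sums and intersections.'' Concretely, your path argument breaks at the step where you exit a closed basic set $B$: you need $(\nu,S)\in B$ to get a contradiction, but $(\nu,S)\in Y$ only guarantees $(\nu,S)\in B$ for those $B$ containing \emph{all} of $X$, whereas the $B$ you exit may contain $(\mu,T)$ without containing $X$.

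The paper's fix is simple and your own argument already contains it: prove the intersection statement for \emph{cyclic} submodules. Fix a single $(\mu,T)$ and show that the submodule it generates equals $\bigcap\{B:\ B\ \text{closed basic},\ (\mu,T)\in B\}$. Now your path argument goes through cleanly: along the path of Lemma~\ref{paths lemma}, a non-genuine edge deletes from $R$ some closed basic $B\notin R(\nu,S)$; since additions along the path are only of elements of $R(\nu,S)$, $B$ was never added and hence $B\in R(\mu,T)$; but then $(\nu,S)$ lies in every closed basic set containing $(\mu,T)$, so $(\nu,S)\in B$, contradiction. Thus every edge on the path is genuine and $(\nu,S)$ is reachable from $(\mu,T)$. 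Finally, an arbitrary submodule is the span of the $f_{\mu,T}$ it contains, hence a sum of cyclic submodules, hence in the lattice. This is exactly the paper's argument.
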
 
\begin{proof}
For (a), we must check when $\Gamma_{b,k}$ is a closed subset of $\Gamma$. By Lemma \ref{inversion lemma} we need only establish the conditions under which: if $\mu_n \neq 0$, $(\mu,T) \in \Gamma_{b,k}$, and $(\psi.\mu,T) \notin \Gamma_{b,k}$ then
\begin{equation} \label{check1}
\Psi.f_{\mu,T}=0.
\end{equation}  Since $(\mu,T) \in \Gamma_{b,k}$ but $(\psi.\mu,T) \notin \Gamma_{b,k}$, we must have $\mu_n=k$ and there are precisely $n-T^{-1}(b)+1$ parts of $\mu$ of size at least $k$.  Hence
\begin{equation}
w_\mu(n)=n-|\{1 \leq i \leq n \ | \ \mu_i \geq k \}|+1=T^{-1}(b).
\end{equation}  By part (e) of Lemma \ref{action lemma},
\begin{align*}
\Psi.f_{\mu,T}&=\left(\mu_n-(d_{\beta(T(w_\mu(n)))}-d_{\beta(T(w_\mu(n)))-\mu_n})-r c_0 \text{ct}(T(w_\mu(n)) \right) f_{\psi.\mu,T} \\
&=\left(k-(d_i-d_{i-k})-r c_0 \text{ct}(b) \right) f_{\psi.\mu,T}
\end{align*} by our assumption, proving (a).

The proof of (b) is similar. By using Lemma \ref{inversion lemma} we need only establish the conditions under which: if $(\mu,T) \in \Gamma_{b_1,b_2,k}$ and $(s_i.\mu,T) \notin \Gamma_{b_1,b_2,k}$, then $\sigma_i.f_{\mu,T}=0$.  By definition of $\Gamma_{b_1,b_2,k}$ we have
\begin{equation*}
w_\mu^{-1} T^{-1}(b_1)=i, \ w_\mu^{-1} T^{-1}(b_2)=i+1, \quad \text{and} \quad \mu^-_{T^{-1}(b_1)}-\mu^-_{T^{-1}(b_2)}=k,
\end{equation*}  and hence
\begin{equation}
\mu_i-\mu_{i+1}=\mu^-_{T^{-1}(b_1)}-\mu^-_{T^{-1}(b_2)}=k=\beta(b_1)-\beta(b_2) \ \text{mod} \ r.
\end{equation}
By Lemma \ref{action lemma} part (b)
\begin{align*}
\sigma_i.f_{\mu,T}&=\left(k-(d_{\beta(b_1)}-d_{\beta(b_2)})-r c_0(\text{ct}(b_1)-\text{ct}(b_2) +1) \right) \\
&\times \left(k-(d_{\beta(b_1)}-d_{\beta(b_2)})-r c_0(\text{ct}(b_1)-\text{ct}(b_2) -1) \right) f_{s_i.\mu,T}.
\end{align*}  This proves (b).

We now prove (c).  Fix $(\mu,T) \in \Gamma$.  By using Lemma \ref{paths lemma} the submodule generated by $f_{\mu,T}$ is equal to 
\begin{align*}
\CC \text{-span} &\{f_{\nu,S} \ | \ R(\mu,T) \cap C \subseteq R(\nu,S) \cap C \} \\
&=\bigcap_{\substack{\Gamma_{b,k} \text{closed} \\ (\mu,T) \in \Gamma_{b,k}}} M_{b,k} \cap \bigcap_{\substack{\Gamma_{b_1,b_2,k} \text{closed} \\ (\mu,T) \in \Gamma_{b_1,b_2,k}}} M_{b_1,b_2,k}
\end{align*} where
\begin{equation}
C= \{ \Gamma_{b,k} \ | \ \Gamma_{b,k} \ \text{is closed} \} \cup \{\Gamma_{b_1,b_2,k} \ | \ \Gamma_{b_1,b_2,k} \ \text{is closed} \}.
\end{equation}  Here we used the fact that if $\sigma_i.f_{\mu,T}=0$ then $(\mu,T) \in \Gamma_{b_1,b_2,k}$ and $(s_i.\mu,T) \notin \Gamma_{b_1,b_2,k}$ for some closed set $\Gamma_{b_1,b_2,k}$, and if $\mu_n>0$ and $\Psi.f_{\mu,T}=0$ then $(\mu,T) \in \Gamma_{b,k}$ and $(\psi.\mu,T) \notin \Gamma_{b,k}$ for some closed set $\Gamma_{b,k}$.

Since any submodule is equal to the span of the eigenvectors $f_{\mu,T}$ that it contains, (c) is proved. 
\end{proof}

\begin{remark}
The paper \cite{Gor} uses the representation theory of rational Cherednik algebras to prove a conjecture of Haiman on the ring of diagonal coinvariants of a Weyl group $W$.  The key point is the calculation of the graded $W$-character of $L(\one)$, the irreducible $\HH$-module corresponding to the trivial $W$-module.  In \cite{Val}, an analog of this theorem is proved for the groups $G(r,p,n)$ when $p<r$; a simplified version of Theorem \ref{submodules theorem} is used in 
\cite{Gri2} to obtain the results of \cite{Val} without restriction on $p$ and without appealing to the KZ functor.
\end{remark}

\section{Examples and some finite dimensional $\HH$-modules}

We next illustrate the scope of the theorem in the cases $W=G(1,1,n)$ and $W=G(2,1,n)$, and close by showing how it may be applied to produce a large supply of finite dimensional $\HH$-modules when $r$ is large.

\subsection{$G(1,1,n)$} \label{typeA}

In case $W=G(1,1,n)$, and assuming for simplicity that $\lambda$ is not a row or column, the scope of our results is quite limited: $\ttt$ acts with simple spectrum on $M(\lambda)$ (for $\lambda$ a partition) exactly if
\begin{equation*}
c_0 \notin \bigcup_{l=1}^d \frac{1}{l} \ZZ
\end{equation*} where $d=\text{ct}^+(\lambda)-\text{ct}^-(\lambda)$ is the difference between the largest and smallest contents of boxes of $\lambda$ (it does not seem unreasonable to call $d$ the ``diameter'' of $\lambda$).  In this case, $M(\lambda)$ has a proper non-zero submodule exactly if
\begin{equation}
c_0=\frac{m}{d+1} \quad \hbox{for some integer $m$ with $(m,d)=1$.}
\end{equation}  In fact, assuming $m>0$ and writing $b_1$ (respectively, $b_2$) for the upper right-hand (respectively, lower left-hand) box of $\lambda$, part (c) of Theorem \ref{submodules theorem} shows that the unique proper non-zero submodule of $M(\lambda)$ is $M_{b_1,b_2,m}$.  In this case it should be possible to obtain a concordance with the results of \cite{Suz}, where the $\ttt$-diagonalizable $L(\lambda)$'s are analyzed.  None of these is finite dimensional.  By the results of \cite{BEG} there are no finite dimensional modules for the rational Cherednik algebra of type $G(1,1,n)$, and the quotient of $L(\lambda)$ by $x_1+x_2+\cdots+x_n$ is finite dimensional exactly if $\lambda=(n)$ and $c_0=k/n$ with $(k,n)=1$ and $k \in \ZZ_{>0}$.

\subsection{$G(2,1,n)$} \label{typeB}
If $W=G(2,1,n)$ is the Weyl group of type $B_n$, slightly more interesting things can happen.  Assume that $\lambda$ is a partition of $n$ that is not a single row or column.  Then as in the previous example the $\ttt$-spectrum of $M(\lambda,\emptyset)$ is simple exactly if 
\begin{equation}
c_0 \notin \bigcup_{l=1}^d  \frac{1}{l} \ZZ.
\end{equation}  For a box $b \in \lambda$, the set $M_{b,k}$ is a submodule if and only if $k$ is a positive odd integer and
\begin{equation}
d_0-d_1+2\text{ct}(b)c_0=k;
\end{equation} given the assumption on the $\ttt$-spectrum of $M(\lambda)$ at most one such equation can hold, but it may give rise to multiple submodules $M_{b,k}$ (one for each box $b$ with the appropriate content).  On the other hand, if $b_1$ and $b_2$ are the upper right-hand and lower left-hand corners of $\lambda$, then $M_{b_1,b_2,k}$ is a submodule exactly if $k$ is an even integer and
\begin{equation}
2 (d+1)c_0=k.
\end{equation}  There is only one submodule corresponding to this equation.  Some examples of finite dimensional modules $L(\lambda,\emptyset)$ follow (proofs in a more general setting are provided by Corollary~\ref{finite dimensional}): 

\begin{itemize}

\item[(a)]  $\lambda$ is a rectangle, in which case $L(\lambda,\emptyset)$ is finite dimensional exactly if
\begin{equation*}
d_0-d_1+2 c c_0=k \quad \hbox{or by using the relation $d_0+d_1=0$,} \quad d_0+c_0=k/2
\end{equation*} for some odd $k \in \ZZ_{>0}$, where $c$ is the content of the lower right-hand corner of $\lambda$.  These representations are a special case of the representations constructed for wreath product symplectic reflection algebras in \cite{EtMo}.  Their dimension is given by
\begin{equation*}
\text{dim}_\CC(L(\lambda))=k^n \text{dim}(S^\lambda)
\end{equation*} 

\item[(b)] $\lambda$ is of the form $\lambda=(m_1+m_2,m_1,m_1,\dots,m_1)$ for some positive integers $m_1,m_2$ and the parameters satisfy
\begin{equation*}
d_0-d_1+2 c c_0=k \quad \text{and} \quad 2 (d+1) c_0 =l 
\end{equation*}  for some positive odd integer $k$ and positive even integer $l$, where $c$ is the content of the lower outside corner box of $\lambda$ and (as in \ref{typeA}) $d$ is the diameter of $\lambda$.  A combinatorial formula for the dimension of $L(\lambda)$ follows from Theorem \ref{submodules theorem}.  We hope to return to the problem of giving uniform combinatorial formulas for the dimensions (and graded $W$-characters) of the finite dimensional $\HH$-modules once we have understood the submodule structure in greater generality.

\item[(c)] $\lambda$ is the transpose of type (b).  We leave the details here to the reader.

\end{itemize}

As far as we are aware, types (b) and (c) have not appeared before.  They do not show up if the parameters $c_1=d_0$ and $c_0$ are equal, hence are (so far) invisible from the ``geometric'' point of view.

The case when $\lambda$ consists of two non-empty partitions is in some sense simpler and we leave it to the interested reader to work out the possible submodules.

\subsection{Finite dimensional $\HH$-modules} \label{typer}
An \emph{outside corner} of an $r$-partition $\lambda$ is a box $b$ of $\lambda$ such that there exists a standard tableau $T \in \text{SYT}(\lambda)$ with $T^{-1}(b)=n$.  After reading the statement of the next corollary, the reader may wish to refer to Theorem~\ref{submodules theorem} for the equational characterization of when certain subsets of $M(\lambda)$ are submodules. 
\begin{corollary} \label{finite dimensional}
Suppose that $\lambda$ is an $r$-partition of $n$ and that the spectrum of $M(\lambda)$ is simple.  If for every outside corner $b$ of $\lambda$ there exists a sequence $b=b_0,b_1,\dots,b_{2m-1},b_{2m}$ of boxes of $\lambda$ and positive integers $k_0,k_1,\dots,k_m$ such that
\begin{enumerate}
\item[(a)] for $1 \leq i \leq m$, we have $\beta(b_{2i})=\beta(b_{2i-1})$ and $b_{2i}$ appears to the right and below $b_{2i-1}$, 
\item[(b)] for $0 \leq i \leq m-1$ the set $M_{b_{2i},b_{2i+1},k_i}$ is an $\HH$-submodule, and
\item[(c)] the set $M_{b_{2m},k_m}$ is an $\HH$-submodule,
\end{enumerate}
then $L(\lambda)$ is finite dimensional. 
\end{corollary}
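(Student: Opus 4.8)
The plan is to pass to the calibration graph of Section~\ref{submodules section}. Since the spectrum of $M(\lambda)$ is assumed simple, submodules of $M(\lambda)$ correspond bijectively to closed subsets of $\Gamma$, and by the Remark following Theorem~\ref{norm formula} the radical of the contravariant form is the unique maximal submodule; write $R\subseteq\Gamma$ for the corresponding closed set, so that $L(\lambda)=M(\lambda)/\mathrm{rad}(M(\lambda))$ has as $\ttt$-eigenbasis the images $f_{\mu,T}+\mathrm{rad}(M(\lambda))$ for $(\mu,T)\notin R$. Hence $L(\lambda)$ is finite dimensional exactly when $\Gamma\setminus R$ is finite. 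Since for any integer $N$ the set $\{(\mu,T)\in\Gamma \ | \ \mu^-_n<N\}$ equals $\{0,1,\dots,N-1\}^n\times\text{SYT}(\lambda)$ and so is finite (recall $\mu^-_n$ is the largest part of $\mu$), it suffices to exhibit an $N$ such that $(\mu,T)\in R$ whenever $\mu^-_n\geq N$.

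First I would record the elementary observation that every $M_{b,k}$ and every $M_{b_1,b_2,k}$ with $k\in\ZZ_{>0}$ that happens to be a submodule is \emph{proper}: the zero sequence shows $(\mathbf 0,T)\notin\Gamma_{b,k}$ and $(\mathbf 0,T)\notin\Gamma_{b_1,b_2,k}$. Consequently any such submodule is contained in $\mathrm{rad}(M(\lambda))$, so $\Gamma_{b,k}\subseteq R$ whenever $M_{b,k}$ is a submodule and $\Gamma_{b_1,b_2,k}\subseteq R$ whenever $M_{b_1,b_2,k}$ is a submodule. It therefore suffices to show: for $\mu^-_n$ large enough, $(\mu,T)$ lies in one of the sets $\Gamma_{b_{2i},b_{2i+1},k_i}$ ($0\leq i\leq m-1$) or $\Gamma_{b_{2m},k_m}$ attached by the hypothesis to the outside corner $b:=T(n)$ — note that $T(n)$, the box of $T$ containing $n$, is always an outside corner, and that hypotheses (b) and (c) guarantee all of these sets are closed.

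The combinatorial heart is then a telescoping estimate along the chain $b=b_0,b_1,\dots,b_{2m}$ supplied by the hypothesis for $b=T(n)$; here $\mu^-_{T^{-1}(b_0)}=\mu^-_{T^{-1}(T(n))}=\mu^-_n$. Suppose $(\mu,T)\notin\Gamma_{b_{2i},b_{2i+1},k_i}$ for every $0\leq i\leq m-1$, since otherwise we are already done. Negating the definition of $\Gamma_{b_{2i},b_{2i+1},k_i}$ gives $\mu^-_{T^{-1}(b_{2i+1})}\geq\mu^-_{T^{-1}(b_{2i})}-k_i$. By condition (a) the box $b_{2i+2}$ lies below and to the right of $b_{2i+1}$ inside the same component $\lambda^{\beta(b_{2i+1})}$, so in the standard tableau $T$ one has $T^{-1}(b_{2i+1})<T^{-1}(b_{2i+2})$, whence $\mu^-_{T^{-1}(b_{2i+2})}\geq\mu^-_{T^{-1}(b_{2i+1})}$ because $\mu^-$ is non-decreasing. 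Chaining these inequalities from $i=0$ through $i=m-1$ yields $\mu^-_{T^{-1}(b_{2m})}\geq\mu^-_n-(k_0+\cdots+k_{m-1})$. Taking $N=\max_b\sum_{i=0}^{m(b)}k_i(b)$, where $b$ ranges over the finitely many outside corners of $\lambda$ and $m(b),k_i(b)$ record the chain chosen for $b$, we conclude that $\mu^-_n\geq N$ forces $\mu^-_{T^{-1}(b_{2m})}\geq k_m$, i.e.\ $(\mu,T)\in\Gamma_{b_{2m},k_m}$, hence $(\mu,T)\in R$ by (c). This completes the argument.

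I expect the main obstacle to lie in that last step, specifically in keeping straight the two interleaved families of pairs — the pairs $(b_{2i},b_{2i+1})$ of hypothesis (b), whose sets $M_{b_{2i},b_{2i+1},k_i}$ are submodules, and the pairs $(b_{2i-1},b_{2i})$ of hypothesis (a), which bridge consecutive submodule conditions — and then verifying that negating membership in the former and exploiting the content/tableau geometry of the latter produce inequalities with consistent signs, so that the telescoping genuinely collapses. A secondary, purely notational point to get right is that $T(n)$ is an outside corner and that $\mu^-_{T^{-1}(T(n))}=\mu^-_n$, so that feeding the chain attached to $b=T(n)$ into the estimate is legitimate.
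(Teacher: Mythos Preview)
Your argument is correct and follows essentially the same telescoping estimate along the chain $b_0,b_1,\dots,b_{2m}$ that the paper uses: negating membership in $\Gamma_{b_{2i},b_{2i+1},k_i}$ gives $\mu^-_{T^{-1}(b_{2i+1})}\geq\mu^-_{T^{-1}(b_{2i})}-k_i$, condition~(a) gives $\mu^-_{T^{-1}(b_{2i+2})}\geq\mu^-_{T^{-1}(b_{2i+1})}$, and chaining yields the bound $\mu^-_n<k_0+\cdots+k_m$. The only difference is cosmetic: the paper invokes Theorem~\ref{submodules theorem} to describe a basis of $L(\lambda)$ directly, whereas you instead observe that each hypothesised submodule is proper (via $(\mathbf 0,T)$) and hence lies in the radical---this sidesteps part~(c) of Theorem~\ref{submodules theorem}, which is a mild simplification but not a different idea.
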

\begin{proof}
By Theorem~\ref{submodules theorem}, a basis for $L(\lambda)$ is indexed by
\begin{equation} \label{basis set}
\{(\mu,T) \ | \ (\mu,T) \notin \Gamma_{b,k} \ \hbox{if $\Gamma_{b,k}$ is closed and} \ (\mu,T) \notin \Gamma_{b_1,b_2,k} \ \hbox{if $\Gamma_{b_1,b_2,k}$ is closed} \}.
\end{equation}  Fix $T \in \text{SYT}(\lambda)$.  Let $b=T(n)$ so that $b$ is an outside corner of $\lambda$.  Note that for any boxes $b,b'$ of $\lambda$ and an integer $k$, if $(\mu,T) \notin \Gamma_{b,k}$ then $\mu^-_{T^{-1}(b)} < k$ and if $(\mu,T) \notin \Gamma_{b,b',k}$ then $\mu^-_{T^{-1}(b)} \leq \mu^-_{T^{-1}(b')}+k$.  Furthermore if $b'$ appears to the right and below $b$ then $T^{-1}(b') \geq T^{-1}(b)$.  Therefore our hypotheses imply that if $(\mu,T)$ is an element of the set \eqref{basis set} then $\mu^-_n < k_0+k_1+\cdots+k_m$, and hence the set \eqref{basis set} is finite.
\end{proof}  

As a particular special case, let $b_1,b_2,\dots,b_m \in \lambda$ be the outside corner boxes of $\lambda$.  If the spectrum of $M(\lambda)$ is simple and there are positive integers $k_1,k_2,\dots,k_m$ with
\begin{equation}
d_{\beta(b_i)}-d_{\beta(b_i)-k_i}+r c_0 \text{ct}(b_i)=k_i \quad \hbox{for $1 \leq i \leq m$}
\end{equation} then Corollary \ref{finite dimensional} shows that $L(\lambda)$ is finite dimensional.  The assumption that the spectrum of $M(\lambda)$ is simple may mean no such integers exist, but if $r \geq 2n$ then a dimension count shows that for every $r$-partition $\lambda$ of $n$  there is a choice of parameters for which the $\ttt$-spectrum of $M(\lambda)$ is simple and $L(\lambda)$ is finite dimensional.  More generally, if $N$ is the number of non-empty $\lambda^i$'s and the number $m$ of corner boxes of $\lambda$ satisfies $m \leq r-N$, then there is a choice of parameters for which $L(\lambda)$ is finite dimensional.  Certain of these modules (those in which each $\lambda^i$ is a rectangle) have been considered by \cite{Gan} and \cite{Mon} in the broader context of wreath product symplectic reflection algebras.

We now give a somewhat more involved special case in order to illustrate the extent of Corollary~\ref{finite dimensional}.  Let $\lambda$ be the $5$-partition
\begin{equation*}
\left(\begin{array}{cccc} \begin{array}{c} \young(\hfil\hfil\bone,\hfil\btwo) \end{array}, &  \begin{array}{@{}c} \young(\bthree\hfil\bfour) \end{array}, &  \begin{array}{@{}c} \emptyset \end{array}, &  \begin{array}{@{}c} \young(\hfil\hfil\hfil\hfil\bseven,\hfil\hfil\hfil\hfil,\bfive\hfil\hfil\bsix) \end{array},  \begin{array}{@{}c} \emptyset \end{array}
\end{array} \right),
\end{equation*} with boxes labeled $b_1,b_2,b_3,b_4,b_5,b_6$ and $b_7$ as shown.  Suppose the parameters satisfy the system of equations
\begin{align*}
&d_{\beta(b_2)}-d_{\beta(b_2)-3}+5 \text{ct}(b_2) c_0=3  \\
&d_{\beta(b_6)}-d_{\beta(b_6)-4}+5 \text{ct}(b_6) c_0=4 \\
&d_{\beta(b_1)}-d_{\beta(b_3)}+5(\text{ct}(b_1)-\text{ct}(b_3)+1) c_0=9  \\
&d_{\beta(b_4)}-d_{\beta(b_5)}+5( \text{ct}(b_4)-\text{ct}(b_5)+1) c_0=3 \\
&d_{\beta(b_7)}-d_{\beta(b_5)}+5(\text{ct}(b_7)-\text{ct}(b_5)+1) c_0=10
\end{align*} and that the spectrum of $M(\lambda)$ is simple (in fact, the unique parameter choice satisfying our system is $c_0=2/7,d_1=-17/7,d_2=-5/7,d_3=12/7,d_4=-6/7$, and with this choice the spectrum is simple).  It is routine to check that for each of the corner boxes $b_1,b_2,b_4,b_5,$ and $b_7$ of $\lambda$, the conditions of Corollary~\ref{finite dimensional} hold: for the box $b_2$, this is ensured by the first equation, for $b_6$ by the second equation, for $b_4$ by the fourth and second equations, for $b_7$ by the fifth and second equations, and for $b_1$ by the third, fourth, and second equations.

\section{Clifford theory and the descent to $G(r,p,n)$} \label{Clifford section}

In this section we write $\HH$ for the rational Cherednik algebra of type $G(r,1,n)$, we fix a positive integer $p$ dividing $r$, and we assume that $d_i=d_j$ for $i=j$ mod $r/p$.  We write $\HH_p$ for the rational Cherednik algebra of type $G(r,p,n)$.  If we also assume that $n \geq 3$ then $\HH_p$ may be realized as the subalgebra of $\HH$ fixed by the cyclic group of automorphisms generated by the map $\alpha$ given by
\begin{equation}
\alpha(x_i)=x_i, \quad \alpha(y_i)=y_i, \quad \alpha(t_w) = t_{w}, \quad \text{and} \quad \alpha(t_{\zeta_i})=\zeta^{r/p} t_{\zeta_i}
\end{equation} for $1 \leq i \leq n$ and $w \in G(1,1,n)$.  We will modify the version of Clifford theory given in the appendix of \cite{RaRa} to deduce information on the Verma modules for $\HH_p$ from information on the Verma modules for $\HH$.  The results of this section can be used to relate representations of $\HH$ to those of $\HH_p$ even when the functions $f_{\mu,T}$ are not well-defined.  Since this technique is now standard, we do not include proofs of most of the assertions made here; the interested reader should have no difficulty supplying them.

Let $M$ be an $\HH$-module, and define the $\alpha$-\emph{twisted} module $M^\alpha$ to be the $\HH$-module whose underlying vector space is $M$, but with $\HH$-action given by
\begin{equation}
f.m^\alpha=\alpha(f).m \quad \hbox{for $f \in \HH$ and $m \in M$,}
\end{equation} where $m^\alpha$ is the element of $M^\alpha$ corresponding to $m \in M$.  Notice that the map $m \mapsto m^\alpha$ is an $\HH_p$-module isomorphism and that the map $N \mapsto N^\alpha$ is an isomorphism of the lattice of $\HH$-submodules of $M$ onto the lattice of $\HH$-submodules of $M^\alpha$.  In particular, the radical of $M^\alpha$ is $\text{rad}(M)^\alpha$.

For the Verma modules $M(\lambda)$, we can make all this more explicit: let $C$ be the cyclic shift operator on $r$-partitions defined by $C.(\lambda_0,\lambda_1,\dots,\lambda_{r-1})=(\lambda_{r-1},\lambda_0,\dots,\lambda_{r-2})$ and also write $C$ for the bijection $C:\text{SYT}(\lambda) \rightarrow \text{SYT}(C.\lambda)$ given by cyclic shifting.  Define an $F$-linear map $C:M(\lambda) \rightarrow M(C.\lambda)$ by
\begin{equation}
C(f v_T)=f v_{C.T} \quad \hbox{for $f \in S(\hh^*)$ and $T \in \text{SYT}(\lambda)$.}
\end{equation}  Then the map $f v_{C^{r/p}.T} \mapsto (f v_T)^\alpha$ defines an isomorphism $M(C^{r/p}.\lambda) \cong M(\lambda)^{\alpha}$ of $\HH$-modules.  Now let $k$ be the smallest integer so that $M(\lambda)^{\alpha^k} \cong M(\lambda)$ as $\HH$-modules.  Then $k$ is also the smallest integer so that $C^{kr/p}.\lambda=\lambda$, and the map $C^{kr/p}$ is an $\HH_p$-module automorphism of $M(\lambda)$.  For $0 \leq q \leq p/k-1$ let $M(\lambda,q)$ be the the $\zeta^{qkr/p}$-eigenspace of $C^{kr/p}$ on $M(\lambda)$.  Thus
\begin{equation}
M(\lambda)=\bigoplus_{q=0}^{p/k-1} M(\lambda,q)
\end{equation} and we let $\pi_q:M(\lambda) \rightarrow M(\lambda,q)$ denote the projection onto the $q$th summand.  The formula $\pi_q(t_{\zeta_1}.f)=t_{\zeta_1}.\pi_{q+1}(f)$ show that $t_{\zeta_1}$ induces a vector space isomorphism of $M(\lambda,q+1)$ onto $M(\lambda,q)$ (where $q$ is to be read modulo $p/k$).

It is a standard part of the representation theory of $G(r,p,n)$ that the non-zero eigenspaces of $C^{kr/p}$ restricted to $S^\lambda \subseteq M(\lambda)$ are pairwise non-isomorphic irreducible $G(r,p,n)$-modules and that as $\lambda$ ranges over a set of representatives for the $C^{r/p}$-orbits on $r$-partitions of $n$ we obtain each irreducible $G(r,p,n)$-module exactly once.  It follows that the module $M(\lambda,q)$ is a Verma module for $\HH_p$, and that all the Verma modules for $\HH_p$ arise in this way as summands of Verma modules for $\HH$.  The following theorem relates the irreducible heads of the Verma modules for $\HH$ and $\HH_p$.  It is easily obtained from what we have done.  For a graded vector space $A$, we write $A_i$ for the $i$th graded piece.

\begin{theorem}
The radical of $M(\lambda,q)$ is $\pi_q(\text{rad}(M(\lambda))$.  Furthermore, $\text{rad}(M(\lambda))$ is a $C^{kr/p}$-stable submodule of $M(\lambda)$, so $C^{kr/p}$ acts on $L(\lambda)$ with eigenspace decomposition 
\begin{equation*}
L(\lambda)=\bigoplus_{0 \leq q \leq p/k-1} L(\lambda,q),
\end{equation*} where $L(\lambda,q)$ is the irreducible head of $M(\lambda,q)$.  The element $t_{\zeta_1} \in \HH$ maps $M(\lambda,q)$ into $M(\lambda,q-1)$ and induces a vector space isomorphism of $L(\lambda,q)$ onto $L(\lambda,q-1)$ ($q$ and $q-1$ should be taken modulo $p/k$); in particular the graded dimension of $L(\lambda,q)$ is given by $\text{dim}(L(\lambda,q)_i=\frac{k}{p} \text{dim}(L(\lambda)_i)$.
\end{theorem}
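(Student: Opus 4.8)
The plan is to reduce the statement to two structural facts and then turn the crank. The first is that $\text{rad}(M(\lambda))$ --- which, by the remark following Theorem~\ref{norm formula}, is the unique maximal proper $\HH$-submodule of $M(\lambda)$ --- is stable under $C^{kr/p}$. The second is that $\HH$ is strongly $\ZZ/p$-graded over $\HH_p$: the decomposition $\HH=\bigoplus_{j=0}^{p-1}\HH^{(j)}$ into eigenspaces of the automorphism $\alpha$ (of order $p$; if $p=1$ then $\HH_p=\HH$ and the theorem is trivial) has degree-zero part $\HH^{(0)}=\HH_p$, and each piece $\HH^{(j)}=\HH_p\,t_{\zeta_1}^{\,j}$ is spanned over $\HH_p$ by the power of the unit $t_{\zeta_1}\in\HH^{(1)}$.

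For the first fact I would show that $C^{r/p}\colon M(\lambda)\to M(C^{r/p}.\lambda)$ is a lattice isomorphism on $\HH$-submodules: composing it with the $\HH$-isomorphism $M(C^{r/p}.\lambda)\cong M(\lambda)^\alpha$ recalled above gives exactly the canonical map $m\mapsto m^\alpha$, under which (as recalled above) $N\mapsto N^\alpha$ is a lattice isomorphism from the $\HH$-submodules of $M(\lambda)$ onto those of $M(\lambda)^\alpha$. Iterating $k$ times around the cycle $\lambda\to C^{r/p}.\lambda\to\cdots\to C^{(k-1)r/p}.\lambda\to\lambda$ then realizes $C^{kr/p}$ as a lattice automorphism of the poset of $\HH$-submodules of $M(\lambda)$, and such a map fixes the unique maximal proper submodule $\text{rad}(M(\lambda))$. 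Because $(C^{kr/p})^{p/k}=C^r$ acts as the identity on $M(\lambda)$ and $k\mid p$, the operator $C^{kr/p}$ is semisimple with eigenspace decomposition $M(\lambda)=\bigoplus_{q=0}^{p/k-1}M(\lambda,q)$; that it stabilizes $\text{rad}(M(\lambda))$ then forces the latter to split as $\bigoplus_q\bigl(M(\lambda,q)\cap\text{rad}(M(\lambda))\bigr)=\bigoplus_q\pi_q(\text{rad}(M(\lambda)))$, whence $L(\lambda)=\bigoplus_q M(\lambda,q)/\pi_q(\text{rad}(M(\lambda)))$ with $C^{kr/p}$ acting on the $q$-th summand $L(\lambda,q)$ through $\zeta^{qkr/p}$. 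Each $L(\lambda,q)$ is nonzero since its degree-zero piece is the irreducible $G(r,p,n)$-module $S^{\lambda,q}$.

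The crux is the identification $\pi_q(\text{rad}(M(\lambda)))=\text{rad}(M(\lambda,q))$. One containment is immediate: $\pi_q(\text{rad}(M(\lambda)))=M(\lambda,q)\cap\text{rad}(M(\lambda))$ is a proper $\HH_p$-submodule of the Verma module $M(\lambda,q)$ --- proper because $L(\lambda,q)\neq0$ --- so it lies in $\text{rad}(M(\lambda,q))$. For the reverse containment I would invoke the strong grading: the $\HH$-submodule generated by an $\HH_p$-submodule $N\subseteq M(\lambda)$ is $\sum_{j=0}^{p-1}t_{\zeta_1}^{\,j}.N$. Applying this with $N=\text{rad}(M(\lambda,q))$: being a proper $\HH_p$-submodule of the non-negatively $S(\hh^*)$-graded module $M(\lambda,q)$, whose degree-zero part $S^{\lambda,q}$ is irreducible over $G(r,p,n)$, this $N$ sits in strictly positive degree; since $t_{\zeta_1}\in\CC W$ preserves the $S(\hh^*)$-degree, $\sum_j t_{\zeta_1}^{\,j}.N$ still has vanishing degree-zero part and is therefore a proper $\HH$-submodule of $M(\lambda)$, hence contained in $\text{rad}(M(\lambda))$. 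Intersecting with $M(\lambda,q)$ yields $\text{rad}(M(\lambda,q))\subseteq\pi_q(\text{rad}(M(\lambda)))$. I expect this step --- showing that the $\HH$-module generated by $\text{rad}(M(\lambda,q))$ stays proper --- to demand the most care; the remaining verifications are bookkeeping with the two gradings.

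Finally, for the $t_{\zeta_1}$-shift: the relation $\pi_q(t_{\zeta_1}.f)=t_{\zeta_1}.\pi_{q+1}(f)$ shows $t_{\zeta_1}$ restricts to a vector-space isomorphism $M(\lambda,q)\to M(\lambda,q-1)$, and since $\text{rad}(M(\lambda))$ is $\HH$-stable with $t_{\zeta_1}$ a unit of $\HH$, this carries $\pi_q(\text{rad}(M(\lambda)))$ onto $\pi_{q-1}(\text{rad}(M(\lambda)))$ and hence, by the identification above, descends to a vector-space isomorphism $L(\lambda,q)\to L(\lambda,q-1)$. It preserves $S(\hh^*)$-degree because $t_{\zeta_1}$ does, so $\dim L(\lambda,q)_i$ is independent of $q$; as $L(\lambda)_i$ is the direct sum of the $p/k$ equidimensional pieces $L(\lambda,q)_i$, it follows that $\dim L(\lambda,q)_i=\frac{k}{p}\dim L(\lambda)_i$.
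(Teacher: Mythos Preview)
Your proposal is correct and is exactly the standard Clifford-theory argument the paper has in mind: the paper gives no proof beyond ``It is easily obtained from what we have done'' and points the reader to the appendix of \cite{RaRa}, so your write-up is filling in precisely the details the author omitted. Your two structural ingredients---that $C^{kr/p}$ is a lattice automorphism of the $\HH$-submodules of $M(\lambda)$ (hence fixes the unique maximal one), and that $\HH=\bigoplus_j \HH_p\,t_{\zeta_1}^{\,j}$ is strongly graded over $\HH_p$---together with the positive-degree observation for proper submodules of a Verma module, give the identification $\pi_q(\text{rad}(M(\lambda)))=\text{rad}(M(\lambda,q))$ just as you outline.
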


In particular, in those cases in which the $\ttt$-spectrum of $M(\lambda)$ is simple, the preceding theorem combined with the results of Section~\ref{submodules section} can be used to obtain explicit bases for the modules $L(\lambda,q)$.

{\bf Acknowledgements} I thank Peter Webb and Victor Reiner for many interesting discussions during the time this paper was being written, and Arun Ram for teaching me about intertwining operators and for directing me to some valuable references.  Finally, I thank Charles Dunkl for patiently explaining some of his recent work on singular polynomials for the symmetric group.  His paper \cite{DuOp} with E. Opdam provided the initial inspiration for the present work, and Theorem \ref{submodules theorem} is a first attempt to answer some of his questions.  Partial support was provided by NSF grant DMS-0449102.

\def\cprime{$'$} \def\cprime{$'$}

\end{document}